\definecolor{lightgrey}{rgb}{.804,.804,.756}
\numberwithin{equation}{section}
\numberwithin{table}{section}
\numberwithin{figure}{section}
\newcommand{\Z}{{\mathbb Z}}
\newcommand{\R}{{\mathbb R}}
\newcommand{\C}{{\mathcal C}}
\renewcommand{\O}{{\mathcal O}}
\newcommand{\Ret}{\operatorname{Ret}}
\newcommand{\Fun}{\operatorname{Fun}}
\newcommand{\End}{\operatorname{End}}
\newcommand{\Ext}{\mathcal{E}}
\newcommand{\Id}{\operatorname{Id}}
\newcommand{\im}{\operatorname{Im}}
\renewcommand{\ker}{\operatorname{Ker}}
\newcommand{\extension}[4]{(#1\xrightarrow{#2}#3,\, #4)}
\newcommand\op{\mathrel{\triangleleft}}
\newcommand\wop{\mathrel{\widetilde{\triangleleft}}}
\newcommand\ops{\mathrel{\triangleleft_\sigma}}
\newcommand\action{\mathrel{\leftthreetimes}}
\newcommand\oaction{\mathrel{\overline{\leftthreetimes}}}
\newcommand\wdot{\mathrel{\tilde{\cdot}}}
\newcommand\oX{\overline{X}}
\newcommand\oJ{\overline{J}}
\newcommand\osigma{\overline{\sigma}}
\newcommand\rcircled[1]{$\,$\tikz[baseline=(char.base)]{
            \node[shape=circle,draw,inner sep=1pt] (char) {#1};}}
\newcommand{\mapsfrom}{\mathrel{\reflectbox{\ensuremath{\mapsto}}}}
\newcommand\Degen{{\scriptstyle\mathrm{D}}}
\newcommand\Norm{{\scriptstyle\mathrm{N}}}
\newcommand\Braided{{\scriptstyle\mathrm{B}}}
\newcommand\Group{{\scriptstyle\mathrm{G}}}
\definecolor{choco}{rgb}{.525,.27,.075}
\definecolor{mygreen}{rgb}{0,.455,0} 
\definecolor{myviolet}{rgb}{.45,.05,.545}
\definecolor{myred}{rgb}{.545,0,0}
\definecolor{myblue}{rgb}{.024,.15,.645}           
\theoremstyle{plain}
\newtheorem{thm}{Theorem}[section]
\newtheorem{que}[thm]{Question}
\newtheorem{pro}[thm]{Proposition}
\newtheorem{lem}[thm]{Lemma}
\newtheorem{cor}[thm]{Corollary}
\theoremstyle{definition}
\newtheorem{defn}[thm]{Definition}
\newtheorem{rem}[thm]{Remark}
\newtheorem{example}[thm]{Example}
\newtheorem{notation}[thm]{Notation}
\begin{document}
\title[Homology of the Yang--Baxter equation]{Homology of left non-degenerate set-theoretic solutions to the Yang--Baxter equation}

\author{Victoria Lebed}
\address{Laboratoire de Math\'ematiques Jean Leray, Universit\'e de Nantes, 2 rue de la Houssini\`ere, BP 92208 F-44322 Nantes Cedex 3, France}
\email{lebed.victoria@gmail.com}

\author{Leandro Vendramin}
\address{Depto. de Matem\'atica, FCEN,
Universidad de Buenos Aires, Pabell\'on I, Ciudad Universitaria (1428)
Buenos Aires, Argentina}
\email{lvendramin@dm.uba.ar}

\thanks{The work of L.V. was partially supported by CONICET, PICT-2014-1376, and 
ICTP. V.L. is grateful to Henri Lebesgue Center (University of Nantes), for
warm and stimulating working atmosphere as well as for financial support via
the program ANR-11-LABX-0020-01. The authors thank Arnaud Mortier for valuable
comments on the early versions of this manuscript and for graphical assistance; Travis Schedler for the list of small involutive solutions; Patrick Dehornoy, Friedrich Wagemann, and Simon Covez for fruitful discussions; and the referee for valuable remarks allowing to put this work into better perspective, and for suggesting the methods of~\cite{MR1812049} for proving Proposition~\ref{PR:betti}.}

\keywords{Yang--Baxter equation, shelf, rack, quandle, birack, cycle set,
braided homology, extension, cubical homology}

\subjclass[2010]{16T25, 20N02, 55N35, 57M27.}
\date{\today}

\begin{abstract}
    This paper deals with left non-degenerate set-theoretic solutions to the
    Yang--Baxter equation (=LND solutions), a vast class of algebraic structures
    encompassing groups, racks, and cycle sets. To each such solution is associated a shelf
    (i.e., a self-distributive structure) which captures its major properties. 
    We consider two (co)homology theories for LND solutions, one of which was
    previously known, in a reduced form, for biracks only. An explicit isomorphism between these theories is described. For groups and racks we recover their classical (co)homology, whereas for cycle sets we get new constructions. For a certain type of LND 
    solutions, including quandles and non-degenerate cycle sets, the
    (co)homologies split into the degenerate and the normalized parts. We
    express $2$-cocycles of our theories in terms of group cohomology, and, in
    the case of cycle sets, establish connexions with extensions.
    This leads to a construction of cycle sets with interesting properties. 
\end{abstract}

\maketitle

\setcounter{tocdepth}{1}
\tableofcontents{}
	
\section{Introduction}

The \textit{Yang--Baxter equation} (=\textit{YBE}) plays a fundamental role in
such apparently distant fields as statistical mechanics, particle physics,
quantum field theory, quantum group theory, and low-dimensional topology; see
for instance~\cite{YBE} for a brief introduction. The study of its solutions
has been a vivid research area for the last half of a century. Following
Drinfel$'$d~\cite{MR1183474}, set-theoretic solutions, or \textit{braided
sets}, received special attention. Concretely, these are sets~$X$ endowed with
a \textit{braiding}, i.e., a not necessarily invertible map
$\sigma \colon X^{\times 2} \to X^{\times 2}$,
often written as $\sigma(a,b) = ( \prescript{a}{}{b}, a^b)$, satisfying the YBE
\begin{align}
    \label{E:YBE}
    (\sigma \times \Id) (\Id \times \sigma) (\sigma \times \Id) &= (\Id \times \sigma) (\sigma \times \Id) (\Id \times \sigma) \colon X^{\times 3} \to X^{\times 3}.
\end{align}

Two families of braided sets are particularly well explored:
\begin{itemize}
	\item The map $$\sigma(a,b) =  (b,a^b)$$ is a braiding if and only if the
		operation $a \op b := a^b$ is \textit{self-distributive}, in the
		sense of 
		\begin{align}\label{E:SD}
		(a \op b) \op c = (a \op c) \op (b \op c).
		\end{align}
		 Such datum
		$(X,\,\op)$ is called a \textit{shelf}. The term \textit{rack} is used if
		moreover the right translations $a \mapsto a \op b$ are bijections
		on~$X$ for all $b \in X$, which is equivalent to the invertibility
		of~$\sigma$. A \textit{quandle} is a rack satisfying 	
		$a \op a = a$ for all~$a$, which means that~$\sigma$ is the
		identity on the diagonal of~$X^{\times 2}$. A group with the
		conjugation operation $a \op b =
		b^{-1}ab$ yields an important example of quandles. A systematic study of
		self-distributivity dates back to Joyce~\cite{MR638121} and
		Matveev~\cite{MR672410}.
		\item A \emph{cycle set}, or \textit{right-cyclic quasigroup}, is a set~$X$
			with a binary operation~$\cdot$ satisfying 
			\begin{align}\label{E:Cyclic}
			(a\cdot b)\cdot (a\cdot c)=(b\cdot a)\cdot (b\cdot c)
			\end{align}
			and having all the left translations $a \mapsto
			b \cdot a$ bijective, the inverse operation being denoted by $a \mapsto  b
			\ast a$. As pointed out by Rump~\cite{MR2132760}, these
			give rise to involutive braidings $$\sigma(a,b) = ( (b \ast a) \cdot b, b
			\ast a),$$ and all braidings of a certain type  
			can be obtained this way.
	\end{itemize}

Racks and \textit{non-degenerate} cycle sets (i.e., for which the squaring map
$a \mapsto a \cdot a$ is bijective) can be included into the much more
general---and hence less understood---family of \textit{biracks}, introduced by
Fenn, Rourke, and Sanderson in~\cite{FRS_BirackHom}. These are sets with
invertible braidings which are \textit{left} and \textit{right non-degenerate},
i.e., their maps $a \mapsto a^b$ and $a \mapsto \prescript{b}{}{a}$ are bijective.

Important advances in knot-theoretic and Hopf-algebraic applications of
self-distributivity are due to the \textit{homological approach},
initiated by Fenn--Rourke--Sanderson \cite{RackHom} and
Carter--Jelsovsky--Kamada--Langford--Saito \cite{MR1990571}, and further developed
by Andruskiewitsch--Gra{\~n}a \cite{MR1994219}. Rack (co)homology theories were
generalized to the case of arbitrary braided sets by Carter--Elham\-dadi--Saito
\cite{MR2128041} and further developed by the first author~\cite{Lebed1}.
For biracks, Fenn, Rourke, and Sanderson \cite{FRS_BirackHom} constructed an
alternative, and more manageable, (co)homology theory, recently revived by
Ceniceros--El\-ham\-da\-di--Green--Nelson \cite{BirackHom}. The knot invariant
construction, which motivated the rack cohomology theory,
survived in all these generalized settings. Another application of these
cohomology theories is a construction of new examples of racks and braided sets via
an extension procedure using cocycles of low degree. Recently, the extension
techniques were adapted to cycle sets by the second
author~\cite{Vendramin_Ext}, resulting in counter-examples to several
conjectures concerning  involutive solutions to the Yang--Baxter equation.

The starting point of this paper is a study of the (co)homology of left non-degenerate (=LND) braided sets, including biracks. We introduce a coefficient version of the complex from~\cite{FRS_BirackHom} and extend it from biracks to all LND braided sets (Theorem~\ref{T:Birack}). It is then related to the complex from \cite{MR2128041, Lebed1} (recalled in Theorem~\ref{T:BrHom}) by an explicit isomorphism (Theorem~\ref{T:HomEquiv}). For a concrete braided set, one can thus choose between the two constructions the one more suitable for computations.

We also extend to LND braidings of a certain type the degenerate and normalized (co)homology constructions, given for quandles in~\cite{MR1990571} and for a more general class of biracks in~\cite{FRS_BirackHom}. These (co)homologies turn out to be related by a splitting theorem (Theorem~\ref{T:DegAndNormHom}), which generalizes the analogous result for quandles, obtained by Litherland and Nelson in~\cite{MR1952425}. 
More precisely, we establish a splitting theorem in the abstract context of (a skew version of) cubical homology (Theorem~\ref{PR:PreCub}), and then refine our concrete complexes into cubical structures. 

On the way we show how to associate to any LND braiding a shelf operation that captures many of its properties: invertibility, involutivity, the structure (semi)group, the action of positive braid monoids on its tensor powers, etc. (Propositions~\ref{PR:AssShelf} and~\ref{PR:Guitar}, Theorem~\ref{T:BijG}). This reduces the study of certain aspects of LND braided sets to that of shelves. 
An analogous construction for biracks was considered by Soloviev~\cite{MR1809284} and, in the more restricted case of braided groups, by Lu, Yan, and Zhu~\cite{MR1769723}. 

The last block of our results concerns cycle sets, whose (co)homologic\-al aspects remained unexplored until now. A cycle set is automatically an LND braided set (but not necessarily a birack!), which allows an application of our general (co)homology constructions described above. We provide a detailed analysis of cycle set extensions in terms of their cohomology groups (Theorem~\ref{T:Ext=H2}). Some explicit examples of extensions are given (Theorem~\ref{T:level_m}), implying the estimate $N_m \leqslant 2N_{m-1}$ for the minimal size~$N_m$ of square-free multipermutation cycle sets of level~$m$ (see Section~\ref{S:Mutliperm} for the definitions). We disprove the relation $N_m = 2^{m-1}+1$ conjectured by Cameron and Gateva-Ivanova \cite{MR2885602}, using a computer-aided computation of the~$N_m$ for small~$m$. 
 
Finally, we express the second cohomology of LND braided sets---in particular, cycle sets---in terms of the first group cohomology of their structure groups (Theorem~\ref{T:GroupCohom}), generalizing Etingof and Gra{\~n}a's result for racks~\cite{MR1948837}.      

Graphical tools play a central role in most of our constructions and proofs, making them more intuitive and concise. 

\section{Skew cubical structures and homology}\label{S:Cubical}  

The chain complexes we work with in this paper carry a much richer structure than a differential. This section reviews such enriched structures and establishes a homology splitting result for one of them.

\begin{defn}\label{D:cubical}
A \emph{pre-cubical structure} in a category~$\C$ consists of a family of objects $C_k$, $k \geqslant 0$, and of two families of morphisms $d^+_i, d^-_i \colon C_k \to C_{k-1}$ for $k \geqslant 1$, $1 \leqslant i \leqslant k$, satisfying the compatibility conditions
\begin{align}\label{E:PreCub}
d^\varepsilon_i d^\zeta_j &= d^\zeta_{j-1}d^\varepsilon_i \qquad \text{for all } i < j \:\text{and } \varepsilon,\zeta \in \{+,-\}.
\end{align}
These~$d_i$ are referred to as \emph{boundaries}. Such a structure is called \emph{weak skew cubical} if it also includes \emph{degeneracies} $s_i \colon C_k \to C_{k+1}$ for $k \geqslant 1$, $1 \leqslant i \leqslant k$, subject to relations
\begin{align}
d^\varepsilon_i s_j &= s_{j-1}d^\varepsilon_i & \text{for all } i < j \:\text{and } \varepsilon \in \{+,-\},\label{E:WeakCub}\\
d^\varepsilon_i s_j &= s_{j}d^\varepsilon_{i-1} & \text{for all } i > j+1 \:\text{and } \varepsilon \in \{+,-\},\label{E:WeakCub2}\\
d^\varepsilon_i s_i &= d^\varepsilon_{i+1}s_{i} & \text{for all } i \:\text{and } \varepsilon \in \{+,-\}.\label{E:WeakCub3}
\end{align}
A \emph{skew} (respectively, \emph{semi-strong skew}) \emph{cubical structure} satisfies moreover the property
\begin{align}
s_i s_j &= s_{j+1}s_i & \text{for all } i \leqslant j\label{E:WeakCubDeg}
\end{align}
and the upgraded version
\begin{align}
d^\varepsilon_i s_i &= d^\varepsilon_{i+1}s_{i} = \Id \label{E:WeakCub3'}
\end{align}
of condition~\eqref{E:WeakCub3} for all $\varepsilon \in \{+,-\}$ (respectively, for $\varepsilon = +$ only).
\end{defn}

In this paper we stick to a purely algebraic treatment of pre-cubical structures. For a topological interpretation, see the classical references \cite{SerreThesis,KanCubic,BH_cubes}. We sketch it in Remark~\ref{R:CubSkewcubSimpl} only. That remark also compares our structures with the much more classical cubical and simplicial ones.

The importance of the different types of structures we introduced is illustrated by the following result. For simplicity, it is stated for the category $\mathbf{Mod}_R$ of modules over a unital commutative ring~$R$.

\begin{thm}\label{PR:PreCub} 
Let $(C_k,\,d^+_i,\, d^-_i)$ be a pre-cubical structure in~$\mathbf{Mod}_R$.
\begin{enumerate}
\item The $R$-modules~$C_k$ endowed with the alternating sum maps 
\begin{align}\label{E:PreCubDiff}
\partial^{(\alpha, \beta)}_k &= \alpha\sum\nolimits_{i=1}^k (-1)^{i-1}d^+_i + \beta \sum\nolimits_{i=1}^k (-1)^{i-1} d^-_i
\end{align}  
form a chain complex for any $\alpha, \beta \in R$. 
\item If the boundaries $(d^+_i,\, d^-_i)$ can be completed with degeneracies~$s_i$, then the images $C^{\Degen}_k= \sum_{i=1}^{k-1}\im s_i$ form a sub-complex of $(C_k,\partial^{(\alpha, \beta)}_k)$ for any choice of $\alpha, \beta \in R$.
\item If the structure $(C_k,\, d^+_i,\, d^-_i,\, s_i)$ is moreover semi-strong skew cubical, then one has $R$-module decompositions
\begin{align}
C_k&= C^{\Degen}_k\oplus C^{\Norm}_k, \qquad\qquad C^{\Norm}_k = \im \eta_k, \label{E:SplittingGeneral}\\
\text{ where }\; \eta_k &= (\Id - s_1d^+_2)(\Id - s_2d^+_3)\cdots(\Id - s_{k-1}d^+_k).  \notag
\end{align} 
It yields a chain complex splitting for $(C_k,\partial^{(\alpha, \beta)}_k)$ for any $\alpha, \beta \in R$.
\end{enumerate}
\end{thm}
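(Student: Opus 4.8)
The plan is to handle the three parts in order, building up the algebra of faces and degeneracies as I go. Throughout I abbreviate $e_i := \alpha\, d^+_i + \beta\, d^-_i$, so that $\partial^{(\alpha,\beta)}_k = \sum_{i=1}^k (-1)^{i-1} e_i$. For part~(1) I would first note that the relations~\eqref{E:PreCub}, holding for all sign choices, combine into the single identity $e_i e_j = e_{j-1} e_i$ for $i<j$. Granting this, $\partial\partial=0$ is the classical alternating-sum cancellation: expanding $\partial^{(\alpha,\beta)}_{k-1}\partial^{(\alpha,\beta)}_k=\sum_{i,j}(-1)^{i+j}e_ie_j$, splitting into $i<j$ and $i\geq j$, rewriting the first sum via $e_ie_j=e_{j-1}e_i$, and reindexing so that it cancels the second termwise. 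The scalars $\alpha,\beta$ play no role.

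For part~(2) I would compute $\partial^{(\alpha,\beta)}_k s_j$ directly, commuting each $e_i$ past $s_j$ with \eqref{E:WeakCub}--\eqref{E:WeakCub3}. The terms $i<j$ give $s_{j-1}e_i$ and the terms $i>j+1$ give $s_j e_{i-1}$, all landing in $\im s_{j-1}+\im s_j\subseteq C^\Degen_{k-1}$, while the two middle terms $i=j,\,i=j+1$ cancel by~\eqref{E:WeakCub3}. Hence $\partial^{(\alpha,\beta)}_k(\im s_j)\subseteq C^\Degen_{k-1}$ for every $j$, so $C^\Degen_\bullet$ is a subcomplex, for every $\alpha,\beta$.

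Part~(3) is the substantial one. Setting $p_i := s_i d^+_{i+1}$, the relation~\eqref{E:WeakCub3'} gives $d^+_{i+1}s_i=\Id$, so each $p_i$ is idempotent with $\im p_i=\im s_i$ and $C^\Degen_k=\sum_i\im p_i$. I would then show $\eta_k$ is the projection onto $C^\Norm_k$ along $C^\Degen_k$ as follows. First, $\eta_k s_j=0$: one pushes $s_j$ leftward through the later factors using $p_\ell s_j=s_j p_{\ell-1}$ for $\ell>j$ (a consequence of~\eqref{E:WeakCub2} and~\eqref{E:WeakCubDeg}), i.e.\ $(\Id-p_\ell)s_j=s_j(\Id-p_{\ell-1})$, until $s_j$ meets the factor $(\Id-p_j)$ and dies via $(\Id-p_j)s_j=0$. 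Consequently $\eta_k p_j=\eta_k s_j d^+_{j+1}=0$, whence $\eta_k(\Id-p_i)=\eta_k$ and $\eta_k^2=\eta_k$; and expanding the product shows every nontrivial monomial of $\Id-\eta_k$ begins on the left with some $p_i$, so $\im(\Id-\eta_k)\subseteq C^\Degen_k$. Together these give $C_k=\im\eta_k\oplus\ker\eta_k$ with $\ker\eta_k=C^\Degen_k$ and $\im\eta_k=C^\Norm_k$, which is \eqref{E:SplittingGeneral}.

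The step I expect to be the main obstacle is promoting this module splitting to a splitting of complexes, i.e.\ showing $C^\Norm_\bullet$ is itself $\partial$-stable ($C^\Degen_\bullet$ already is, by part~(2)). The crux is the description $C^\Norm_k=\bigcap_{j=2}^{k}\ker d^+_j$, which comes from the identities $d^+_j\eta_k=0$ for $2\leq j\leq k$; these I would prove by commuting $d^+_j$ rightward through the factors of $\eta_k$ (via~\eqref{E:WeakCub2} and~\eqref{E:PreCub}) until the factor $(\Id-p_{j-1})$ absorbs it through $d^+_j s_{j-1}=\Id$, the boundary interaction with $(\Id-p_{j-2})$ vanishing because $d^+_{j-1}d^+_j=d^+_{j-1}d^+_{j-1}$ by~\eqref{E:PreCub}. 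This description in particular forces $p_m\eta_k=0$. The final ingredient is the clean identity $d^+_m d^-_m\,(\Id-p_m)=0$, equivalently $d^+_m d^-_m s_m d^+_{m+1}=d^+_m d^-_m$, which follows from~\eqref{E:WeakCub3}, \eqref{E:WeakCub3'} and two uses of~\eqref{E:PreCub}; combined with $p_m\eta_k=0$ it yields $d^+_m d^-_m\eta_k=d^+_m d^-_m p_m\eta_k=0$. Since the pre-cubical relations collapse $d^+_m\partial^{(\alpha,\beta)}_k\eta_k$ down to $(-1)^{m-1}\beta\,d^+_m d^-_m\eta_k$ (all $d^+$-contributions die against $d^+_j\eta_k=0$, and the $d^-_m d^+_m$ term against $d^+_m\eta_k=0$), we get $\partial^{(\alpha,\beta)}_k(C^\Norm_k)\subseteq\bigcap_{j=2}^{k-1}\ker d^+_j=C^\Norm_{k-1}$. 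Thus both summands are subcomplexes for every $\alpha,\beta$, giving the asserted chain-complex splitting. The semi-strong hypothesis enters exactly through $d^+_i s_i=d^+_{i+1}s_i=\Id$ for $\varepsilon=+$ only; the $d^-$ faces are never assumed degenerate, which is the source of the asymmetry between $d^+$ and $d^-$ in $\eta_k$.
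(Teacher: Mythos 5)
Your proposal is correct, and parts (1)–(2) follow the classical computations the paper leaves to the reader. For the substantial part (3), however, you take a genuinely different route at the decisive step. The module decomposition itself is obtained from the same ingredients in both proofs ($\eta_k$ kills $\im s_j$, and $\Id-\eta_k$ lands in $C^{\Degen}_k$; you package this as idempotency of $\eta_k$, the paper as an abstract two-map lemma). But for promoting the splitting to a splitting of \emph{complexes}, the paper proves that $\eta_k$ is an endomorphism of the chain complex, i.e.\ the identity~\eqref{E:EtaComplexMap} $\sum_i(-1)^{i-1}d^\varepsilon_i\eta_k=\eta_{k-1}\sum_i(-1)^{i-1}d^\varepsilon_i$ for each $\varepsilon$, via commutation rules for the factors $\Id-s_id^+_{i+1}$ and an inductive rewriting of the product; stability of $\im\eta_k$ under $\partial^{(\alpha,\beta)}_k$ is then automatic. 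You instead never show $\eta_k$ is a chain map: you identify $C^{\Norm}_k=\im\eta_k=\bigcap_{j=2}^{k}\ker d^+_j$ (a Moore-complex-type description, which is correct: $d^+_j$ commutes past the factors $\Id-p_\ell$ for $\ell\leqslant j-2$ and is annihilated by $\Id-p_{j-1}$ via $d^+_js_{j-1}=\Id$), and then check directly that $\partial^{(\alpha,\beta)}_k$ preserves this joint kernel, the only surviving term $d^+_md^-_m\eta_k$ dying against your identity $d^+_md^-_m(\Id-p_m)=0$ combined with $p_m\eta_k=0$. Each approach has its merits: the paper's yields the stronger functorial statement that the projection $\eta$ is a chain map (uniformly in $\alpha,\beta$, and separately for the $d^+$- and $d^-$-sums), while yours is more elementary, avoids the inductive product manipulation, and produces the intrinsic kernel description of the normalized complex, which the paper does not state and which is of independent interest. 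One small correction to your closing remark: the semi-strong hypothesis does not enter \emph{only} through $d^+_is_i=d^+_{i+1}s_i=\Id$; you also use~\eqref{E:WeakCubDeg} (in $p_\ell s_j=s_jp_{\ell-1}$), which is likewise an axiom beyond the weak skew cubical ones—as does the paper.
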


According to the theorem, decomposition~\eqref{E:SplittingGeneral} induces a decomposition in homology. We use the standard terminology for such splittings:

\begin{defn}
The ${\Degen}$- and ${\Norm}$- parts of the complexes and homology groups above are called \emph{degenerate} and, respectively, \emph{normalized}.
\end{defn}

\begin{proof}
The first two points are classical and can be verified by a straightforward computation. We give a detailed proof for the last point, which to our knowledge is new. Fix an arbitrary choice of $\alpha, \beta \in R$, and put $\partial_k = \partial^{(\alpha, \beta)}_k$.

We first show that the~$\eta_k$ form an endomorphism of the complex $(C_k,\partial_k)$. For this, it suffices to verify the relations 
\begin{align}\label{E:EtaComplexMap}
&\sum\nolimits_{i=1}^k (-1)^{i-1}d^\varepsilon_i \eta_k = \eta_{k-1}\sum\nolimits_{i=1}^k (-1)^{i-1}d^\varepsilon_i, \qquad \varepsilon \in \{+,-\}.
\end{align}
Put $p_i = \Id - s_{i}d^+_{i+1}$, and rewrite $\eta_k$ as $p_1 \cdots p_{k-1}$. The weak skew cubical axioms imply the following commutation rules for the~$p_i$ and the boundaries: 
\begin{align}
d^\varepsilon_i p_j = p_{j-1}d^\varepsilon_i, \quad i < j; \qquad\qquad &d^\varepsilon_i p_j = p_{j}d^\varepsilon_{i}, \quad i > j+1;\label{E:WeakCubP}\\
(d^\varepsilon_i-d^\varepsilon_{i+1})p_{i} = &d^\varepsilon_i-d^\varepsilon_{i+1}.\label{E:WeakCub3P}
\end{align}
Further, semi-strong skew cubical axioms imply the simplification rule
\begin{align}
p_id^\varepsilon_{i+1}p_{i+1} &= d^\varepsilon_{i+1}p_{i+1}.\label{E:WeakCub4P}
\end{align}
Indeed, this property rewrites as
\begin{align*}
s_{i}d^+_{i+1}d^\varepsilon_{i+1}(\Id-s_{i+1}d^+_{i+2}) &= 0,
\end{align*}
which follows from the computation
\begin{align*}
s_{i}d^+_{i+1}d^\varepsilon_{i+1}s_{i+1}d^+_{i+2} &\overset{\eqref{E:PreCub}}{=}s_{i}d^\varepsilon_{i+1}d^+_{i+2}s_{i+1}d^+_{i+2} \overset{\eqref{E:WeakCub3'}}{=} s_{i}d^\varepsilon_{i+1}d^+_{i+2} \overset{\eqref{E:PreCub}}{=} s_{i}d^+_{i+1}d^\varepsilon_{i+1}.
\end{align*}
Now, relations~\eqref{E:WeakCubP} allow one to rewrite the right-hand side of~\eqref{E:EtaComplexMap} as
\begin{align*}
p_1 &\cdots p_{k-2}\sum\nolimits_{i=1}^k (-1)^{i-1}d^\varepsilon_i \\
&= \sum\nolimits_{i=1}^{k-1} (-1)^{i-1} p_1 \cdots p_{i-1} d^\varepsilon_i p_{i+1} \cdots p_{k-1} + (-1)^{k-1}p_1 \cdots p_{k-2} d^\varepsilon_k.
\end{align*}
To conclude, we obtain the identical expression for the left-hand side of~\eqref{E:EtaComplexMap} by repeatedly using the following computation (with $s < k-1$):
\begin{align*}
\sum\nolimits_{i=s}^k& (-1)^{i-1} d^\varepsilon_i p_s \cdots p_{k-1} \\
\overset{\eqref{E:WeakCubP},\eqref{E:WeakCub3P}}{=}&\sum\nolimits_{i=s+2}^k (-1)^{i-1} p_s d^\varepsilon_i p_{s+1} \cdots p_{k-1} \\
&\qquad\qquad + ((-1)^{s-1} d^\varepsilon_s + (-1)^{s} d^\varepsilon_{s+1})p_{s+1} \cdots p_{k-1}\\ 
 \overset{\eqref{E:WeakCub4P}}{=} &(-1)^{s-1} d^\varepsilon_s p_{s+1} \cdots p_{k-1} + p_s (\sum\nolimits_{i=s+1}^k (-1)^{i-1} d^\varepsilon_i)p_{s+1} \cdots p_{k-1}.
\end{align*}

Both $C^{\Degen}_k = \sum \im s_i$ and $C^{\Norm}_k = \im \eta_k$ are thus sub-complexes of $(C_k,\partial^{(\alpha, \beta)}_k)$. It remains to establish the $R$-module decomposition $C_k= C^{\Degen}_k\oplus C^{\Norm}_k$. The definition of the map~$\eta_k$ directly gives the property 
$\im (\Id - \eta_k) \subseteq \sum\nolimits_i \im s_i$. 
Further, the semi-strong skew cubical axioms imply the following commutation rules for the~$p_i$ and the degeneracies: 
\begin{align*}
&s_i p_j = p_{j+1}s_i, \quad i \leqslant j; & &s_i p_j = p_{j}s_{i}, \quad i > j+1;\\
&p_is_i = 0; \qquad p_is_{i+1}=s_{i+1} - s_i; & & (s_i-s_{i+1})p_{i} = s_i-s_{i+1}.
\end{align*}
These relations imply that the map~$\eta_k$ vanishes on~$\im s_i$ for all $1 \leqslant i \leqslant k-1$. We conclude by applying to the data $C^{\Degen}_k \overset{\iota_k}{\hookrightarrow} C_k \overset{\eta_k}{\to} C_k$ (where~$\iota_k$ is the inclusion map) the following lemma.

\begin{lem}
Let $M,M'$ be two $R$-modules, and let $M' \overset{\alpha}{\to} M \overset{\beta}{\to} M$ be two $R$-linear maps satisfying the conditions $\beta\alpha = 0$ and $\im (\Id - \beta) \subseteq \im \alpha$. Then $M$ decomposes as $\im \alpha \oplus \im \beta$.
\end{lem}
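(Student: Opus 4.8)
The plan is to observe that the two hypotheses force~$\beta$ to be \emph{idempotent}, after which the decomposition becomes the standard splitting associated to an idempotent endomorphism. First I would show that $\beta^2 = \beta$. For any $m \in M$, the element $(\Id - \beta)m$ lies in $\im(\Id - \beta) \subseteq \im \alpha$ by the second hypothesis, and $\beta$ annihilates $\im \alpha$ because $\beta\alpha = 0$. Hence $\beta(\Id - \beta)m = 0$ for all~$m$, i.e.\ $\beta - \beta^2 = 0$. Thus~$\beta$ is a projection.

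Once idempotency is established, I would invoke the classical fact that an idempotent $R$-linear endomorphism~$\beta$ yields an $R$-module decomposition $M = \im \beta \oplus \ker \beta$: every $m$ writes as $m = \beta m + (\Id - \beta)m$ with $\beta m \in \im \beta$ and $(\Id-\beta)m \in \ker \beta$ (the latter since $\beta(\Id - \beta) = 0$), and the intersection is trivial because any $x = \beta y \in \ker \beta$ satisfies $x = \beta y = \beta^2 y = \beta x = 0$. It then remains only to identify $\ker \beta$ with $\im \alpha$. The inclusion $\im \alpha \subseteq \ker \beta$ is immediate from $\beta\alpha = 0$. Conversely, if $\beta m = 0$ then $m = (\Id - \beta)m \in \im(\Id - \beta) \subseteq \im \alpha$, giving $\ker \beta \subseteq \im \alpha$. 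Substituting $\ker \beta = \im \alpha$ into the idempotent splitting produces $M = \im \beta \oplus \im \alpha$, as desired.

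The only genuinely substantive step is the recognition that~$\beta$ is idempotent; everything else is routine once that is in hand. I do not anticipate a real obstacle here, since both inputs to the idempotency argument ($\beta$ vanishing on $\im\alpha$, and $\im(\Id-\beta)$ landing in $\im\alpha$) are exactly the two hypotheses, and the remaining manipulations are formal. One point to keep in mind is that~$M'$ plays no role beyond providing the source of~$\alpha$; the entire argument takes place in~$M$ through the maps~$\beta$ and~$\Id - \beta$.
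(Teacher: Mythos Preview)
Your proof is correct and follows essentially the same approach as the paper. Both arguments hinge on deducing that $\beta$ is idempotent from the chain $\im(\Id-\beta)\subseteq\im\alpha\subseteq\ker\beta$; the only cosmetic difference is that the paper verifies $\im\alpha+\im\beta=M$ and $\im\alpha\cap\im\beta=0$ directly, whereas you pass through the identification $\ker\beta=\im\alpha$ before invoking the standard idempotent splitting.
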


\begin{proof}
Condition $\im (\Id - \beta) \subseteq \im \alpha$ implies that $\im \alpha + \im \beta$ covers the whole $R$-module~$M$. Let us show that this sum is direct. Relation $\beta\alpha = 0$ means $\im \alpha \subseteq \ker \beta$, implying $\im (\Id - \beta) \subseteq \im \alpha\subseteq \ker \beta$, which translates as $\beta^2=\beta$. But then the intersection $\ker \beta \cap \im \beta$ is zero, hence so is its sub-module $\im \alpha \cap \im \beta$. \qedhere\qedhere
\end{proof}
\end{proof}

\begin{rem}\label{R:CubSkewcubSimpl}
Eilenberg and MacLane~\cite{AcyclicModels} used the morphisms~$\eta_k$ to compare the full and the normalized versions of \emph{simplicial homology} (recall that simplicial structures are similar to skew cubical ones, except that they include only one family of boundaries~$d^+_i$). For \emph{cubical homology}, they employed the morphisms $\eta'_k= (\Id - s_1d_1^+)\cdots(\Id - s_{k}d_k^+)$ instead. Note that we use the classical notion of pre-cubical structure, but our skew cubical structures are different from the classical cubical ones. Concretely, a cubical structure bears $k+1$ degeneracies $s_1, \ldots, s_{k+1}$ on~$C_k$ while we stop at~$s_k$, and it satisfies conditions $d^\varepsilon_i s_i =  \Id$ and $d^\varepsilon_{i+1}s_{i} = s_{i}d^\varepsilon_{i}$ instead of~\eqref{E:WeakCub3'}. Topologically, cubical degeneracies correspond to compressing the unit cube in~$\R^n$ in the direction of one of the axes via the maps 
$$(x_1,\ldots,x_k) \mapsto (x_1, \ldots,x_{i-1},x_{i+1}, \ldots,x_k),$$ 
while our~$s_i$ can be thought of as squeezings onto the diagonal hyperplane $x_i = x_{i+1}$ via the map 
$$\varsigma_i \colon (x_1,\ldots,x_k) \mapsto (x_1, \ldots,x_{i-1},x_i+x_{i+1}-x_ix_{i+1}, \ldots,x_k).$$ 
This explains the word \emph{skew} in our terminology. More explicitly, together with the topological boundaries
\begin{align*}
\delta^+_i &\colon (x_1,\ldots,x_k) \mapsto (x_1, \ldots,x_{i-1},0,x_{i}, \ldots,x_k),\\
\delta^-_i &\colon (x_1,\ldots,x_k) \mapsto (x_1, \ldots,x_{i-1},1,x_{i}, \ldots,x_k),
\end{align*}
the~$\varsigma_i$ satisfy the relations dual to our semi-strong skew cubical axioms. Observe that for a cubical structure, the $C^{\Degen}_k$ form a sub-complex of $(C_k,\partial^{(\alpha, \beta)}_k)$ only for  $\beta=-\alpha$; no splitting of type~\eqref{E:SplittingGeneral} is known in this case. The structures we will treat in Section~\ref{S:Splitting} will be semi-strong skew cubical but neither skew cubical nor cubical.
\end{rem}

\section{Braided homology}\label{S:BrHom}  

Fix a braided set~$X$, with a braiding $\sigma \colon X^{\times 2} \to X^{\times 2}$, $(a,b) \mapsto (\prescript{a}{}{b}, a^b)$. In this section we recall and slightly extend the (co)homology theory for $(X, \, \sigma)$, developed in~\cite{Lebed1}. It is referred to as \textit{braided (co)homology} here.

We first comment on the \textit{graphical calculus}, which renders our constructions more intuitive. Braided diagrams represent here maps between sets, a set being associated to each strand; horizontal glueing corresponds to Cartesian product, vertical glueing to composition (which should be read from bottom to top), straight vertical lines to identity maps, crossings to the braiding~$\sigma$, and opposite crossings to its inverse (whenever it exists), as shown in Figure~\ref{P:YBE}\rcircled{A}. With these conventions, the Yang--Baxter equation~\eqref{E:YBE} for~$\sigma$ becomes the diagram from Figure~\ref{P:YBE}\rcircled{B}, which is precisely the braid- and knot-theoretic R$\mathrm{III}$ (= Reidemeister~$\mathrm{III}$) move. Associating \textit{colors} (i.e., arbitrary elements of the corresponding sets) to the bottom free ends of a diagram and applying to them the map encoded by the diagram, one determines the colors of the top free ends; Figure~\ref{P:YBE}\rcircled{A} contains a simple case of this process, referred to as \textit{color propagation}.        
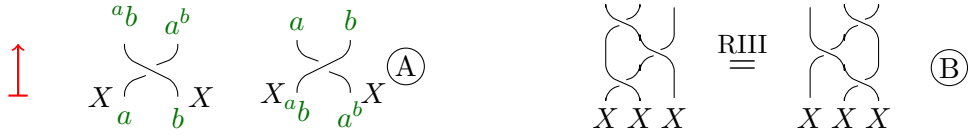
\begin{figure}[h]
\centering
\begin{tikzpicture}[scale=0.7]
\draw [rounded corners](0,0)--(0,0.25)--(0.4,0.4);
\draw [rounded corners](0.6,0.6)--(1,0.75)--(1,1);
\draw [rounded corners](1,0)--(1,0.25)--(0,0.75)--(0,1);
\node  at (0,-0.4) [mygreen]  {$a$};
\node  at (1,-0.4) [mygreen] {$b$};
\node  at (0,1) [mygreen,above] {$\prescript{a}{}{b}$};
\node  at (1,1) [mygreen,above] {$a^b$};
\node  at (0,0) [left] {$X$};
\node  at (1,0) [right] {$X$};
\draw [|->, red, thick]  (-2,0) -- (-2,1);
\node  at (2,0){ };
\end{tikzpicture}
\begin{tikzpicture}[scale=0.7]
\draw [rounded corners](0,0)--(0,0.25)--(1,0.75)--(1,1);
\draw [rounded corners](1,0)--(1,0.25)--(0.6,0.4);
\draw [rounded corners](0.4,0.6)--(0,0.75)--(0,1);
\node  at (0,-0.4) [mygreen] {$\prescript{a}{}{b}$};
\node  at (1,-0.4) [mygreen] {$a^b$};
\node  at (0,1) [mygreen,above] {$a$};
\node  at (1,1) [mygreen,above] {$b$};
\node  at (2,0.5){\rcircled{A}};
\node  at (0,0) [left] {$X$};
\node  at (1,0) [right] {$X$};
\node  at (5,0){ };
\end{tikzpicture}
\begin{tikzpicture}[xscale=0.45,yscale=0.4]
\draw [rounded corners](0,0)--(0,0.25)--(0.4,0.4);
\draw [rounded corners](0.6,0.6)--(1,0.75)--(1,1.25)--(1.4,1.4);
\draw [rounded corners](1.6,1.6)--(2,1.75)--(2,3);
\draw [rounded corners](1,0)--(1,0.25)--(0,0.75)--(0,2.25)--(0.4,2.4);
\draw [rounded corners](0.6,2.6)--(1,2.75)--(1,3);
\draw [rounded corners](2,0)--(2,1.25)--(1,1.75)--(1,2.25)--(0,2.75)--(0,3);
\node  at (0,0) [below] {$X$};
\node  at (1,0) [below] {$X$};
\node  at (2,0) [below] {$X$};
\node  at (4,1.5){\Large $\overset{\mathrm{RIII}}{=}$};
\end{tikzpicture}
\begin{tikzpicture}[xscale=0.45,yscale=0.4]
\draw [rounded corners](1,1)--(1,1.25)--(1.4,1.4);
\draw [rounded corners](1.6,1.6)--(2,1.75)--(2,3.25)--(1,3.75)--(1,4);
\draw [rounded corners](0,1)--(0,2.25)--(0.4,2.4);
\draw [rounded corners](0.6,2.6)--(1,2.75)--(1,3.25)--(1.4,3.4);
\draw [rounded corners](1.6,3.6)--(2,3.75)--(2,4);
\draw [rounded corners](2,1)--(2,1.25)--(1,1.75)--(1,2.25)--(0,2.75)--(0,4);
\node  at (0,1) [below] {$X$};
\node  at (1,1) [below] {$X$};
\node  at (2,1) [below] {$X$};
\node  at (4,2){\rcircled{B}};
\end{tikzpicture}
\caption{Color propagation through a crossing and its opposite, and the R$\mathrm{III}$ move representing the YBE.}\label{P:YBE}
\end{figure}

The role of coefficients in the braided homology will be played by the following structures:

\begin{defn}
	A \emph{right (braided) module} over a braided set $(X,\, \sigma)$ is a pair
	$(M, \,\rho)$, where $M$ is a set and $\rho \colon M \times X
	\to M$, $(m,a) \mapsto m \cdot a$, is a map compatible with~$\sigma$ in
	the sense of $$(m \cdot a) \cdot b = (m \cdot  \prescript{a}{}{b}) \cdot a^b$$ for all
	$m \in M,\, a,b \in X$. 
\emph{Left modules} $(N, \,\lambda \colon X \times N \to N)$ over $(X,\, \sigma)$ are defined similarly. See Figure~\ref{P:BrMod} for a diagrammatic version.
\end{defn}

\begin{figure}[h]
\centering
\begin{tikzpicture}[xscale=0.5,yscale=0.4]
 \draw [thick] (0,0) -- (0,2.5);
 \draw (1,0) -- (0,1);
 \draw (2,0) -- (0,2);
 \node at (0,2) [left]{$\rho$};
 \node at (0,1) [left]{$\rho$};
 \node at (2,0) [below] {$X$};
 \node at (1,0) [below] {$X$};
 \node at (0,0) [below] {$M$};
 \node  at (3.5,1.5){$=$};
\end{tikzpicture}
\begin{tikzpicture}[xscale=0.5,yscale=0.4]
 \node  at (-1.5,1.5){};
 \draw (1,0) -- (0,2);
 \draw [line width=4pt,white] (2,0) -- (0,1); 
 \draw (2,0) -- (0,1);
 \draw [thick] (0,0) -- (0,2.5);
 \node at (0,1) [left]{$\rho$};
 \node at (0,2) [left]{$\rho$};
 \node at (2,0) [below] {$X$};
 \node at (1,0) [below] {$X$};
 \node at (0,0) [below] {$M$};
 \node at (1,1) {$\sigma$}; 
\end{tikzpicture}
\begin{tikzpicture}[xscale=0.5,yscale=0.4]
 \node  at (-3.5,1.5){};
 \draw [thick] (3,0) -- (3,2.5);
 \draw (1,0) -- (3,2);
 \draw (2,0) -- (3,1);
 \node at (3,2) [right]{$\lambda$};
 \node at (3,1) [right]{$\lambda$};
 \node at (2,0) [below] {$X$};
 \node at (1,0) [below] {$X$};
 \node at (3,0) [below] {$N$};
 \node  at (5.5,1.5){$=$};
\end{tikzpicture}
\begin{tikzpicture}[xscale=0.5,yscale=0.4]
 \node  at (0.5,1.5){};
 \draw (1,0) -- (3,1); 
 \draw [line width=4pt,white] (2,0) -- (3,2);
 \draw [thick] (3,0) -- (3,2.5);
 \draw (2,0) -- (3,2);
 \node at (3,2) [right]{$\lambda$};
 \node at (3,1) [right]{$\lambda$};
 \node at (2,0) [below] {$X$};
 \node at (1,0) [below] {$X$};
 \node at (3,0) [below] {$N$};
 \node at (2,1) {$\sigma$}; 
\end{tikzpicture}
\caption{Right and left braided modules.}\label{P:BrMod}
\end{figure}
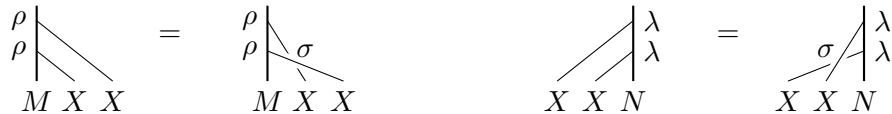

\begin{example}\label{EX:Adjoint}
The braided set $(X,\,\sigma)$ is a right and a left module over itself, with
the actions $\rho \colon (a,b) \mapsto a^b$ and $\lambda \colon (a,b) \mapsto
\prescript{a}{}{b}$. These modules are called \emph{adjoint}. More generally, any of the
powers $X^{\times n}$ is a right and a left module over $T(X)= \coprod_{i
\geqslant 0} X^{\times i}$, with the module structure adjoint to the extension
of the braiding~$\sigma$ to~$T(X)$, see Figure~\ref{P:TX}.
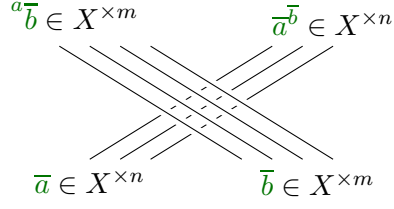
\begin{figure}[h]
\begin{tikzpicture}[xscale=0.4,yscale=0.25]
\draw (1,0)--(7,6);
\draw (2,0)--(8,6);
\draw (3,0)--(9,6);
\draw [line width=4pt,white] (6,0)--(0,6);
\draw [line width=4pt,white] (7,0)--(1,6);
\draw [line width=4pt,white] (8,0)--(2,6);
\draw [line width=4pt,white] (9,0)--(3,6);
\draw (6,0)--(0,6);
\draw (7,0)--(1,6);
\draw (8,0)--(2,6);
\draw (9,0)--(3,6);
\node [below] at (1,0) {${\color{mygreen}\overline{a}} \in X^{\times n}$};
\node [below] at (8.5,0) {${\color{mygreen}\overline{b}} \in X^{\times m}$};
\node [above] at (9,6) {${\color{mygreen}\overline{a}^{\overline{b}}} \in X^{\times n}$};
\node [above] at (0.5,6) {${\color{mygreen}\prescript{\overline{a}}{}{\overline{b}}} \in X^{\times m}$};
\end{tikzpicture}
\caption{The braiding of~$X$ extended to~$T(X)$.}\label{P:TX}
\end{figure}
\end{example}

\begin{example}\label{EX:Trivial}
The one-element set $I = \{\ast\}$ with the unique map $X \to I$ yields an example of a right and a left $(X,\,\sigma)$-module simultaneously. It is referred to as the \emph{trivial} right/left $(X,\,\sigma)$-module.
\end{example}

\begin{notation}\label{N:sigma_i}
   Let $(M,\,\rho)$ be a right module and $(N,\,\lambda)$ be
   a left module over a braided set $(X,\,\sigma)$.  We write 
\begin{align*}
   \sigma_i &= \Id_{M} \times \Id_{X}^{\times (i-1)} \times \sigma \times \Id_{X}^{\times (n-i-1)} \times \Id_N,\\
   \rho_0 &= \rho \times \Id_{X}^{\times (n-1)} \times \Id_N, \qquad \lambda_n = \Id_{M} \times \Id_{X}^{\times (n-1)} \times \lambda
\end{align*}
   (these are all maps from $M \times X^{\times n} \times N$ to $M \times X^{\times (n-1)}\times N$).
\end{notation}

The following result extends a construction from~\cite{Lebed1}:

\begin{thm}\label{T:BrHom}
   Let $(M,\,\rho)$ be a right module and $(N,\,\lambda)$ be a
    left module over a braided set $(X,\,\sigma)$. Consider the sets $C_n = M \times X^{\times n} \times N$.
    \begin{enumerate} 
        \item 
			The following maps form a pre-cubical structure on the~$C_n$:     
            \begin{align*}
                d_i^{l,+} &= \rho_0 \circ \sigma_1 \circ \cdots \circ \sigma_{i-1},
                & d_i^{r,-} &= \lambda_n \circ \sigma_{n-1} \circ \cdots \circ \sigma_{i}.
            \end{align*}
        \item 
            Now suppose that the braiding~$\sigma$ is invertible, and consider the maps
            \begin{align*}
                d_i^{l,-} &= \rho_0 \circ \sigma^{-1}_1 \circ \cdots \circ \sigma^{-1}_{i-1},
                & d_i^{r,+} &= \lambda_n \circ \sigma^{-1}_{n-1} \circ \cdots \circ \sigma^{-1}_{i}
            \end{align*}
            (Figure~\ref{P:BrHom}). For any choice of $\varepsilon,\zeta \in \{l,r\}$, the families $(d_i^{\varepsilon,+},\, d_i^{\zeta,-})$ with $n \geqslant 1$, $1 \leqslant i \leqslant n$, form a pre-cubical structure.
    \end{enumerate}
\end{thm}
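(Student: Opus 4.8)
The plan is to work entirely within the graphical calculus of Section~\ref{S:BrHom}. Each boundary map is read off as a braided diagram: $d^{l,+}_i$ (resp.\ $d^{l,-}_i$) combs the $i$-th $X$-strand leftward to the $M$-wall through positive crossings~$\sigma$ (resp.\ opposite crossings~$\sigma^{-1}$) and then absorbs it via~$\rho$, while $d^{r,-}_i$ (resp.\ $d^{r,+}_i$) combs the $i$-th strand rightward to the $N$-wall and absorbs it via~$\lambda$. Under this dictionary the pre-cubical compatibility~\eqref{E:PreCub}, namely $d^\varepsilon_i d^\zeta_j = d^\zeta_{j-1} d^\varepsilon_i$ for $i<j$, becomes the assertion that two braided diagrams---each absorbing the strands originally in positions $i$ and $j$ into their respective walls---agree as maps. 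I would prove every such equality by a planar isotopy of the two diagrams, the only non-planar ingredients being the Yang--Baxter move~\eqref{E:YBE} (the R$\mathrm{III}$ move of Figure~\ref{P:YBE}) to slide crossings past one another, and the braided-module axioms of Figure~\ref{P:BrMod} to slide an absorption~$\rho$ or~$\lambda$ across a crossing. Since~$\sigma$ is invertible in part~(2), the R$\mathrm{III}$ move and the module axioms are available in their inverse forms as well, so exactly the same moves apply to diagrams built from~$\sigma^{-1}$.

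The verification splits into three geometric situations according to the walls into which the two strands travel. If both strands go to the same wall (this covers $d^{l,\pm}_i d^{l,\pm}_j$ and $d^{r,\pm}_i d^{r,\pm}_j$), the two traveling strands run alongside each other and get stacked at that wall; the two sides of~\eqref{E:PreCub} differ only in the order of stacking, and their equality is obtained by combing one strand past the other through a cascade of R$\mathrm{III}$ moves and then invoking the module axiom once at the wall to interchange the two absorptions. If the strands travel to different walls and the left-bound one has the smaller index, they move apart with disjoint supports, so the relevant crossings and absorptions commute for trivial reasons. The remaining mixed case---the left-bound strand has the larger index, so the two strands converge and cross once---is resolved by a single well-placed R$\mathrm{III}$ move that separates them, after which the two absorptions again take place at disjoint walls.

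For part~(1) only the maps $d^{l,+}$ and $d^{r,-}$ occur, so it suffices to treat the four sign-patterns of~\eqref{E:PreCub} with $\varepsilon$ selecting $d^{l,+}$ and $\zeta$ selecting $d^{r,-}$; these realize, respectively, the same-wall-left, same-wall-right, divergent, and convergent situations, and no invertibility is needed since only~$\sigma$ appears. For part~(2) one fixes $\varepsilon,\zeta \in \{l,r\}$ and runs through the same three situations for each of the four sign-combinations of~\eqref{E:PreCub}; the only change is the crossing type ($\sigma$ versus $\sigma^{-1}$), handled uniformly as noted above.

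The main obstacle is the same-wall case. There the two absorbed strands genuinely interact, and proving that the two stacking orders agree requires an induction on the number of crossings separating the strands, applying the Yang--Baxter move repeatedly to comb the strands together before the single module-axiom step closes the argument; keeping track of the shifted index~$j-1$ on one side throughout this induction is what demands care. The convergent mixed case is the next most delicate, hinging on correctly placing the one R$\mathrm{III}$ move; the divergent case and all far-commutation reductions are immediate from disjointness of supports.
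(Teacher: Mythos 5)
Your proposal is correct and takes essentially the same approach as the paper, whose proof of Theorem~\ref{T:BrHom} is a one-line appeal to exactly the ingredients you spell out: ambient (planar) isotopy, the R$\mathrm{III}$ move, and the braided-module axioms of Figures~\ref{P:YBE}--\ref{P:BrMod}. One small imprecision: in the convergent mixed case the crossing between the two travelling strands cannot be removed (it persists, since the strands end at opposite walls) and must in general be slid past each of the $j-i-1$ intermediate strands, so this case also requires a cascade of R$\mathrm{III}$ moves exactly as in your same-wall case rather than a single one --- but this does not affect the soundness of the argument.
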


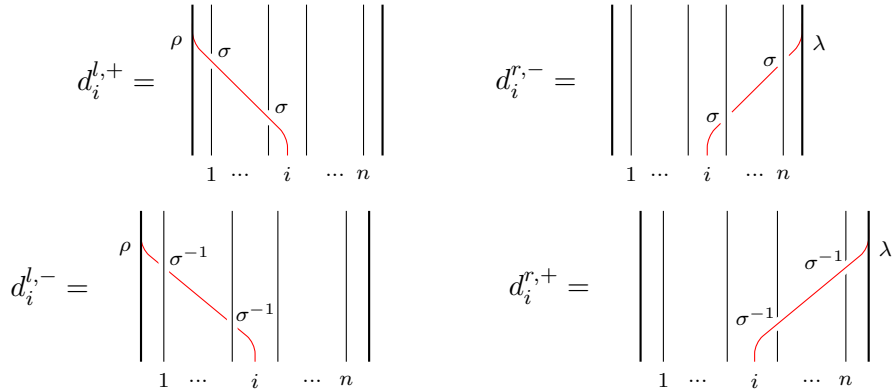
\begin{figure}[h]
\begin{tikzpicture}[scale=0.25]
 \draw [red,rounded corners] (-1,0) -- (-1,-1) -- (4,-6) -- (4,-7);
 \draw [ thick] (-1,1) --  (-1,-7);
 \draw [ thick] (9,1) --  (9,-7); 
 \draw (0,1) --(0,-1.6);
 \draw (0,-2.4) --  (0,-7);
 \node at (-5,-3) {$d_i^{l,+}=$};
 \node at (-1,-1)  [left] {$\scriptstyle \rho$};
 \draw (3,1) -- (3,-4.6);
 \draw (3,-5.4) -- (3,-7);
 \draw (5,1) -- (5,-7);
 \draw (8,1) -- (8,-7);
 \node at (-0.2,-2.3) [ above right] {$\scriptstyle \sigma$};
 \node at (2.8,-5.3) [above right] {$\scriptstyle \sigma$};
 \node at (0,-8) {$\scriptstyle 1$};
 \node at (1.5,-8) {$\scriptstyle \ldots$};
 \node at (4,-8) {$\scriptstyle i$};
 \node at (6.5,-8) {$\scriptstyle\ldots$};
 \node at (8,-8) {$\scriptstyle{n}$}; 
\end{tikzpicture}
\begin{tikzpicture}[scale=0.25]
 \draw [red,rounded corners] (9,0) -- (9,-1) -- (8.3,-1.7);
 \draw [red,rounded corners] (7.7,-2.3) -- (5.3,-4.7);
 \draw [red,rounded corners] (4.7,-5.3)--(4,-6) -- (4,-7);
 \draw [ thick] (-1,1) --  (-1,-7);
 \draw [ thick] (9,1) --  (9,-7);
 \draw (0,1) -- (0,-7);
 \node at (-12,-3) {};
 \node at (-5,-3) {$d_i^{r,-}=$};
 \node at (9,-1) [right] {$\scriptstyle \lambda$};
 \draw (3,1) -- (3,-7);
 \draw (5,1) -- (5,-7);
 \draw (8,1) -- (8,-7);
 \node at (5.2,-5) [left] {$\scriptstyle \sigma$};
 \node at (8.2,-2) [left] {$\scriptstyle \sigma$};
 \node at (0,-8) {$\scriptstyle 1$};
 \node at (1.5,-8) {$\scriptstyle \ldots$};
 \node at (4,-8) {$\scriptstyle i$};
 \node at (6.5,-8) {$\scriptstyle\ldots$};
 \node at (8,-8) {$\scriptstyle{n}$}; 
\end{tikzpicture}

\medskip
\begin{tikzpicture}[yscale=0.25, xscale=0.3]
 \draw [red,rounded corners] (-1,0) -- (-1,-1) -- (4,-6) -- (4,-7);
 \draw [ thick] (-1,1) --  (-1,-7);
 \draw [ thick] (9,1) --  (9,-7); 
 \node at (-5,-3) {$d_i^{l,-}=$};
 \node at (-1,-1)  [left] {$\scriptstyle \rho$};
 \draw [line width=4pt,white] (0,1) -- (0,-7); 
 \draw [line width=4pt,white] (3,1) -- (3,-7); 
 \draw (0,1) --  (0,-7); 
 \draw (3,1) -- (3,-7);
 \draw (5,1) -- (5,-7);
 \draw (8,1) -- (8,-7);
 \node at (-0.2,-2.4) [ above right] {$\scriptstyle\sigma^{-1}$};
 \node at (2.7,-5.4) [above right] {$\scriptstyle\sigma^{-1}$};
 \node at (0,-8) {$\scriptstyle 1$};
 \node at (1.5,-8) {$\scriptstyle \ldots$};
 \node at (4,-8) {$\scriptstyle i$};
 \node at (6.5,-8) {$\scriptstyle\ldots$};
 \node at (8,-8) {$\scriptstyle{n}$}; 
\end{tikzpicture}
\begin{tikzpicture}[yscale=0.25, xscale=0.3]
 \node at (-12,-3) {};
 \node at (-5,-3) {$d_i^{r,+}=$};
 \node at (9,-1) [right] {$\scriptstyle \lambda$};
 \draw (0,1) -- (0,-7); 
 \draw (2.8,1) -- (2.8,-7);
 \draw (5,1) -- (5,-7);
 \draw (8,1) -- (8,-7);
 \draw [line width=4pt,white,rounded corners] (9,0) -- (9,-1) -- (4,-6) -- (4,-7); 
 \draw [red,rounded corners] (9,0) -- (9,-1) -- (4,-6) -- (4,-7);
 \draw [ thick] (-1,1) --  (-1,-7);
 \draw [ thick] (9,1) --  (9,-7); 
 \node at (5.4,-4.6) [left] {$\scriptstyle\sigma^{-1}$};
 \node at (8.4,-1.6) [left] {$\scriptstyle\sigma^{-1}$};
 \node at (0,-8) {$\scriptstyle 1$};
 \node at (1.5,-8) {$\scriptstyle \ldots$};
 \node at (4,-8) {$\scriptstyle i$};
 \node at (6.5,-8) {$\scriptstyle\ldots$};
 \node at (8,-8) {$\scriptstyle{n}$}; 
\end{tikzpicture}
   \caption{Braided homology.}\label{P:BrHom}
\end{figure}

\begin{proof}
Conditions~\eqref{E:PreCub} are easily verified by diagram manipulations, using ambient isotopy, the third Reidemeister move, and the definition of braided modules (Figures \ref{P:YBE}-\ref{P:BrMod}).
\end{proof}

Theorem~\ref{PR:PreCub} now yields a collection of graphically defined differentials for a braided set, with coefficients in braided modules. Quite remarkably, they admit many alternative interpretations of completely different nature. For instance, choosing trivial modules (Example~\ref{EX:Trivial}) as coefficients and the values $\alpha = 1, \beta = -1$ as parameters, one gets the complex of Carter--Elham\-dadi--Saito \cite{MR2128041}. It was described topologically in terms of certain $n$-dimensional cubes, inspired by the preferred squares approach to rack spaces, due to Fenn--Rourke--Sanderson \cite{FRS_BirackHom,RackHom}. The cochain version of that complex can also be regarded as the diagonal part of Eisermann's Yang--Baxter cohomology, which controls deformations of the braided set \cite{Eisermann,Eisermann2}. Recently the braided (co)homology received two complementary interpretations, boasting new applications: one based on Rosso's quantum shuffle machinery \cite{Lebed1,LebedIdempot}, and one in terms of a special differential graded bialgebra of Farinati and Garc{\'{\i}}a-Galofre \cite{FarinatiGalofre}.

\begin{rem}
Theorem~\ref{T:BrHom} is easily transportable from the category of sets to a general monoidal category. Moreover, the categorical duality yields a cohomological version of our constructions. Finally, degeneracies can be built out of a comultiplication on~$X$ compatible with the braiding~$\sigma$. Details on these and other related points can be found in~\cite{Lebed1}.
\end{rem}

\section{Birack homology}\label{S:BirackHom} 

Recall that a \textit{birack} is a braided set whose braiding $(a,b) \mapsto (\prescript{a}{}{b},a^b)$ is invertible and \textit{non-degenerate}, i.e., the maps $a \mapsto a^b$ and $a \mapsto \prescript{b}{}{a}$
are bijections $X \overset{\sim}{\to} X$ for all~$b \in X$. 

\begin{notation}\label{N:sideways}
The inverses of the maps $a \mapsto a^b$ and $a \mapsto \prescript{b}{}{a}$ are denoted by $a \mapsto a^{\tilde{b}}$ and $a \mapsto \prescript{\tilde{b}}{}{a}$ respectively. We also use the notations
\begin{align*}
b \wdot a &= a^{\tilde{b}}, & a \cdot b &= \prescript{b \wdot a}{}{b}.
\end{align*}
\end{notation}

A homology theory for biracks was developed in~\cite{FRS_BirackHom,BirackHom}
as follows:

\begin{thm}\label{T:Birack}
Let $(X,\,\sigma)$ be a birack.
    \begin{enumerate}
        \item The maps $X^{\times n}\to X^{\times (n-1)}$ given by
            \begin{align*}
                d_i &\colon (x_1, \ldots, x_n) \mapsto (x_i \wdot  x_1, \ldots, x_i \wdot  x_{i-1}, x_i \cdot x_{i+1}, \ldots, x_i \cdot  x_n),\\
                d'_i & \colon (x_1, \ldots, x_n) \mapsto (x_1, \ldots, x_{i-1}, x_{i+1}, \ldots, x_n), 
            \end{align*}
            form a pre-cubical structure. 
        \item 
            The assertion remains true with the maps~$d_i$ replaced with
            \begin{align*}
                d^\star_i &\colon (x_1, \ldots, x_n) \mapsto (x_i \cdot x_1, \ldots, x_i \cdot x_{i-1}, x_i \wdot  x_{i+1}, \ldots, x_i \wdot   x_n).
            \end{align*}
    \end{enumerate}
    If the relation $a \cdot a = a \wdot a$ holds for all $a \in X$, then the maps
            \begin{align*}
                s_i &\colon (x_1,\ldots, x_n) \mapsto (x_1,\ldots, x_{i-1}, x_i, x_i, x_{i+1},\ldots, x_n)            			\end{align*}       
    enrich any of the two pre-cubical structures above into a cubical one.
\end{thm}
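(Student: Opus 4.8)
The plan is to verify that the duplication maps $s_i$ satisfy all the degeneracy axioms of Definition~\ref{D:cubical} relative to the two boundary families, and to pin down which of \eqref{E:WeakCub3} and \eqref{E:WeakCub3'} holds for which boundary. Since $s_i$ merely inserts a second copy of the $i$-th entry, its interactions split into two qualitatively different groups: those with the deletion boundaries $d'_i$, which are purely combinatorial, and those with the braided boundaries $d_i$ (or $d^\star_i$), where the birack operations of Notation~\ref{N:sideways} actually enter. I would treat the two groups separately, and note at the outset that the target is the enriched structure of Definition~\ref{D:cubical}; concretely I expect to obtain a \emph{semi-strong skew} cubical one.

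First I would dispose of the deletion boundaries. Since $d'_i$ erases the $i$-th coordinate and $s_j$ duplicates the $j$-th, the identities \eqref{E:WeakCub} and \eqref{E:WeakCub2} for the pair $(d'_i, s_j)$ are the standard reindexing identities for insertion/deletion of coordinates, checked by inspection. Moreover, deleting either of the two twin coordinates produced by $s_i$ returns the original tuple, so $d'_i s_i = d'_{i+1} s_i = \Id$; this is exactly \eqref{E:WeakCub3'} for the deletion boundary, which therefore plays the role of the $\varepsilon=+$ face. The relation \eqref{E:WeakCubDeg}, involving only the $s_i$, is the classical degeneracy identity for duplication maps and is again immediate.

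Next I would turn to the braided boundary $d_i$, whose effect is to remove the $i$-th strand while letting the entry $x_i$ act (via $\wdot$ on the left, via $\cdot$ on the right) on all remaining entries. For $i<j$ or $i>j+1$ the acting strand $x_i$ and the duplicated strand are distinct, so both copies of the duplicated entry are acted upon identically and stay adjacent; computing both sides of \eqref{E:WeakCub} and \eqref{E:WeakCub2} then shows they agree after the appropriate shift of indices. Graphically this is transparent: duplicating one strand and then pulling out a different strand yields the same diagram as pulling out the strand first and then duplicating its (already modified) neighbour. The same computation applies verbatim to $d^\star_i$, with $\cdot$ and $\wdot$ interchanged, which settles the second of the two pre-cubical structures.

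The crux---and the only place the hypothesis is used---is relation \eqref{E:WeakCub3} for the braided boundary. Here I would compute $d_i s_i$ and $d_{i+1} s_i$ directly: both tuples read $(x_i \wdot x_1, \ldots, x_i \wdot x_{i-1}, \,?\,, x_i \cdot x_{i+1}, \ldots, x_i \cdot x_n)$ and differ only in the middle entry, which is $x_i \cdot x_i$ for $d_i s_i$ and $x_i \wdot x_i$ for $d_{i+1} s_i$ (swapped for $d^\star$). Hence \eqref{E:WeakCub3} holds precisely when $a \cdot a = a \wdot a$ for all $a$, the standing hypothesis; this boundary satisfies \eqref{E:WeakCub3} but not \eqref{E:WeakCub3'}, so it is the $\varepsilon=-$ face. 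The main obstacle is thus not conceptual but bookkeeping: tracking indices in \eqref{E:WeakCub}/\eqref{E:WeakCub2} through the double shift caused by one duplication followed by one deletion. Assembling the pieces, the enriched data $(C_n, d_i, d'_i, s_i)$---and likewise $(C_n, d^\star_i, d'_i, s_i)$---satisfy \eqref{E:WeakCub}, \eqref{E:WeakCub2}, \eqref{E:WeakCub3}, \eqref{E:WeakCubDeg}, together with \eqref{E:WeakCub3'} for the deletion face, giving a semi-strong skew cubical structure in the sense of Definition~\ref{D:cubical}.
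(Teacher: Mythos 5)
Your verification of the degeneracy axioms is correct, and your sharper conclusion---that one obtains a \emph{semi-strong skew cubical} structure in which the deletion faces $d'_i$ play the role of the $d^+_i$ (satisfying \eqref{E:WeakCub3'}) while the braided faces $d_i$ satisfy only \eqref{E:WeakCub3}, precisely under the hypothesis $a\cdot a = a\wdot a$---is consistent with what the paper itself establishes later: Theorem~\ref{T:DegAndNormHom} with trivial coefficients recovers exactly these degeneracies, and Remark~\ref{R:CubSkewcubSimpl} confirms that such structures are not cubical in the classical sense. So the part of the theorem you do treat is handled correctly, and indeed more precisely than the theorem's own wording.

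However, there is a genuine gap: you never prove parts (1) and (2) of the statement, namely the pre-cubical relations \eqref{E:PreCub} for the families $(d_i,\,d'_i)$ and $(d^\star_i,\,d'_i)$. Your plan explicitly restricts itself to ``the degeneracy axioms,'' and the sentence ``which settles the second of the two pre-cubical structures'' only concerns how $s_i$ interacts with $d^\star_i$, not the relations among the boundaries themselves. Yet those relations are the substantive content of the theorem, and they are exactly where the birack axioms (the Yang--Baxter equation) enter: the mixed relations $d_i d'_j = d'_{j-1}d_i$ and $d'_i d_j = d_{j-1}d'_i$ are combinatorial, but $d_i d_j = d_{j-1} d_i$ for $i<j$ is not. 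For instance, for $n=3$ the relation $d_1 d_2 = d_1 d_1$ reads $(x_2\wdot x_1)\cdot(x_2\cdot x_3) = (x_1\cdot x_2)\cdot(x_1\cdot x_3)$, and the general case requires all three identities in the operations $\cdot$, $\wdot$ that encode the YBE (those recorded in Remark~\ref{R:LNDHom}). The paper proves this part diagrammatically---colors on stacked oriented circles, with $d_i$ realized as inflating and removing the $i$th circle and $d'_i$ as shrinking it, so that \eqref{E:PreCub} follows from the independence of the order of these operations---and refers to the literature for the direct algebraic verification. Your proposal would be complete only after supplying such a verification (algebraic or diagrammatic) of \eqref{E:PreCub}, for both boundary families.
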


A proof by straightforward verifications is given in~\cite{BirackHom}, whereas
\cite{FRS_BirackHom} treats only the chain complex $(X^{\times n} ,\, \partial^{(1, -1)}_n)$ from Theorem~\ref{PR:PreCub} using its 
topological realization. We propose here a diagrammatic interpretation of the
boundary maps from the theorem, which will be instrumental in subsequent sections.

First, observe that the invertibility and the non-degeneracy of~$\sigma$ allow one to propagate colors through a crossing not only from bottom to top, but also from top to bottom (this corresponds to the map~$\sigma^{-1}$), from right to left (this is the map $(a^b,b) \mapsto (\prescript{a}{}{b},a)$, or, in our notations, $(a,b) \mapsto (a\cdot b,b \wdot a)$), and from left to right (this is the map $(\prescript{a}{}{b},a) \mapsto (a^b,b)$). The right-to-left versions of~$\sigma$ and~$\sigma^{-1}$ are presented in Figure~\ref{P:Side}. 
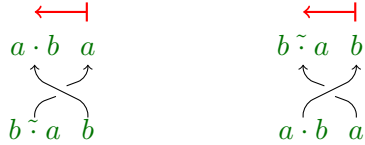
\begin{figure}[h]
\centering
\begin{tikzpicture}[scale=0.7]
\draw [rounded corners](0,0)--(0,0.25)--(0.4,0.4);
\draw [->,rounded corners](0.6,0.6)--(1,0.75)--(1,1);
\draw [->,rounded corners](1,0)--(1,0.25)--(0,0.75)--(0,1);
\node  at (0,-0.6) [mygreen,above]  {$b \wdot a$};
\node  at (1,-0.6) [mygreen,above]  {$b$};
\node  at (0,1) [mygreen,above] {$a \cdot b$};
\node  at (1,1) [mygreen,above] {$a$};
\draw [|->, red, thick]  (1,2) -- (0,2);
\node  at (4,2.5){ };
\end{tikzpicture}
\begin{tikzpicture}[scale=0.7]
\draw [->,rounded corners](0,0)--(0,0.25)--(1,0.75)--(1,1);
\draw [rounded corners](1,0)--(1,0.25)--(0.6,0.4);
\draw [->,rounded corners](0.4,0.6)--(0,0.75)--(0,1);
\node  at (0,-0.6) [mygreen,above] {$a \cdot b$};
\node  at (1,-0.6) [mygreen,above] {$a$};
\node  at (0,1) [mygreen,above] {$b \wdot a$};
\node  at (1,1) [mygreen,above] {$b$};
\draw [|->, red, thick]  (1,2) -- (0,2);
\end{tikzpicture}
\caption{Sideways maps.}\label{P:Side}
\end{figure}
These maps are fundamental in birack theory, and are often called \textit{sideways maps}. Note also that our treatment of crossings and their opposites validates the use of Reidemeister~$\mathrm{II}$ moves (Figure~\ref{P:BrInv}) in our diagrams. From now on strand orientations become relevant and are thus indicated in diagrams; all the strands in Figures~\ref{P:YBE}-\ref{P:BrHom} should be considered as oriented upwards.
          
\begin{figure}[h]
\centering
\begin{tikzpicture}[scale=0.55]
\draw [rounded corners](0,0)--(0,0.25)--(0.4,0.4);
\draw [rounded corners](0.6,0.6)--(1,0.75)--(1,1.25)--(0.6,1.4);
\draw [-<, rounded corners](0.4,1.6)--(0,1.75)--(0,2);
\draw [-<, rounded corners](1,0)--(1,0.25)--(0,0.75)--(0,1.25)--(1,1.75)--(1,2);
\node  at (2.3,1){\Large $\overset{\mathrm{RII}_1}{\rightleftarrows}$};
\node  at (2,0){ };
\end{tikzpicture}
\begin{tikzpicture}[scale=0.55]
\draw [->, rounded corners](0,0)--(0,2);
\draw [->, rounded corners](0.5,0)--(0.5,2);
\node  at (1.8,1){\Large $\overset{\mathrm{RII}_2}{\rightleftarrows}$};
\node  at (1,-0){ };
\end{tikzpicture}
\begin{tikzpicture}[scale=0.55]
\draw [rounded corners](1,0)--(1,0.25)--(0.6,0.4);
\draw [rounded corners](0.4,0.6)--(0,0.75)--(0,1.25)--(0.4,1.4);
\draw [-<, rounded corners](0.6,1.6)--(1,1.75)--(1,2);
\draw [-<, rounded corners](0,0)--(0,0.25)--(1,0.75)--(1,1.25)--(0,1.75)--(0,2);
\node  at (3,0){ };
\end{tikzpicture}
\begin{tikzpicture}[scale=0.55]
\draw [rounded corners](0,0)--(0,0.25)--(0.4,0.4);
\draw [rounded corners](0.6,0.6)--(1,0.75)--(1,1.25)--(0.6,1.4);
\draw [-<, rounded corners](0.4,1.6)--(0,1.75)--(0,2);
\draw [-<, rounded corners](1,2)--(1,1.75)--(0,1.25)--(0,0.75)--(1,0.25)--(1,0);
\node  at (2.3,1){\Large $\overset{\mathrm{RII}_3}{\rightleftarrows}$};
\node  at (2,0){ };
\end{tikzpicture}
\begin{tikzpicture}[scale=0.55]
\draw [->, rounded corners](0,0)--(0,2);
\draw [->, rounded corners](0.5,2)--(0.5,0);
\node  at (2.5,-0){ };
\end{tikzpicture}
\begin{tikzpicture}[scale=0.55]
\draw [<-, rounded corners](0,0)--(0,2);
\draw [<-, rounded corners](0.5,2)--(0.5,0);
\node  at (1.8,1){\Large $\overset{\mathrm{RII}_4}{\rightleftarrows}$};
\node  at (1,-0){ };
\end{tikzpicture}
\begin{tikzpicture}[scale=0.55]
\draw [>-, rounded corners](0,0)--(0,0.25)--(0.4,0.4);
\draw [rounded corners](0.6,0.6)--(1,0.75)--(1,1.25)--(0.6,1.4);
\draw [rounded corners](0.4,1.6)--(0,1.75)--(0,2);
\draw [>-, rounded corners](1,2)--(1,1.75)--(0,1.25)--(0,0.75)--(1,0.25)--(1,0);
\node  at (1,0){ };
\end{tikzpicture}
\caption{Reidemeister~$\mathrm{II}$ moves.}\label{P:BrInv}
\end{figure}
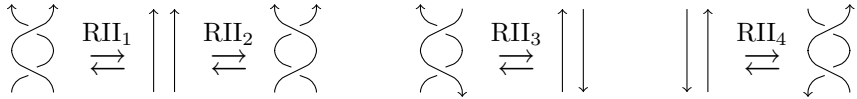

\newsavebox{\mybox}
\sbox{\mybox}{
\begin{tikzpicture}[scale=0.25,>=latex]
 \draw [->] (0,9) arc [radius=4, start angle=90, end angle= 450]; 
 \draw [line width=4pt,white] (0,2.5) circle [radius=4]; 
 \draw [->] (0,6.5) arc [radius=4, start angle=90, end angle= 450]; 
 \draw [->,red,rounded corners=15,dotted] (8.5,8) -- (8.5,10) -- (-4.5,10) -- (-4.5,-7.5) -- (8.5,-7.5) -- (8.5,8);
 \draw [line width=4pt,white] (0,-2.5) circle [radius=4]; 
 \draw [->] (0,1.5) arc [radius=4, start angle=90, end angle= 450];
 \node at (3.5,5) [right] {$\scriptstyle{{\color{red}x_3} \wdot x_1}$}; 
 \node at (3.5,2.5) [right] {$\scriptstyle{{\color{red}x_3} \wdot x_2}$}; 
 \node at (8.5,-6) [red,left] {$\scriptstyle{x_3}$}; 
 \node at (3.5,-2.5) [right] {$\scriptstyle{{\color{red}x_3} \cdot x_4}$};   
\end{tikzpicture}
}

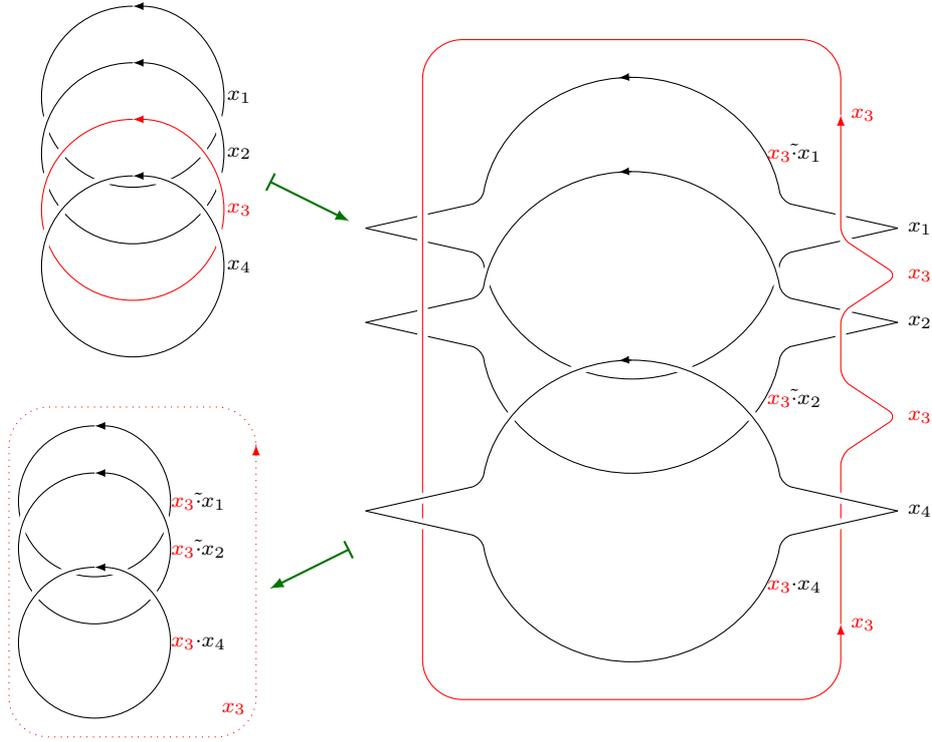
\begin{figure}[h]
\centering
\begin{tikzpicture}[scale=0.3,>=latex]
 \draw [->] (0,9) arc [radius=4, start angle=90, end angle= 450]; 
 \draw [line width=4pt,white] (0,2.5) circle [radius=4]; 
 \draw [->] (0,6.5) arc [radius=4, start angle=90, end angle= 450]; 
 \draw [line width=4pt,white] (0,0) circle [radius=4]; 
 \draw [->,red] (0,4) arc [radius=4, start angle=90, end angle= 450];
 \draw [line width=4pt,white] (0,-2.5) circle [radius=4]; 
 \draw [->] (0,1.5) arc [radius=4, start angle=90, end angle= 450];
 \node at (3.7,5) [right] {$\scriptstyle{x_1}$}; 
 \node at (3.7,2.5) [right] {$\scriptstyle{x_2}$}; 
 \node at (3.7,0) [red,right] {$\scriptstyle{x_3}$}; 
 \node at (3.7,-2.5) [right] {$\scriptstyle{x_4}$};    
 \draw [|->, mygreen, thick] (6,1.25)--(9.5,-0.5);
 \draw [|->, mygreen, thick] (9.5,-15)--(6,-16.75);
 \node at (0,-16) {\usebox{\mybox}}; 
\end{tikzpicture}
\begin{tikzpicture}[scale=0.5,>=latex]
 \draw [->] (0,9) arc [radius=4, start angle=90, end angle= 95]; 
 \path [draw, rounded corners] (0,9) arc (90:170:4) -- (-7,5);
 \path [draw, rounded corners] (0,1) arc (270:190:4) -- (-7,5); 
 \path [draw, rounded corners] (0,1) arc (270:350:4) -- (7,5); 
 \path [draw, rounded corners] (0,9) arc (90:10:4) -- (7,5);  
 \draw [line width=4pt,white] (0,2.5) circle [radius=4]; 
 \draw [->] (0,6.5) arc [radius=4, start angle=90, end angle= 95]; 
 \path [draw, rounded corners] (0,6.5) arc (90:170:4) -- (-7,2.5);
 \path [draw, rounded corners] (0,-1.5) arc (270:190:4) -- (-7,2.5); 
 \path [draw, rounded corners] (0,-1.5) arc (270:350:4) -- (7,2.5); 
 \path [draw, rounded corners] (0,6.5) arc (90:10:4) -- (7,2.5); 
 \draw [line width=4pt,white,rounded corners=10] (5.5,8) -- (5.5,10) -- (-5.5,10) -- (-5.5,0);
 \draw [line width=4pt,white,rounded corners] (5.5,-5.5) -- (5.5,-1) -- (7,0) -- (5.5,1) -- (5.5,2.75) -- (7,3.75) -- (5.5,4.75) -- (5.5,8);   
 \draw [->,red,rounded corners=15] (5.5,8) -- (5.5,10) -- (-5.5,10) -- (-5.5,-7.5) -- (5.5,-7.5) -- (5.5,-5.5);
 \draw [->,red,rounded corners] (5.5,-5.5) -- (5.5,-1) -- (7,0) -- (5.5,1) -- (5.5,2.75) -- (7,3.75) -- (5.5,4.75) -- (5.5,8);  
 \path [draw,line width=4pt,white, rounded corners] (0,1.5) arc (90:170:4) -- (-7,-2.5);
 \path [draw,line width=4pt,white, rounded corners] (0,-6.5) arc (270:190:4) -- (-7,-2.5); 
 \path [draw,line width=4pt,white, rounded corners] (0,-6.5) arc (270:350:4) -- (7,-2.5); 
 \path [draw,line width=4pt,white, rounded corners] (0,1.5) arc (90:10:4) -- (7,-2.5); 
 \draw [->] (0,1.5) arc [radius=4, start angle=90, end angle= 95]; 
 \path [draw, rounded corners] (0,1.5) arc (90:170:4) -- (-7,-2.5);
 \path [draw, rounded corners] (0,-6.5) arc (270:190:4) -- (-7,-2.5); 
 \path [draw, rounded corners] (0,-6.5) arc (270:350:4) -- (7,-2.5); 
 \path [draw, rounded corners] (0,1.5) arc (90:10:4) -- (7,-2.5); 
 \node at (7,5) [right] {$\scriptstyle{x_1}$}; 
 \node at (3.3,7) [right] {$\scriptstyle{{\color{red}x_3} \wdot x_1}$};  
 \node at (7,2.5) [right] {$\scriptstyle{x_2}$}; 
 \node at (3.3,0.5) [right] {$\scriptstyle{{\color{red}x_3} \wdot x_2}$};   
 \node at (7,0) [red,right] {$\scriptstyle{x_3}$}; 
 \node at (7,3.75) [red,right] {$\scriptstyle{x_3}$}; 
 \node at (5.5,8) [red,right] {$\scriptstyle{x_3}$};  
 \node at (5.5,-5.5) [red,right] {$\scriptstyle{x_3}$}; 
 \node at (7,-2.5) [right] {$\scriptstyle{x_4}$};
 \node at (3.3,-4.5) [right] {$\scriptstyle{{\color{red}x_3} \cdot x_4}$};   
 \node at (3.3,-8.5) {};     
\end{tikzpicture}
   \caption{A diagrammatic version of the boundary map~$d_i$: the $i$th circle inflates and then disappears. Here $n=4$, $i=3$.}\label{P:Birack_di}
\end{figure}

Now, consider the upper left diagram from Figure~\ref{P:Birack_di}. The colors $x_1, \ldots$ of its rightmost arcs (indicated in the diagram) can be propagated to the left, and uniquely determine the colors of all the remaining arcs. Probably the easiest way to see this is to start with $n$ horizontally aligned disjoint circles of the same size, colored by $x_1, \ldots$, and then to continuously bring them closer in the vertical direction, until they are piled up as on this diagram; during the stacking procedure some local Reidemeister~$\mathrm{II}$ moves occur, provoking local color changes but preserving the rightmost colors. Imagine then the $i$th circle inflating until it encloses the other ones, and then disappearing, as shown in the figure (where the circles are deformed for the sake of readability). R$\mathrm{II}$ and R$\mathrm{III}$ moves with induced local color changes happen during the inflation. From the figure one sees that the new colors of the rightmost arcs of the remaining circles yield the value of $d_i(x_1, \ldots)$.

The value of $d'_i(x_1, \ldots)$ is obtained by a similar procedure, except that the $i$th circle shrinks into the area which is interior to all the circles (Figure~\ref{P:Birack_di'}); the colors of the rightmost arcs are not affected by this procedure. Observe that changing the order of shrinking and/or inflation of different circles does not modify the colors in the resulting diagram. This implies relations~\eqref{E:PreCub}, and hence Theorem~\ref{T:Birack}. To switch from the~$d_i$ to the~$d^\star_i$, one should replace all the crossings in our diagrams with their opposites.

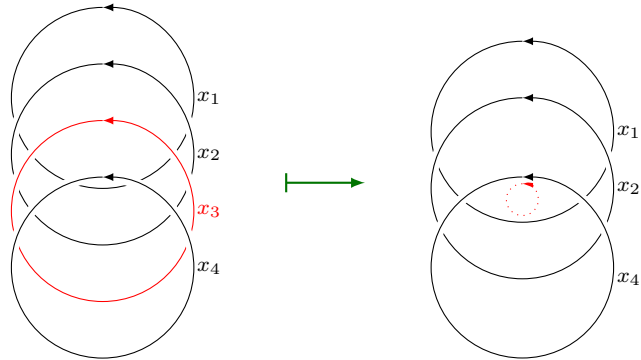
\begin{figure}[h]
\centering
\begin{tikzpicture}[scale=0.3,>=latex]
 \draw [->] (0,9) arc [radius=4, start angle=90, end angle= 450]; 
 \draw [line width=4pt,white] (0,2.5) circle [radius=4]; 
 \draw [->] (0,6.5) arc [radius=4, start angle=90, end angle= 450]; 
 \draw [line width=4pt,white] (0,0) circle [radius=4]; 
 \draw [->,red] (0,4) arc [radius=4, start angle=90, end angle= 450];
 \draw [line width=4pt,white] (0,-2.5) circle [radius=4]; 
 \draw [->] (0,1.5) arc [radius=4, start angle=90, end angle= 450];
 \node at (3.7,5) [right] {$\scriptstyle{x_1}$}; 
 \node at (3.7,2.5) [right] {$\scriptstyle{x_2}$}; 
 \node at (3.7,0) [red,right] {$\scriptstyle{x_3}$}; 
 \node at (3.7,-2.5) [right] {$\scriptstyle{x_4}$};    
 \draw [|->, mygreen, thick] (8,1.25)--++(3.5,0);
\end{tikzpicture}
\begin{tikzpicture}[scale=0.3,>=latex]
 \node at (-6,0) { }; 
 \draw [->] (0,9) arc [radius=4, start angle=90, end angle= 450]; 
 \draw [line width=4pt,white] (0,2.5) circle [radius=4]; 
 \draw [->] (0,6.5) arc [radius=4, start angle=90, end angle= 450]; 
 \draw [->,red,dotted] (0,2.7) arc [radius=0.7, start angle=90, end angle= 450];
 \draw [line width=4pt,white] (0,-1) circle [radius=4]; 
 \draw [->] (0,3) arc [radius=4, start angle=90, end angle= 450];
 \node at (3.7,5) [right] {$\scriptstyle{x_1}$}; 
 \node at (3.7,2.5) [right] {$\scriptstyle{x_2}$}; 
 \node at (3.7,-1.5) [right] {$\scriptstyle{x_4}$}; 
\end{tikzpicture}
   \caption{A diagrammatic version of the map~$d'_i$: the $i$th circle shrinks and then disappears.}\label{P:Birack_di'}
\end{figure}

\section{Associated shelves}\label{S:AssShelves}

This section contains a reminder from the zoology of braided sets. We
recall/introduce several algebraic structures (a shelf, a semigroup, and a
group) associated to every braiding, and capturing its properties. They will be
instrumental in further sections.

\begin{defn}\label{D:BraidedZoo}
A braiding $\sigma \colon X^{\times 2} \to X^{\times 2}, (a,b) \mapsto (\prescript{a}{}{b}, a^b)$ and the corresponding braided set are called
\begin{itemize}
\item \emph{left (or right) non-degenerate} if the map $a \mapsto a^b$ (respectively, $a \mapsto \prescript{b}{}{a}$) is a bijection $X \overset{\sim}{\to} X$;
\item \emph{non-degenerate} if both left and right non-degenerate;
\item \emph{involutive} if $\sigma\sigma = \Id$;
\item \emph{weakly R$\mathrm{I}$-compatible} if there exists a map $t \colon X \to X$ such that $\sigma(t(a),a) = (t(a),a)$ for all $a \in X$;
\item \emph{R$\mathrm{I}$-compatible} if weakly R$\mathrm{I}$-compatible with a bijective map~$t$.
\end{itemize}
\end{defn}

\begin{figure}[h]
\centering
\begin{tikzpicture}[scale=0.55]
\path [->,draw,rounded corners] (-1.1,1) arc (180:270:0.5) -- (-0.2,0.7) -- (0,1.1) -- (0,2); 
\path [draw,line width=4pt,white, rounded corners] (0,0) -- (0,0.9) -- (-0.2,1.3) -- (-0.6,1.5) arc (90:180:0.5) ; 
\path [draw,rounded corners] (0,0) -- (0,0.9) -- (-0.2,1.3) -- (-0.6,1.5) arc (90:180:0.5) ; 
\node  at (0.4,0.3){\color{mygreen} $x$};
\node  at (0.4,1.7){\color{mygreen} $x$};
\node  at (-1.9,1){\color{mygreen} $t(x)$};
\node  at (1.5,1){\Large $=$};
\end{tikzpicture}
\begin{tikzpicture}[scale=0.55]
\draw [->, rounded corners](0,0)--(0,2);
\node  at (0.4,0.3){\color{mygreen} $x$};
\node  at (0.4,1.7){\color{mygreen} $x$};
\node  at (1.5,1){\Large $=$};
\end{tikzpicture}
\begin{tikzpicture}[scale=0.55]
\path [draw,rounded corners] (0,0) -- (0,0.9) -- (0.2,1.3) -- (0.6,1.5) arc (90:0:0.5) ; 
\path [draw,line width=4pt,white, rounded corners] (1.1,1) arc (0:-90:0.5) -- (0.2,0.7) -- (0,1.1) -- (0,2); 
\path [->,draw,rounded corners] (1.1,1) arc (0:-90:0.5) -- (0.2,0.7) -- (0,1.1) -- (0,2); 
\node  at (-0.4,0.3){\color{mygreen} $x$};
\node  at (-0.4,1.7){\color{mygreen} $x$};
\node  at (2.3,1){\color{mygreen} $t^{-1}(x)$};
\end{tikzpicture}
\caption{Reidemeister~$\mathrm{I}$ move and the map~$t$.}\label{P:RI}
\end{figure}

Observe that the left (or right) non-degeneracy is equivalent to the sideways
map (or its right version) being well defined. Also note that the
R$\mathrm{I}$-compatibility is related to the color propagation through the
kink of a Reidemeister~$\mathrm{I}$ move (Figure~\ref{P:RI}), hence the name.

\begin{example}\label{EX:rack}
Recall that a \emph{shelf} is a set~$X$ with a self-distributive
operation~$\op$, in the sense of~\eqref{E:SD}. It gives rise to a braiding
$\sigma_{\op}(a,b) =  (b,a \op b)$, which is
\begin{itemize}
\item invertible if and only if the right translations $a \mapsto a \op b$ are bijective, i.e., $(X,\,\op)$ is a \emph{rack};
\item left non-degenerate if and only if $(X,\,\op)$ is a rack;
\item always right non-degenerate;
\item involutive if and only if the shelf is \emph{trivial}, i.e., $a \op b = a$ for all $a,b$;
\item (weakly) R$\mathrm{I}$-compatible if and only if one has $a \op a = a$ for all~$a$, implying $t = \Id_X$; in this case $(X,\,\op)$ is called a \emph{spindle}.
\end{itemize}
The notion of braided module over $(X,\, \sigma_{\op})$ recovers the classical notion of module over the shelf $(X,\,\op)$. Another braiding on~$X$ is defined by $\sigma'_{\op}(a,b) =  (b \op a,a)$. It should be thought of as the braiding~$\sigma_{\op}$ with the entries read from right to left. It has the same properties as~$\sigma_{\op}$, except that the right and left non-degeneracies change places.
\end{example}

\begin{example}\label{EX:birack}
A \emph{birack} can be seen as an invertible, left and right non-degenerate braided set. A birack---or, more generally, any left non-degenerate braided set---is weakly R$\mathrm{I}$-compatible if and only if one has  $a \cdot a = a \wdot a$ for all $a \in X$ (Notation~\ref{N:sideways}), and R$\mathrm{I}$-compatible if and only if the map $t \colon a \mapsto a \cdot a = a \wdot a$ is bijective (note that its injectivity follows from the left non-degeneracy, so for finite biracks weak and usual R$\mathrm{I}$-compatibility properties are equivalent).
\end{example}

\begin{example}\label{EX:CycleSet}
Recall that a \emph{cycle set} is a set~$X$ with an operation~$\cdot$ satisfying the cycle property~\eqref{E:Cyclic}, such that all the translations $a \mapsto b \cdot a$ admit inverses $a \mapsto  b \ast a$. Its associated braiding $\sigma_{\cdot}(a,b) = ( (b \ast a) \cdot b, b \ast a)$ is
\begin{itemize}
\item always involutive, left non-degenerate, and weakly R$\mathrm{I}$-compatible, with $t(a)= a \cdot a$;
\item right non-degenerate if and only if $(X,\,\cdot)$ is \emph{non-degenerate}, i.e., the squaring map $a \mapsto a \cdot a$ is bijective;
\item R$\mathrm{I}$-compatible if and only if $(X,\,\cdot)$ is non-degenerate.
\end{itemize}
\end{example}

\begin{example}\label{EX:monoid}
As noticed in~\cite{Lebed1}, any \emph{monoid}~$X$, with an associative operation $(a,b) \mapsto a \star b$ and a unit element $e \in X$, carries the following braiding:
$$\sigma_{\star} (a,b) = (e, a \star b).$$
Even better: the YBE for~$\sigma_{\star}$ is equivalent to the associativity of~$\star$, if one admits the unit property of~$e$. This braiding is
\begin{itemize}
\item almost never invertible, nor right non-degenerate, nor involutive, nor R$\mathrm{I}$-compatible;
\item left non-degenerate if and only if right translations are bijective, which holds for instance when $X$ is a group;
\item weakly R$\mathrm{I}$-compatible, with $t(a)=e$;
\item idempotent, in the sense of $\sigma_{\star}\sigma_{\star}=\sigma_{\star}$.
\end{itemize}
A module over our monoid is automatically a braided module over $(X,\, \sigma_{\star})$, with the same action.
\end{example}

We now show how to associate a shelf to any left non-degenerate braided set. 
Our result generalizes that of Soloviev~\cite{MR1809284}, see also~\cite[Prop. 5.4]{MR1994219}.
Recall the notations $b \wdot a$, $a \cdot b$ (Notation~\ref{N:sideways}) and the sideways map (Figure~\ref{P:Side}) for biracks, which still make sense in our more general context. Moreover, observe that the left non-degeneracy is sufficient for performing
\begin{itemize}
\item all the oriented versions of the R$\mathrm{II}$ move (Figure~\ref{P:BrInv}) in the direction~$\leftarrow$ of the equivalences~$\rightleftarrows$, and
\item the disentangling $\rightarrow$-directed moves R$\mathrm{II}_1$ and R$\mathrm{II}_3$ provided that for each strand, the colors of its lower and upper ends coincide.
\end{itemize}

\begin{defn}\label{D:AllowedRII}
These directed R$\mathrm{II}$ moves are called \textit{allowed}.
\end{defn}

\begin{pro}\label{PR:AssShelf}
\begin{enumerate}
    \item A left non-degenerate braided set $(X,\,\sigma)$ carries the
        following self-distributive operation (Figure~\ref{P:AssShelf}):
\begin{align*}
a \ops b = (b \cdot a)^b,
\end{align*}
\item It defines a rack structure if and only if~$\sigma$ is invertible.
\item The resulting shelf is trivial if and only if~$\sigma$ is involutive.
\item The following assertions are equivalent:
\begin{enumerate}
\item\label{I:a} The resulting shelf is a spindle.
\item\label{I:b} The braiding~$\sigma$ is weakly R$\mathrm{I}$-compatible.
\item\label{I:c} The braiding~$\sigma$ is weakly R$\mathrm{I}$-compatible with $t(a)=a\cdot a = a\wdot a$.
\end{enumerate}
\end{enumerate}
\end{pro}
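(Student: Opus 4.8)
The plan is to establish self-distributivity in~(1) diagrammatically, and to read off~(2)--(4) from a single algebraic identity. Reading a crossing sideways (legitimate by left non-degeneracy) gives $\sigma(a \wdot b,\, a) = (b \cdot a,\, b)$, and a further crossing gives $\sigma(b \cdot a,\, b) = (\prescript{b\cdot a}{}{b},\, (b\cdot a)^b)$; together they yield the key relation
\begin{equation}\label{E:sigma2ops}
\sigma^2(a \wdot b,\, a) = (\prescript{b \cdot a}{}{b},\; a \ops b).
\end{equation}
Thus $a \ops b$ is the colour produced on the right-hand output of a two-crossing gadget whose right strand carries $a$ at the bottom and whose intermediate right colour is prescribed to be~$b$ (possible and unique by left non-degeneracy); this is Figure~\ref{P:AssShelf}. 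To prove $(a \ops b) \ops c = (a \ops c) \ops (b \ops c)$ I would stack three such gadgets, so that the left-hand side propagates the colour $a$ while passing it first through~$b$ and then through~$c$; one application of the R$\mathrm{III}$ move (the YBE, Figure~\ref{P:YBE}) together with the allowed R$\mathrm{II}$ moves of Definition~\ref{D:AllowedRII} transforms this diagram into the one computing the right-hand side. Equivalently, one may verify that $(a,b) \mapsto (b, a\ops b)$ satisfies the YBE and invoke the equivalence, recalled in the Introduction, between that braid relation and self-distributivity of~$\ops$.

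For~(2), since $x \mapsto x^b$ is bijective, the right translation $a \mapsto a \ops b = (b \cdot a)^b$ is bijective if and only if $a \mapsto b \cdot a$ is. I would reparametrise~$\sigma$: writing $d = a^b$, the assignment $\Phi \colon (d,b) \mapsto (b \wdot d,\, b)$ is a bijection of $X^{\times 2}$, and the composite $\sigma \circ \Phi$ equals $(d,b) \mapsto (d \cdot b,\, d)$. Hence $\sigma$ is invertible if and only if $b \mapsto d \cdot b$ is bijective for every~$d$, which (after relabelling) is exactly the condition that all right translations of~$\ops$ be bijective, i.e. that $\ops$ be a rack.

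For~(3), triviality of the shelf means $a \ops b = a$ for all $a,b$, equivalently $b \cdot a = b \wdot a$ after applying $x \mapsto x^{\tilde b}$. If $\sigma$ is involutive, \eqref{E:sigma2ops} immediately gives $a \ops b = a$. Conversely, under triviality the second coordinate of~\eqref{E:sigma2ops} is~$a$, while its first coordinate is $\prescript{b \cdot a}{}{b} = \prescript{b \wdot a}{}{b} = a \cdot b = a \wdot b$ (using triviality in the symmetric form $a \cdot b = a \wdot b$); since $(a \wdot b,\, a)$ ranges over all of $X^{\times 2}$, we get $\sigma^2 = \Id$. For~(4), all three conditions reduce to the pointwise identity $a \cdot a = a \wdot a$: indeed $a \ops a = (a \cdot a)^a = a$ is equivalent to $a \cdot a = a \wdot a$, giving~(a); the equivalence (a)$\Leftrightarrow$(b) is Example~\ref{EX:birack}; and for~(c) one notes that $\sigma(t(a),a) = (t(a),a)$ forces $t(a) = a \wdot a$ through $t(a)^a = a$, whence weak R$\mathrm{I}$-compatibility reads $a \cdot a = \prescript{a\wdot a}{}{a} = a \wdot a = t(a)$.

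The main obstacle is part~(1): fixing the correct diagram for~$\ops$ and carrying out the R$\mathrm{III}$/allowed-R$\mathrm{II}$ reduction while keeping track of the forced intermediate colours (a direct algebraic check is possible but cumbersome). Once this is done, parts~(2)--(4) are routine consequences of~\eqref{E:sigma2ops} and of the definitions.
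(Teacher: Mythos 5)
Your proposal is correct and takes essentially the same approach as the paper: your key identity $\sigma^2(a \wdot b,\, a) = (\prescript{b\cdot a}{}{b},\, a \ops b)$ is precisely the algebraic reading of Figure~\ref{P:AssShelf}, your arguments for parts (2)--(4) reproduce the paper's (your conjugation by $\Phi$ is just a cleaner packaging of the paper's two-way argument that invertibility of~$\sigma$ amounts to bijectivity of $a \mapsto b \cdot a$, and your forcing computation $t(a)=a\wdot a=a\cdot a$ is the paper's proof of \ref{I:a}$\Rightarrow$\ref{I:c}), while your plan for part (1) is exactly the strategy of the paper's Figure~\ref{P:AssShelfProof}, where the $b$-colored circle is passed through the $c$-colored one using allowed R$\mathrm{II}$ moves and nicely oriented R$\mathrm{III}$ moves. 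Two caveats, both cosmetic: that reduction requires several R$\mathrm{III}$ moves rather than ``one application,'' and your alternative suggestion to ``verify the YBE for $(a,b)\mapsto(b,a\ops b)$'' is not a shortcut, since by the equivalence you cite that YBE \emph{is} the self-distributivity to be proved.
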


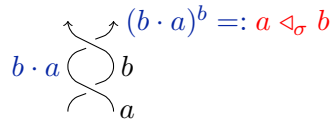
\begin{figure}[h]
\begin{tikzpicture}[scale=0.6]
\draw [rounded corners](0,0)--(0,0.25)--(0.4,0.4);
\draw [->, rounded corners](0.6,0.6)--(1,0.75)--(1,1.25)--(0,1.75)--(0,2);
\draw [rounded corners](1,0)--(1,0.25)--(0,0.75)--(0,1.25)--(0.4,1.4);
\draw [->, rounded corners](0.6,1.6)--(1,1.75)--(1,2);
\node  at (1.3,0){$a$};
\node  at (1.3,1){$b$};
\node  at (3.5,2){$\color{myblue} (b \cdot a)^b =: \color{red} a \ops b$};
\node  at (-0.7,1){$\color{myblue} b \cdot a$};
\end{tikzpicture}
\caption{The colors $a,b$ are propagated to the left, and then upwards; $a \ops
b$ is defined as the induced upper right color.}\label{P:AssShelf}
\end{figure} 

\begin{defn}
    The structure from the proposition is called the \emph{associated
    shelf/rack structure} for $(X,\,\sigma)$.
\end{defn}

The associated shelf operation can also be interpreted in terms of colored
circles (in the spirit of Figures~\ref{P:Birack_di}-\ref{P:Birack_di'}), as
shown in Figure~\ref{P:AssShelfCircles}. Note that this passing-through
procedure involves only allowed R$\mathrm{II}$ moves.

\begin{figure}[h]
\labellist
\small\hair 2pt
\pinlabel $b$ at 133 240
\pinlabel $a$ at 136 62

\pinlabel $b$ at 327 240
\pinlabel $a$ at 328 144
\pinlabel ${\color{myblue} a\cdot b}$ at 256 223

\pinlabel ${\color{myblue} a\cdot b}$ at 453 258
\pinlabel $b$ at 538 190

\pinlabel $b$ at 730 190
\pinlabel ${\color{myblue} \left( a\cdot b\right) ^b}$ at 658 370
\pinlabel ${\color{myblue} a\cdot b}$ at 658 208

\pinlabel $b$ at 924 190
\pinlabel ${\color{red} a\ops b}$ at 944 399
\pinlabel ${\color{myblue} =}$ at 944 368
\pinlabel ${\color{myblue} \left( a\cdot b\right) ^b}$ at 944 343

\pinlabel ${\color{mygreen} \mapsto}$ at 160 215
\pinlabel ${\color{mygreen} \mapsto}$ at 351 215
\pinlabel ${\color{mygreen} \mapsto}$ at 566 215
\pinlabel ${\color{mygreen} \mapsto}$ at 759 215

\endlabellist
\centering 
\hspace{5pt}
\includegraphics[scale=0.35]{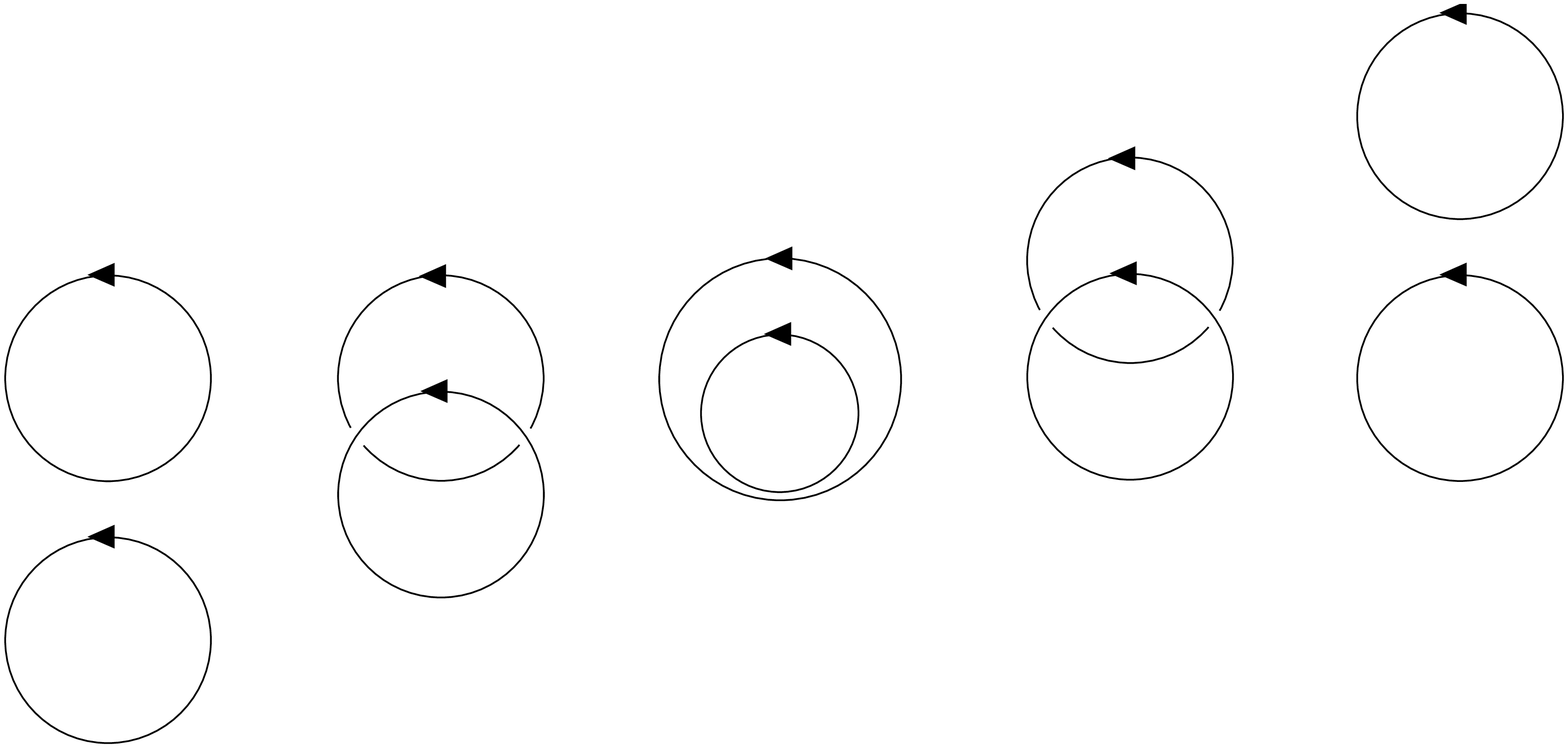}
\caption{The $a$-colored circle passes through the $b$-colored one; its color changes to~$a \ops b$.}\label{P:AssShelfCircles}
\end{figure} 

\begin{proof}[Proof of Proposition~\ref{PR:AssShelf}]
\begin{enumerate}
\item Figure~\ref{P:AssShelfProof} contains a diagrammatic proof of the self-distributivity of~$\ops$; only  allowed R$\mathrm{II}$ moves and nicely oriented R$\mathrm{III}$ moves are used. Certainly, algebraic manipulations also do the trick.
\begin{figure}[h]
\vspace{10pt}

\labellist
\small\hair 2pt

\pinlabel $c$ at 127 419
\pinlabel $b$ at 127 264
\pinlabel $a$ at 127 109

\pinlabel ${\color{myblue} \left( a\ops b \right)\ops c }$ at 265 450
\pinlabel $c$ at 332 309
\pinlabel ${\color{myblue} a \ops b}$ at 320 235
\pinlabel $b$ at 332 128
\pinlabel $a$ at 297 14

\pinlabel ${\color{myblue} \left( a \ops b \right)\ops c }$ at 462 450
\pinlabel ${\color{myblue} b \ops c}$ at 541 315
\pinlabel $c$ at 525 225
\pinlabel $b$ at 523 128
\pinlabel $a$ at 494 14

\pinlabel ${\color{red} \left( a\ops b \right)\ops c =}$ at 705 475
\pinlabel ${\color{red} \left( a\ops c \right)\ops \left( b \ops c \right)}$ at 705 445
\pinlabel ${\color{myblue} b \ops c}$ at 742 309
\pinlabel ${\color{myblue} a \ops c}$ at 710 225
\pinlabel $c$ at 724 128
\pinlabel $a$ at 689 14

\pinlabel ${\color{mygreen} \mapsto}$ at 160 215
\pinlabel ${\color{mygreen} \mapsto}$ at 362 215
\pinlabel ${\color{mygreen} \mapsto}$ at 566 215

\endlabellist
\centering 
\hspace{5pt}
\includegraphics[scale=0.4]{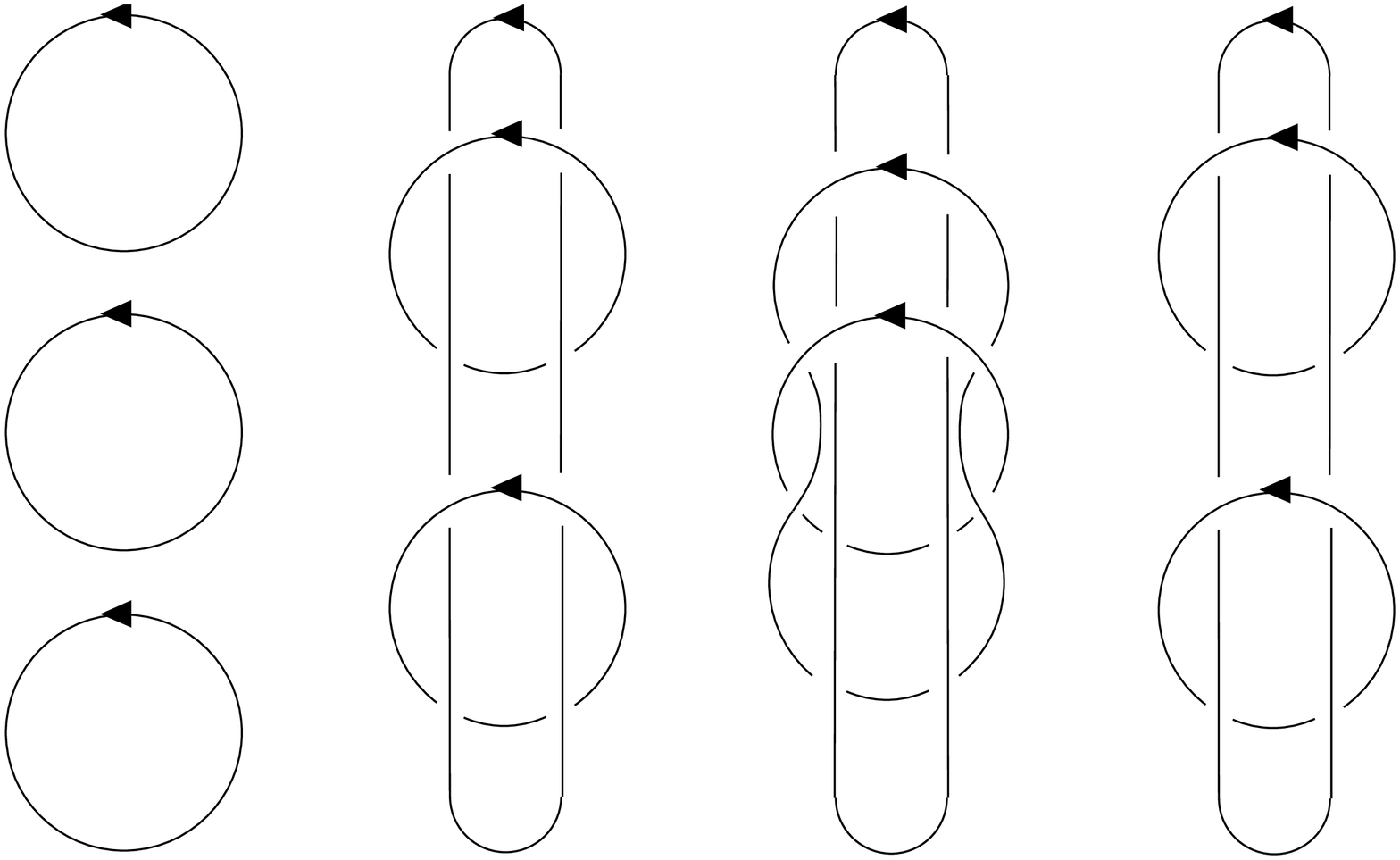}
\caption{A proof of $(a \ops b) \ops c = (a \ops c) \ops (b \ops c)$. The $b$-colored circle passes through the $c$-colored one, while both are encircling the elongated $a$-colored circle.}\label{P:AssShelfProof}
\end{figure} 

\item Suppose that~$\sigma$ is invertible. This implies that, for a given $b\in X$, the map $a \mapsto b \cdot a$ is bijective. The map $a \mapsto a^b$ being bijective by left non-degeneracy, so is $a \mapsto a \ops b = (b \cdot a)^b$. In the other direction, the bijectivity of $a \mapsto a \ops b$ and of $a \mapsto a^b$ yields the bijectivity of $a \mapsto b \cdot a$. Then $\sigma^{-1}$ is defined by $\sigma^{-1}(c,b)=(a \wdot b,a)$ where~$a$ is the unique element of~$X$ satisfying $b \cdot a = c$.

\item Suppose that $a \ops b = a$ for all $a,b$. Then the color of the upper left arc in Figure~\ref{P:AssShelf} has to be $a\cdot b$. Further, the color of the middle left arc is $b \cdot a$ when determined from the lower crossing, and $b \wdot a$ when determined from the upper one. Hence the operations~$\cdot$ and~$\wdot$ coincide, and from the figure one reads $\sigma^2(a \wdot b,a) = (a \wdot b,a)$, for all $a,b$. Since the map $b \mapsto a \wdot b = b^{\tilde{a}}$ is bijective, one obtains $ \sigma^2 = \Id$. The opposite direction is obvious.

\item Implications \ref{I:c} $\Rightarrow$ \ref{I:b} $\Rightarrow$ \ref{I:a} are clear. Let us prove \ref{I:a} $\Rightarrow$ \ref{I:c}. Relation $a \ops a = a$ means $(a \cdot a)^a = a$, or equivalently, using the left non-degeneracy, $a \cdot a = a^{\tilde{a}} = a \wdot a$, which implies $\sigma(a \cdot a, a) = (a \cdot a, a)$ as desired.
 \qedhere
\end{enumerate}
\end{proof}

\begin{example}
The self-distributive operation associated to the braiding~$\sigma_{\op}$ or~$\sigma'_{\op}$ for a shelf $(X,\,\op)$ is simply its original operation~$\op$.
\end{example}

\begin{example}
For the braiding coming from a right-invertible monoid
(Example~\ref{EX:monoid}), the associated operation is $a \ops b = b$ for all
$a,b$.
\end{example}

We finish this section by recalling how to associate a (semi)group to
a braiding. This construction appeared in~\cite{MR1722951} and~\cite{MR1637256}, and since then
became a key tool in the study of the YBE.

\begin{defn}\label{D:StructureSemiGroup}
The \emph{structure (semi)group of a braided set} $(X,\, \sigma)$ is the
(semi)group $G_{(X, \sigma)}$ (respectively, $SG_{(X, \sigma)}$), defined by
its generators $a \in X$ and relations $ab = \prescript{a}{}{b} a^b$, for all $a,b \in X$.
The \emph{structure (semi)group of a shelf} $(X,\, \op)$, denoted by $(S)G_{(X,
\op)}$, is simply the structure (semi)group of the associated braided set
$(X,\, \sigma'_{\op})$. Similarly, for a cycle set $(X,\, \cdot)$, one puts
$(S)G_{(X, \cdot)} = (S)G_{(X, \sigma_{\cdot})}$.
\end{defn}

The importance of these constructions comes, among others, from the following elementary property:

\begin{lem}\label{L:StructureSemiGroup}
For a right $(X,\, \sigma)$-module $(M,\, \cdot)$, the assignment $(m,a) \mapsto m \cdot a$, $a\in X$, $m \in M$ extends to a unique $SG_{(X, \sigma)}$-module structure on~$M$. If~$X$ acts on~$M$ by bijections (such modules are called \emph{solid}), then this assignment also defines a unique $G_{(X, \sigma)}$-module structure. This yields a bijection between (solid) $(X,\, \sigma)$-module and $SG_{(X, \sigma)}$- (respectively, $G_{(X, \sigma)}$-) module structures on~$M$. Analogous properties hold true for right modules.
\end{lem}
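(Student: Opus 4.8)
The plan is to prove Lemma~\ref{L:StructureSemiGroup} by exhibiting the claimed bijection as a consequence of the universal property of the structure (semi)group, reformulating the braided module axiom as exactly the defining relations of $SG_{(X,\sigma)}$. First I would recall that a right $(X,\sigma)$-module structure on $M$ is a map $\rho \colon M \times X \to M$, $(m,a) \mapsto m \cdot a$, satisfying the compatibility $(m \cdot a) \cdot b = (m \cdot \prescript{a}{}{b}) \cdot a^b$ for all $m \in M$, $a,b \in X$. An $SG_{(X,\sigma)}$-module structure on $M$ is, by definition of a semigroup action, a semigroup homomorphism from $SG_{(X,\sigma)}$ into the monoid $\End(M)$ of all maps $M \to M$; and the semigroup $SG_{(X,\sigma)}$ is presented by generators $a \in X$ and relations $ab = \prescript{a}{}{b}\,a^b$. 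The central observation is that specifying a map $X \to \End(M)$, $a \mapsto (m \mapsto m \cdot a)$, sending each generator to right-translation is the same datum as the map $\rho$, and that this map respects the defining relations precisely when the module compatibility holds.

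The key steps, in order, are as follows. I would set $\phi_a \colon M \to M$, $\phi_a(m) = m \cdot a$, for each $a \in X$, so that $\rho$ is equivalent to the assignment $a \mapsto \phi_a$. By the universal property of the free semigroup on $X$, this extends uniquely to a semigroup homomorphism from the free semigroup into $\End(M)$ (composition being written so that a word $a_1 a_2 \cdots a_k$ acts as $\phi_{a_k} \circ \cdots \circ \phi_{a_1}$, i.e.\ as iterated right action, which I must fix consistently). This homomorphism factors through $SG_{(X,\sigma)}$ if and only if it kills the defining relations, i.e.\ if and only if the action of $ab$ equals the action of $\prescript{a}{}{b}\,a^b$ for all $a,b$. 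Spelling this out on an element $m$, the relation $ab = \prescript{a}{}{b}\,a^b$ becomes exactly $(m \cdot a) \cdot b = (m \cdot \prescript{a}{}{b}) \cdot a^b$, which is the braided module axiom. Thus the assignment extends to a unique $SG_{(X,\sigma)}$-module structure, proving the first sentence, and the correspondence $\rho \leftrightarrow (SG\text{-action})$ is manifestly bijective since each determines the other on generators.

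For the group statement, I would add the hypothesis that $X$ acts by bijections, i.e.\ each $\phi_a$ is invertible in $\End(M)$. Then the homomorphism $SG_{(X,\sigma)} \to \End(M)$ lands in the group $\Sym(M)$ of bijections; invoking the universal property of $G_{(X,\sigma)}$ (which is the group with the same presentation, equivalently the universal enveloping group of $SG_{(X,\sigma)}$) together with the fact that a semigroup homomorphism into a group factors uniquely through the enveloping group, I get a unique $G_{(X,\sigma)}$-module structure. This again yields the desired bijection between solid $(X,\sigma)$-module structures and $G_{(X,\sigma)}$-module structures on $M$. The analogous assertions for left modules follow by the mirror-image argument, using the left module axiom $a \cdot (b \cdot n) = \prescript{a}{}{b} \cdot (a^b \cdot n)$ and the same presentation read on the other side; I would simply remark that the proof is symmetric rather than repeat it.

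The main obstacle I anticipate is purely bookkeeping rather than conceptual: getting the composition order and the left/right conventions to match so that the defining relation $ab = \prescript{a}{}{b}\,a^b$ of the semigroup translates into the module axiom with the correct placement of $\prescript{a}{}{b}$ and $a^b$, and not into its transpose. One must be careful that a right action corresponds to an \emph{anti}-homomorphism into $\End(M)$ unless the semigroup multiplication is read in the matching order, and this is exactly the kind of sign-of-convention issue where errors hide. Once the convention is pinned down—most cleanly by reading words left to right and letting them act on the right of $m$—the verification that the single relation is equivalent to the single module axiom is immediate, and the universal properties do all the remaining work.
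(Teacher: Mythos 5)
Your proof is correct: the paper gives no proof of this lemma at all (it is stated as an ``elementary property''), and your argument via the presentation of $SG_{(X,\sigma)}$ and the universal enveloping group is exactly the standard verification the authors leave implicit. In particular, you correctly handle the only two points requiring care: the anti-homomorphism convention for right actions (so that the relation $ab = \prescript{a}{}{b}\,a^b$ translates into $(m\cdot a)\cdot b = (m\cdot \prescript{a}{}{b})\cdot a^b$ rather than its transpose), and the fact that the group defined by the same presentation is the universal enveloping group of $SG_{(X,\sigma)}$, so that for solid modules the semigroup action by elements of $\Sym(M)$ factors uniquely through $G_{(X,\sigma)}$.
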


\begin{example}
In rack theory, the \emph{associated group} of a rack is a widely used notion. In our language, it is the group $G_{(X, \sigma_{\op})}$, isomorphic to $G_{(X, \op)}$ via the order-reversion map $(x_1,x_2,\ldots,x_n) \mapsto (x_n,\ldots,x_2,x_1)$. 
\end{example} 
  
\begin{example}
For the braided set associated to a monoid~$X$ (Example~\ref{EX:monoid}), the structure semigroup is isomorphic to the monoid itself, via the map sending $x_1\cdots x_n \in SG_X$ to the product $x_1 \star \cdots \star x_n \in X$.
\end{example}

\section{Guitar map}\label{S:Guitar}

This section is devoted to the remarkable guitar map and its applications to 
the study of braided sets and their structure (semi)groups.
Applications to homology will be treated in the next section.

\begin{defn}
The \emph{$n$-guitar map} for a braided set $(X,\, \sigma)$ is the map $J \colon X^{\times n} \to X^{\times n}$, defined by
\begin{align*}
J(\overline{x}) &= (J_1(\overline{x}),\ldots,J_n(\overline{x})), \qquad \overline{x} = (x_1, \ldots, x_n),\\
J_i(\overline{x}) &= x_i^{x_{i+1} \cdots x_n},
\end{align*}
with the notation $x_i^{x_{i+1} \cdots x_n} = (\ldots (x_i^{x_{i+1}})\ldots)^{x_{n}}$. We will often abusively talk about the \emph{guitar map}~$J$ meaning the family of $n$-guitar maps for all $n \geqslant 1$.
\end{defn}

The name comes from the resemblance of a diagrammatic version of the map~$J$ (Figure~\ref{P:Guitar}) with the position of guitar strings when a chord is played.

\begin{figure}[h]
\centering
\begin{tikzpicture}[xscale=0.9,>=latex]
 \draw [->, rounded corners=10] (0,0) -- (5,2.5) -- (4,3);
 \draw [line width=4pt,white, rounded corners=10] (1,0) -- (5,2) -- (3,3); 
 \draw [->, rounded corners=10] (1,0) -- (5,2) -- (3,3); 
 \draw [line width=4pt,white, rounded corners=10] (2,0) -- (5,1.5) -- (2,3); 
 \draw [->, rounded corners=10] (2,0) -- (5,1.5) -- (2,3);
 \draw [line width=4pt,white, rounded corners=10] (3,0) -- (5,1) -- (1,3); 
 \draw [->, rounded corners=10] (3,0) -- (5,1) -- (1,3); 
 \draw [line width=4pt,white, rounded corners=10] (4,0) -- (5,0.5) -- (0,3);   
 \draw [->, rounded corners=10] (4,0) -- (5,0.5) -- (0,3);  
 \node at (0,-0.4) [above] {$\scriptstyle{x_1}$};  
 \node at (2,-0.4) [above] {$\scriptstyle{\cdots}$}; 
 \node at (4,-0.4) [above] {$\scriptstyle{x_n}$}; 
 \node at (4.7,2.5) [right] {$\scriptstyle{J_1(\overline{x})}$};   
 \node at (4.7,2) [right] {$\scriptstyle{J_2(\overline{x})}$}; 
 \node at (5,1.25) [right] {$\;\scriptstyle{\vdots}$};  
 \node at (4.7,0.5) [right] {$\scriptstyle{J_n(\overline{x})}$};  
\end{tikzpicture} 
\hspace*{15pt}
\begin{tikzpicture}[xscale=0.9,>=latex]
 \draw [->, rounded corners=10] (0,0) -- (5,2.5) -- (4,3);
 \draw [line width=4pt,white, rounded corners=10] (1,0) -- (5,0.25)  -- (5.5,1.1) -- (5,2) -- (3,3); 
 \draw [->, rounded corners=10,red] (1,0) -- (5,0.25)  -- (5.5,1.1) -- (5,2) -- (3,3); 
 \draw [line width=4pt,white, rounded corners=10] (2,0) -- (5,1.5) -- (2,3); 
 \draw [->, rounded corners=10] (2,0) -- (5,1.5) -- (2,3);
 \draw [line width=4pt,white, rounded corners=10] (3,0) -- (5,1) -- (1,3); 
 \draw [->, rounded corners=10] (3,0) -- (5,1) -- (1,3); 
 \draw [line width=4pt,white, rounded corners=10] (4,0) -- (5,0.5) -- (0,3);   
 \draw [->, rounded corners=10] (4,0) -- (5,0.5) -- (0,3);  
 \node at (1,-0.4) [above] {$\scriptstyle{x_2}$};
 \node at (2,-0.4) [above] {$\scriptstyle{x_3}$};  
 \node at (3,-0.4) [above] {$\scriptstyle{x_4}$}; 
 \node at (4,-0.4) [above] {$\scriptstyle{x_5}$}; 
 \node at (4.9,2) [right] {$\scriptstyle{J_2(\overline{x})=}$}; 
 \node at (5.3,1.6) [right] {$\scriptstyle{x_2^{x_3 x_4 x_5}}$};    
\end{tikzpicture} 
   \caption{The guitar map (left) and a formula for calculating its components (right); the two diagrams are related by a sequence of R$\mathrm{III}$ moves.}\label{P:Guitar}
\end{figure}
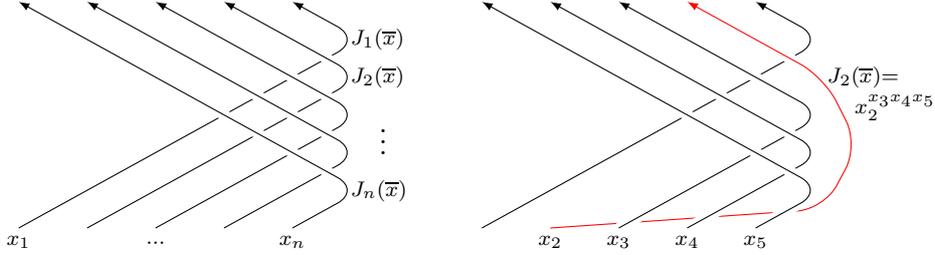

Recall the notation~$\sigma_i$ (Notation~\ref{N:sigma_i}), which we will use here with trivial coefficients. Also recall the extension of~$\sigma$ to $T(X)= \coprod_{i \geqslant 0} X^{\times i}$, and the right braided action $(\overline{a},\overline{b}) \mapsto \overline{a}^{\overline{b}}$ of $T(X)$ on itself (Example~\ref{EX:Adjoint}). One easily checks that both the extended braiding and its adjoint action descend from~$T(X)$ to the semigroup~$SG_{(X,\sigma)}$. For $\overline{a},\overline{b} \in T(X)$, their concatenation product will be denoted by~$\overline{a}\overline{b}$. 

\begin{pro}\label{PR:Guitar}
Let $(X,\,\sigma)$ be a left non-degenerate braided set, $\ops$ its associated shelf operation, and $\sigma'=\sigma'_{\ops} \colon (a,b) \mapsto (b \ops a,a)$ the braiding extracted from~$\ops$.
\begin{enumerate}
\item\label{I:Bij} The $n$-guitar map~$J$ for $(X,\,\sigma)$ is bijective for all $n \geqslant 1$.
\item\label{I:Cocycle} $J$ satisfies the cocycle property $J(\overline{a}\overline{b}) = (J(\overline{a})\action {\overline{b}}) J(\overline{b})$,  $\overline{a},\overline{b} \in T(X)$, where the action~$\action$ of $T(X)$ on itself is defined by
\begin{align}\label{E:FirstAction}
(a_1,\ldots,a_n) \action \overline{b} = (a_1^{\overline{b}},\ldots,a_n^{\overline{b}}).
\end{align} 
\item\label{I:AdjAction} $J$ entwines the adjoint action and the action~$\action$ above, in the sense of $J(\overline{a}^{\overline{b}}) = J(\overline{a})\action {\overline{b}}$.
\item\label{I:Entwine} $J$ entwines~$\sigma$ and~$\sigma'$, in the sense of $J \sigma_i = \sigma'_i J$.
\item\label{I:InducedAction} The operation~$\action$ induces an action of~$SG_{(X,\sigma)}$ on~$SG_{(X,\ops)}$.
\item\label{I:BijSG} $J$ induces a bijective cocycle $J^{SG} \colon SG_{(X,\sigma)} \overset{\sim}{\to} SG_{(X,\ops)}$, with the $SG_{(X,\sigma)}$-action on~$SG_{(X,\ops)}$ from the previous point. 
\end{enumerate}
\end{pro}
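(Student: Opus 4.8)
The six assertions are tightly interlocked, and I would prove them in essentially the order listed, since each point feeds the next. The conceptual heart of the whole proposition is the \emph{cocycle property}~\eqref{I:Cocycle}, so I would establish that first by a diagrammatic argument. The plan is to read $J(\overline{a}\overline{b})$ off the guitar diagram for the concatenated word $\overline{a}\overline{b}$: the strands originating in the $\overline{a}$-block must first cross the entire $\overline{b}$-block before reaching the right edge, and this initial passage is precisely the adjoint right action, sending $\overline{a}$ to $\overline{a}^{\overline{b}}$ while simultaneously acting on each individual component via the operation~$\action$ of~\eqref{E:FirstAction}. Once the $\overline{a}$-strands have cleared the $\overline{b}$-block, the remaining diagram splits as the guitar map applied to the (now transformed) $\overline{a}$-part stacked with the guitar map on the $\overline{b}$-part. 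Making this splitting precise via a sequence of R$\mathrm{III}$ moves yields $J(\overline{a}\overline{b}) = (J(\overline{a})\action\overline{b})\,J(\overline{b})$.

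With the cocycle identity in hand, assertion~\eqref{I:AdjAction} that $J$ entwines the adjoint action with~$\action$ is read off the same picture as the ``initial passage'' portion isolated above, and assertion~\eqref{I:Entwine} that $J\sigma_i = \sigma_i' J$ follows because $J$ converts the genuine braiding~$\sigma$ (which permutes and twists adjacent strands) into the shelf braiding~$\sigma'_{\ops}$ acting on the guitar-transformed coordinates---here I would invoke the definition $a\ops b = (b\cdot a)^b$ from Proposition~\ref{PR:AssShelf} to match the bookkeeping, checking it on two adjacent strands and extending by the pre-cubical/naturality structure. For the bijectivity~\eqref{I:Bij}, I would argue by induction on~$n$: the left non-degeneracy of~$\sigma$ makes each individual map $x_i \mapsto x_i^{x_{i+1}\cdots x_n}$ invertible \emph{once the later coordinates are known}, so one recovers $x_n$ from $J_n(\overline{x})=x_n$, then $x_{n-1}$, and so on down to $x_1$; this triangular/unipotent structure of~$J$ with respect to the ordering of strands gives an explicit inverse.

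Finally, for the descent statements~\eqref{I:InducedAction} and~\eqref{I:BijSG}, the strategy is to check that everything respects the defining relations $ab = \prescript{a}{}{b}\,a^b$ of the structure semigroups. The preamble already records that both the extended braiding and its adjoint action descend from $T(X)$ to $SG_{(X,\sigma)}$; I would verify analogously that the action~$\action$ is compatible with the relations of $SG_{(X,\ops)}$ on the target and with those of $SG_{(X,\sigma)}$ on the source, so that~\eqref{I:InducedAction} holds. For~\eqref{I:BijSG}, the cocycle property~\eqref{I:Cocycle} at the level of $T(X)$ shows $J$ sends the defining relations of $SG_{(X,\sigma)}$ to the defining relations of $SG_{(X,\ops)}$ (this is exactly where entwining~\eqref{I:Entwine} is used), so $J$ factors through a well-defined map $J^{SG}$ of semigroups; bijectivity is inherited from~\eqref{I:Bij} together with the symmetric construction of an inverse on the quotient.

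The main obstacle I anticipate is \textbf{not} any single algebraic identity but rather making the diagrammatic passage-through argument for the cocycle property~\eqref{I:Cocycle} rigorous---specifically, verifying that the R$\mathrm{III}$ moves used to split the concatenated guitar diagram genuinely produce the action~$\action$ applied \emph{componentwise} as in~\eqref{E:FirstAction}, rather than some entangled version of it. The subtlety is that $J_i(\overline{a}\overline{b})$ involves $a_i$ raised to the power of \emph{both} the remaining $\overline{a}$-coordinates and the entire~$\overline{b}$, and one must confirm that these two exponentiations factor cleanly as ``first act by~$\overline{b}$, then apply the $\overline{a}$-internal guitar map.'' I would pin this down either by the careful strand-tracking in Figure~\ref{P:Guitar} or, as a safeguard, by a direct inductive computation using the associativity of the iterated exponentiation notation $x_i^{x_{i+1}\cdots x_n}$.
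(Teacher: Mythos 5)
Your proposal is correct and follows essentially the same route as the paper: diagrammatic (R$\mathrm{III}$-based) arguments for the cocycle, adjoint-action, and entwining properties, the triangular/recursive inverse coming from left non-degeneracy for bijectivity, and descent through the defining semigroup relations for the last two points. The only place where the paper is sharper is point~(5): instead of verifying compatibility of~$\action$ with the target relations directly (which amounts to showing the adjoint action acts by shelf endomorphisms for~$\ops$), it uses the bijectivity of~$J$ to check the commutation $(\sigma'_i(\overline{c}))\action\overline{b} = \sigma'_i(\overline{c}\action\overline{b})$ only on tuples $\overline{c}=J(\overline{a})$, where it collapses to a one-line chain of equalities via points~(3) and~(4) --- a trick worth adopting in place of your ``verify analogously'' step.
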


In practice, the braiding~$\sigma'$ and the semigroup $SG_{(X,\ops)}$ turn out to be much simpler than~$\sigma$ and~$SG_{(X,\sigma)}$. For instance, for an involutive~$\sigma$, one obtains the \textit{flip} $\sigma' \colon (a,b) \mapsto (b,a)$ (cf.~Proposition~\ref{PR:AssShelf}), and $SG_{(X,\ops)}$ becomes the free abelian semigroup on~$X$. This reduction to shelves simplifies the study of certain aspects of braided sets. For example, the proposition implies that the action on~$X^{\times n}$ of the positive braid monoid~$B_n^+$ (or, for invertible~$\sigma$, of the whole braid group~$B_n$) induced by~$\sigma$ is conjugated to the action induced by~$\ops$. Thus, as far as $B_n^{(+)}$-actions are concerned, left non-degenerate braided sets yield nothing new compared to shelves.

Historically, the map~$J$ seems to be first considered by Etingof, Schedler,
and Soloviev~\cite{MR1722951} for involutive braidings (and thus with $\sigma'$
being the flip). In their setting, they showed Point~\ref{I:Entwine} of the
proposition, without assuming the left non-degeneracy of~$\sigma$. 
Their construction was extended to non-involutive braidings by
Soloviev~\cite{MR1809284} and by Lu, Yan, and Zhu~\cite{MR1769723}; the latter
used a mirror version of the guitar map and denoted it by~$T_n$.
For cycle sets, Dehornoy~\cite{MR3374524} developed a right-cyclic calculus, used to
obtain short proofs for the existence of the Garside structure, of the
$I$-structure, and of Coxeter-like groups for structure groups $G_{(X,
\cdot)}$. Certain maps~$\Omega_i$, crucial in his calculus, can in fact be
expressed as $\Omega(x_1, \ldots,x_n) = J^{-1}(x_n, \ldots, x_1)$, and their
properties follow from those of the guitar map. In yet another particular
case, that of braidings associated to shelves, the guitar map was reintroduced
by Przytycki~\cite{Prz1} under the name ``the remarkable map $f$'', and used in
a study of multi-term distributive homology. 

\begin{proof}[Proof of Proposition~\ref{PR:Guitar}]
In Point~\ref{I:Bij}, the bijectivity of~$J$ is equivalent to the left non-degeneracy of~$\sigma$.  
Points~\ref{I:Cocycle} and~\ref{I:AdjAction} can be checked by playing with the diagrammatic definition of~$J$. 
Point~\ref{I:Entwine} is proved in Figure~\ref{P:GuitarBr}. 
In~\ref{I:InducedAction}, the only non-trivial property to check is $(\sigma'_i (\overline{c}))\action {\overline{b}} = \sigma'_i (\overline{c}\action {\overline{b}})$. Since~$J$ is bijective, we will verify it for the tuples~$\overline{c}$ of the form $J(\overline{a})$ only: 
\begin{align*}
(\sigma'_i J(\overline{a}))\action {\overline{b}} &= (J\sigma_i(\overline{a}))\action {\overline{b}} = J((\sigma_i(\overline{a}))^{\overline{b}}) = J\sigma_i(\overline{a}^{\overline{b}}) = \sigma'_i J(\overline{a}^{\overline{b}}) \\
&= \sigma'_i (J(\overline{a})\action {\overline{b}}).
\end{align*} 
The last point is a consequence of the preceding ones.
\end{proof}

\begin{figure}[h]
\centering
\begin{tikzpicture}[xscale=0.7,yscale=0.9,>=latex]
 \draw [->, rounded corners=10] (0,-0.5) -- (0,0) -- (5,2.5) -- (4,3);
 \draw [line width=4pt,white, rounded corners=10] (1,0) -- (5,2) -- (3,3); 
 \draw [->, rounded corners=10] (2,0.5) -- (5,2) -- (3,3);
 \draw [line width=4pt,white, rounded corners=10] (2,0) -- (5,1.5) -- (2,3); 
 \draw [->, rounded corners=10] (1,-0.5) -- (2,0) -- (5,1.5) -- (2,3); 
 \draw [line width=4pt,white, rounded corners=10] (4,0) -- (5,0.5) -- (0,3);   
 \draw [->, rounded corners=10] (4,-0.5) -- (4,0) -- (5,0.5) -- (0,3);  
 \draw [line width=4pt,white, rounded corners=10] (2,-0.5) -- (1,0) -- (2,0.5); 
 \draw [rounded corners=10] (2,-0.5) -- (1,0) -- (2,0.5); 
 \draw [myviolet, dashed] (-1,0)--(5,0); 
 \node at (-1,-0.5) [myviolet] {${\sigma_2}$};  
 \node at (-1,1.5) [myviolet]  {${J}$}; 
 \node at (0,-0.9) [above] {$\scriptstyle{x_1}$};  
 \node at (1,-0.9) [above] {$\scriptstyle{x_2}$}; 
 \node at (2,-0.9) [above] {$\scriptstyle{x_3}$}; 
 \node at (4,-0.9) [above] {$\scriptstyle{x_4}$};
 \node at (4.7,2.5) [right] {$\scriptstyle{\color{myblue}J_1(\sigma_2(\overline{x}))}$};   
 \node at (4.7,2) [right] {$\scriptstyle{\color{myblue}J_2(\sigma_2(\overline{x}))}$};   
 \node at (4.7,1.5) [right] {$\scriptstyle{\color{myblue}J_3(\sigma_2(\overline{x}))}$};  
 \node at (4.7,0.5) [right] {$\scriptstyle{\color{myblue}J_4(\sigma_2(\overline{x}))}$};  
\end{tikzpicture} 
\hspace*{20pt}
\begin{tikzpicture}[xscale=0.7,yscale=0.9,>=latex]
 \draw [->, rounded corners=10] (0,-0.5) -- (0,0) -- (5,2.5) -- (4,3);
 \draw [line width=4pt,white, rounded corners=10] (2,-0.5) -- (5,1) -- (2,2.5) -- (3,3);
 \draw [->, rounded corners=10] (2,-0.5) -- (5,1) -- (2,2.5) -- (3,3);
 \draw [line width=4pt,white, rounded corners=10] (2,0) -- (5,1.5) -- (2,3); 
 \draw [->, rounded corners=10] (1,-0.5) -- (2,0) -- (5,1.5) -- (2,3); 
 \draw [line width=4pt,white, rounded corners=10] (4,0) -- (5,0.5) -- (0,3);   
 \draw [->, rounded corners=10] (4,-0.5) -- (4,0) -- (5,0.5) -- (0,3);  
 \draw [line width=4pt,white, rounded corners=10] (4.75,1.125) -- (4.25,1.375);   
 \draw [rounded corners=10] (4.75,1.125) -- (4.25,1.375); 
 \draw [myviolet, dashed] (-0.5,2.5)--(5.5,2.5); 
 \node at (-0.5,0.5) [myviolet]  {${J}$}; 
 \node at (0,-0.9) [above] {$\scriptstyle{x_1}$};  
 \node at (1,-0.9) [above] {$\scriptstyle{x_2}$}; 
 \node at (2,-0.9) [above] {$\scriptstyle{x_3}$}; 
 \node at (4,-0.9) [above] {$\scriptstyle{x_4}$};
 \node at (4.7,2.25) [right] {$\scriptstyle{\color{red} J_1(\overline{x})}$};   
 \node at (4.7,1.5) [right] {$\scriptstyle{\color{red} J_2(\overline{x})}$};  
 \node at (4.7,1) [right] {$\scriptstyle{\color{red} J_3(\overline{x})}$};   
 \node at (4.7,0.5) [right] {$\scriptstyle{\color{red} J_4(\overline{x})}$};   
\end{tikzpicture}
\begin{tikzpicture}[xscale=0.7,yscale=0.9,>=latex]
 \draw [latex->, rounded corners=10,mygreen] (-2,3.5) -- (-1.5,3)node[below left]
{R$\mathrm{III}$} -- (-0.5,2.5);
 \draw [latex->, rounded corners=10,mygreen] (11.5,3.5) -- (11,3)node[below right]
{R$\mathrm{III}$} -- (10,2.5);
 \draw [->, rounded corners=10] (0,-0.5) -- (0,0) -- (5,2.5) -- (4,3);
 \draw [line width=4pt,white, rounded corners=10] (1,0) -- (5,2) -- (3,3); 
 \draw [->, rounded corners=10] (2,-0.5) -- (5,1) -- (4,1.5) -- (5,2) -- (3,3);
 \draw [line width=4pt,white, rounded corners=10] (2,0) -- (5,1.5) -- (2,3); 
 \draw [->, rounded corners=10] (1,-0.5) -- (2,0) -- (5,1.5) -- (2,3); 
 \draw [line width=4pt,white, rounded corners=10] (4,0) -- (5,0.5) -- (0,3);   
 \draw [->, rounded corners=10] (4,-0.5) -- (4,0) -- (5,0.5) -- (0,3);  
 \draw [line width=4pt,white, rounded corners=10] (4.75,1.125) -- (4.25,1.375);   
 \draw [rounded corners=10] (4.75,1.125) -- (4.25,1.375); 
 \node at (0,-0.9) [above] {$\scriptstyle{x_1}$};  
 \node at (1,-0.9) [above] {$\scriptstyle{x_2}$}; 
 \node at (2,-0.9) [above] {$\scriptstyle{x_3}$}; 
 \node at (4,-0.9) [above] {$\scriptstyle{x_4}$};
 \node at (4.7,2.5) [right] {$\scriptstyle{{\color{myblue}J_1(\sigma_2(\overline{x}))} = \color{red} J_1(\overline{x})}$};   
 \node at (4.7,2) [right] {$\scriptstyle{{\color{myblue}J_2(\sigma_2(\overline{x}))} = \color{red} J_3(\overline{x}) \ops  J_2(\overline{x})}$};   
 \node at (4.7,1.5) [right] {$\scriptstyle{{\color{myblue}J_3(\sigma_2(\overline{x}))} = \color{red} J_2(\overline{x})}$};  
 \node at (4.7,1) [right] {$\scriptstyle{\color{red} J_3(\overline{x})}$};   
 \node at (4.7,0.5) [right] {$\scriptstyle{{\color{myblue}J_4(\sigma_2(\overline{x}))} = \color{red} J_4(\overline{x})}$};   
\end{tikzpicture}
   \caption{The entwining relation $J \sigma_i = \sigma'_i J$ (here $n=4$, $i=2$) is established by comparing the colors of the rightmost arcs in the bottom diagram as calculated from the upper left (blue labels) and the upper right diagrams (red labels).}\label{P:GuitarBr}
\end{figure}
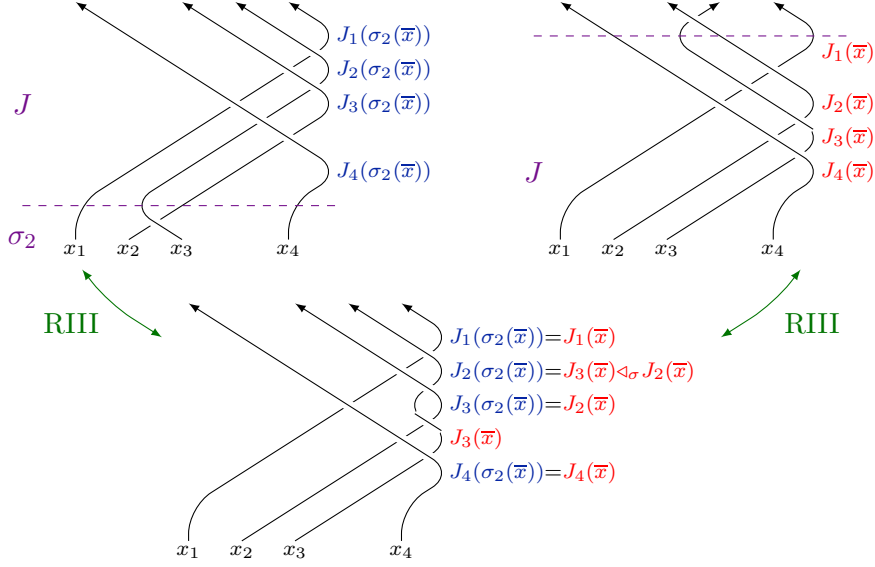

Our next aim is to upgrade the guitar map so that it induces a bijective cocycle $G_{(X,\sigma)} \overset{\sim}{\to} G_{(X,\ops)}$. This will be done for the case of a non-degenerate, invertible, and R$\mathrm{I}$-compatible braided set $(X,\,\sigma)$. In particular, one is now allowed to use all Reidemeister moves with all possible orientations.

Glue together two copies of~$X$ into $\oX = \{\,a^{+1},a^{-1} \,|\, a \in X\,\}$, called the \textit{double} of~$X$. The braiding~$\sigma$ extends to~$\oX$ via
\begin{itemize}
\item $\osigma(a^{+1},b^{+1}) = (c^{+1},d^{+1})$, where $\sigma(a,b)=(c,d)$;
\item $\osigma(a^{-1},b^{-1}) = (c^{-1},d^{-1})$, where $\sigma(d,c)=(b,a)$;
\item $\osigma(a^{+1},b^{-1}) = (c^{-1},d^{+1})$, where $\sigma(d,b)=(c,a)$;
\item $\osigma(a^{-1},b^{+1}) = (c^{+1},d^{-1})$, where $\sigma(a,c)=(b,d)$.
\end{itemize}
 This definition is best seen graphically: the diagram for~$\osigma$ on $(a_1^{\varepsilon_1}, a_2^{\varepsilon_2})$ is the one for~$\sigma$, with the orientation of the $i$th  strand reversed whenever $\varepsilon_i = -1$. The new braiding~$\osigma$ inherits the non-degeneracy, invertibility, and R$\mathrm{I}$-compatibility properties of~$\sigma$ (with $t(a^{+1}) = t(a)^{+1}$, and $t(a^{-1}) = (t^{-1}(a))^{-1}$).

We also need the \emph{toss map} $K\colon \oX^{\times n} \to \oX^{\times n}$ defined by $K(a^{+1})=a^{+1}$, $K(a^{-1})=t(a)^{-1}$, and $K(x_1^{\varepsilon_1}, \ldots, x_n^{\varepsilon_n}) = (K(x_1^{\varepsilon_1}), \ldots, K(x_n^{\varepsilon_n}))$. The change from~$a$ to~$t(a)$ for ``negatively oriented'' elements can be regarded as the color change happening to a negatively oriented circle when it flips and gets positive orientation (Figure~\ref{P:Toss}), hence the name.

\begin{figure}[h]
\labellist
\small\hair 2pt

\pinlabel $a$ at 122 113
\pinlabel $a$ at 240 113
\pinlabel ${\color{red}t(a)}$ at 555 113
\pinlabel ${\color{red}t(a)}$ at 788 113

\pinlabel ${\color{mygreen} \mapsto}$ at 183 65
\pinlabel ${\color{mygreen} \mapsto}$ at 596 65

\endlabellist
\centering 
\hspace{5pt}
\includegraphics[scale=0.3]{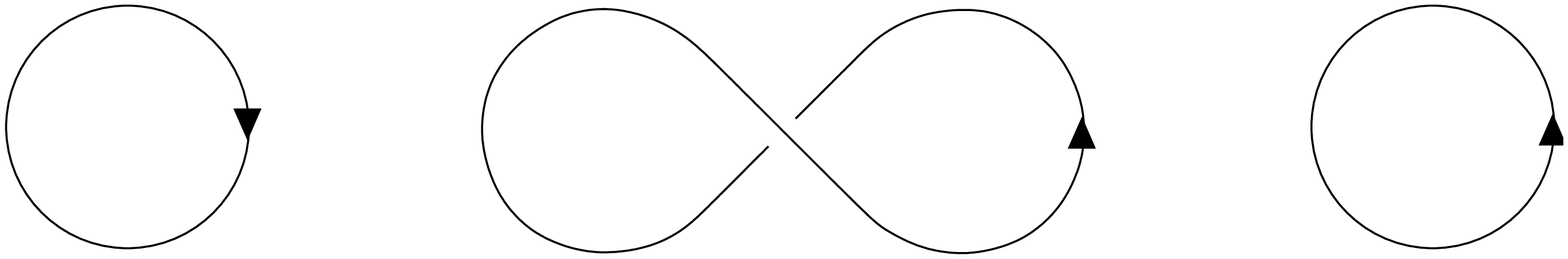}
   \caption{The toss map.}\label{P:Toss}
\end{figure}

\begin{thm}\label{T:BijG}
Let $(X,\,\sigma)$ be a non-degenerate invertible R$\mathrm{I}$-compatible braided set, $\ops$ its associated shelf operation, $(\oX,\,\osigma)$ its double, $J$ the guitar map of $(\oX,\,\osigma)$, and~$K$ the toss map. The map $\oJ = K J \colon \oX^{\times n} \to \oX^{\times n}$ induces a bijective cocycle $G_{(X,\sigma)} \overset{\sim}{\to} G_{(X,\ops)}$, where the group $G_{(X,\sigma)}$ act on $G_{(X,\ops)}$ via 
\begin{align}\label{E:SecondAction}
(a_1^{\varepsilon_1} \cdots a_n^{\varepsilon_n}) \oaction \overline{b} = (a_1^{\overline{b}})^{\varepsilon_1} \cdots (a_n^{\overline{b}})^{\varepsilon_n},
\end{align} 
for $a_i \in X$, $\varepsilon_i \in \{1,-1\}$, $\overline{b} \in G_{(X,\sigma)}$.
\end{thm}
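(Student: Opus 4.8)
The plan is to deduce the group-level statement from the semigroup-level Proposition~\ref{PR:Guitar}\ref{I:BijSG}, applied to the double $(\oX,\osigma)$, by quotienting both sides by the cancellation relations that the now-available oriented Reidemeister~$\mathrm{II}$ moves (Figure~\ref{P:BrInv}) impose. Because $\osigma$ is non-degenerate, invertible and R$\mathrm{I}$-compatible, a positively and a negatively oriented strand of the same color may be created and annihilated in pairs; accordingly I would first identify $G_{(X,\sigma)}$ with the quotient of the structure semigroup $SG_{(\oX,\osigma)}$ by the cancellation relations $a^{+1}a^{-1}=a^{-1}a^{+1}=\varnothing$, $a\in X$. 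The assignment $a^{\pm1}\mapsto a^{\pm1}$ sends the defining relations of $SG_{(\oX,\osigma)}$ to relations valid in $G_{(X,\sigma)}$ and realizes exactly its presentation. The associated shelf of $(\oX,\osigma)$ extends $\ops$ (indeed $a^{+1}\ops b^{+1}=(a\ops b)^{+1}$ by Proposition~\ref{PR:AssShelf}), and the shelf braiding of the double coincides with the double of the shelf braiding $\sigma'$; the analogous argument then identifies $G_{(X,\ops)}$ with $SG_{(\oX,\ops)}$ modulo its own cancellation relations.

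Applying Proposition~\ref{PR:Guitar}\ref{I:BijSG} to the double already yields a bijective cocycle $SG_{(\oX,\osigma)}\overset{\sim}{\to}SG_{(\oX,\ops)}$. The heart of the proof is that the toss-corrected map $\oJ=KJ$ carries cancellation relations to cancellation relations, hence descends to the quotients; this is precisely where the toss map is indispensable. I would verify it by the direct computation
\begin{align*}
\oJ(a^{+1},a^{-1})&=K\bigl((a^{+1})^{a^{-1}},a^{-1}\bigr)=K\bigl(t(a)^{+1},a^{-1}\bigr)=\bigl(t(a)^{+1},t(a)^{-1}\bigr),\\
\oJ(a^{-1},a^{+1})&=K\bigl((a^{-1})^{a^{+1}},a^{+1}\bigr)=K\bigl((t^{-1}(a))^{-1},a^{+1}\bigr)=\bigl(a^{-1},a^{+1}\bigr),
\end{align*}
where both evaluations of the doubled adjoint action rest on the R$\mathrm{I}$-compatibility relation $\osigma(t(x),x)=(t(x),x)$ together with the characterization of $t(x)$ as the unique element with $t(x)^{x}=x$ (Example~\ref{EX:birack}). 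Both outputs are again cancelling pairs; the single color change $a\mapsto t(a)$ encoded by $K$ is exactly the kink color change of Figures~\ref{P:RI} and~\ref{P:Toss} incurred when the negatively oriented strand is flipped, and it compensates precisely the discrepancy left by $J$ alone.

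It then remains to promote this to a bijective cocycle. First I would extend Proposition~\ref{PR:Guitar}\ref{I:InducedAction} to show that the action~$\oaction$ of~\eqref{E:SecondAction} factors through $G_{(X,\sigma)}$, so that group-trivial elements act trivially. Next I would check that $\oJ$ is itself a cocycle, $\oJ(\overline{a}\,\overline{b})=(\oJ(\overline{a})\oaction\overline{b})\oJ(\overline{b})$; this follows from the cocycle property of $J$ (Proposition~\ref{PR:Guitar}\ref{I:Cocycle}) once one knows that $K$ intertwines the doubled adjoint action~$\action$ with~$\oaction$, i.e.\ $K(\overline{x}\action\overline{b})=K(\overline{x})\oaction\overline{b}$, which on negatively oriented letters amounts to the equivariance $t(x^{c'})=t(x)^{c}$ (with $\prescript{x}{}{c'}=c$) of the R$\mathrm{I}$-map under the braiding action, a short diagrammatic verification. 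Combining the cocycle identity with the cancellation-compatibility above, a word containing a cancelling pair is sent to a word containing a cancelling pair while the surrounding factors are left unaffected modulo the target cancellations; hence $\oJ$ descends to a well-defined $\oJ^{G}\colon G_{(X,\sigma)}\to G_{(X,\ops)}$. Bijectivity is inherited from that of $J$ (Proposition~\ref{PR:Guitar}\ref{I:Bij}) and of $K$ (immediate from the bijectivity of $t$), the descent being compatible in both directions.

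The step I expect to be the main obstacle is this cancellation-compatibility of $\oJ$, together with the intertwining property of $K$: one must track orientations through the guitar braiding and confirm that the \emph{single} correction $a\mapsto t(a)$ built into the toss map accounts for \emph{all} the induced color changes, for both products $a^{+1}a^{-1}$ and $a^{-1}a^{+1}$ and in the presence of neighbouring letters. The circle calculus of Figures~\ref{P:Birack_di}--\ref{P:AssShelfCircles} makes this bookkeeping tractable, but the delicate point is to ensure compatibility with the full braiding relations of $SG_{(\oX,\ops)}$, and not merely with the bare cancellations.
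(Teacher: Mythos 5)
Your core computations are the right ones, and they coincide with the paper's: the evaluation of $\oJ$ on the pairs $(a^{+1},a^{-1})$ and $(a^{-1},a^{+1})$, and the intertwining $K(\overline{x}\action\overline{b})=K(\overline{x})\oaction\overline{b}$, whose negative-letter case is exactly the paper's kink-trick identity $t(a)^{\overline{b}}=t(c)$ with $c^{-1}=(a^{-1})^{\overline{b}}$. The gap lies in the architecture you chose. By routing through Proposition~\ref{PR:Guitar}\ref{I:BijSG} applied to $(\oX,\osigma)$, you present $G_{(X,\sigma)}$ as $SG_{(\oX,\osigma)}$ modulo cancellations, and you must then check that $\oJ=KJ$ (not merely $J$) is compatible with \emph{all} defining relations of $SG_{(\oX,\osigma)}$, including the $\osigma$-relations in which one or both letters are negative. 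The map $J$ respects those by the entwining property, but $K$ changes every negative letter ($a^{-1}\mapsto t(a)^{-1}$), so compatibility on mixed pairs requires an extra identity of the type $t(b\ops a)=t(b)\ops a$ (that $K$ is a morphism for the derived structure of the double); nothing in your write-up establishes this, and you yourself flag it as the expected main obstacle without resolving it. A second unproved ingredient sits upstream: to identify the target of Proposition~\ref{PR:Guitar}\ref{I:BijSG} with a presentation of $G_{(X,\ops)}$, you need the associated shelf operation of the double on mixed-sign pairs, e.g.\ that it sends $(b^{-1},a^{+1})$ to $(b\ops a)^{-1}$. This is true, but it is not a formal consequence of Proposition~\ref{PR:AssShelf} (which only gives the positive-pair case); proving it requires the Yang--Baxter equation together with non-degeneracy, and your appeal to "the shelf braiding of the double coincides with the double of the shelf braiding" is precisely the statement in need of proof.

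The paper's proof is organized so that neither obligation ever arises, and this is the missing idea. It never passes through $SG_{(\oX,\osigma)}$: instead $G_{(X,\sigma)}$ is realized as $T(\oX)$ modulo only the cancellation relations $\overline{a}b^{+1}b^{-1}\overline{d}=\overline{a}\overline{d}$, $\overline{a}b^{-1}b^{+1}\overline{d}=\overline{a}\overline{d}$ together with the braiding relations on \emph{positive} pairs; the mixed and negative $\osigma$-relations hold automatically in this group (they follow from the positive ones by taking inverses and rearranging), so they need not be checked at all. Since the adjoint action of the double preserves signs letterwise (hence so does $J$) and $K$ fixes all positive letters, the entwining property of Proposition~\ref{PR:Guitar}\ref{I:Entwine} disposes of the positive braiding relations in one line, and the whole difficulty is concentrated in the cancellation relations---which are handled exactly by the inverse-pair computations, the capping-trick identities $\overline{a}^{(b^{+1}b^{-1})}=\overline{a}=\overline{a}^{(b^{-1}b^{+1})}$, and the kink-trick identity that you do have. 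If you replace your presentation of $G_{(X,\sigma)}$ by this one, Proposition~\ref{PR:Guitar}\ref{I:BijSG} becomes unnecessary and your remaining steps essentially reproduce the paper's proof; as written, however, the descent of $\oJ$ through the non-positive braiding relations is a genuine missing step.
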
 

This result first appeared in~\cite{MR1722951} for involutive braidings, in~\cite{MR1769723} for braided groups, and in~\cite{MR1809284} in the general case. Its known proofs are rather technical and do not use the guitar map.

Note that the operation~$\oaction$ above is in general different from the
operation~$\action$ from~\eqref{E:FirstAction}, considered here for the braided
set $(\oX,\,\osigma)$.

\begin{proof}
Figure~\ref{P:JonInverse} shows that the map~$\oJ$ behaves well on the ``inverse pairs'', in the sense of
\begin{align*}
\oJ(a^{+1},a^{-1}) &= (t(a)^{+1},t(a)^{-1}), & \oJ(a^{-1},a^{+1}) &= (a^{-1},a^{+1}).
\end{align*}

\begin{figure}[h]
\labellist
\small\hair 2pt

\pinlabel ${\color{myviolet} J}$ at -20 44
\pinlabel $\scriptstyle{\color{myblue}a^{+1}}$ at 33 20
\pinlabel $\scriptstyle{\color{myblue}a^{-1}}$ at 130 20

\pinlabel ${\color{myviolet} K}$ at 297 26
\pinlabel $\scriptstyle{\color{myblue}a^{-1}}$ at 188 87
\pinlabel $\scriptstyle{\color{myblue}t(a)^{+1}}$ at 202 262
\pinlabel $\scriptstyle{\color{red}t(a)^{-1}}$ at 360 83
\pinlabel $\scriptstyle{\color{red}t(a)^{+1}}$ at 360 262

\pinlabel ${\color{myviolet} J}$ at 521 44
\pinlabel $\scriptstyle{\color{myblue}a^{-1}}$ at 571 20
\pinlabel $\scriptstyle{\color{myblue}a^{+1}}$ at 668 20

\pinlabel ${\color{myviolet} K}$ at 834 26
\pinlabel $\scriptstyle{\color{myblue}a^{+1}}$ at 726 87
\pinlabel $\scriptstyle{\color{myblue}\left( t^{-1}(a)\right) ^{-1}}$ at 766 262
\pinlabel $\scriptstyle{\color{red}a^{+1}}$ at 865 83
\pinlabel $\scriptstyle{\color{red}a^{-1}}$ at 870 262

\endlabellist
\centering 
\hspace{5pt}
\includegraphics[width=0.9\linewidth, height=0.25\linewidth]{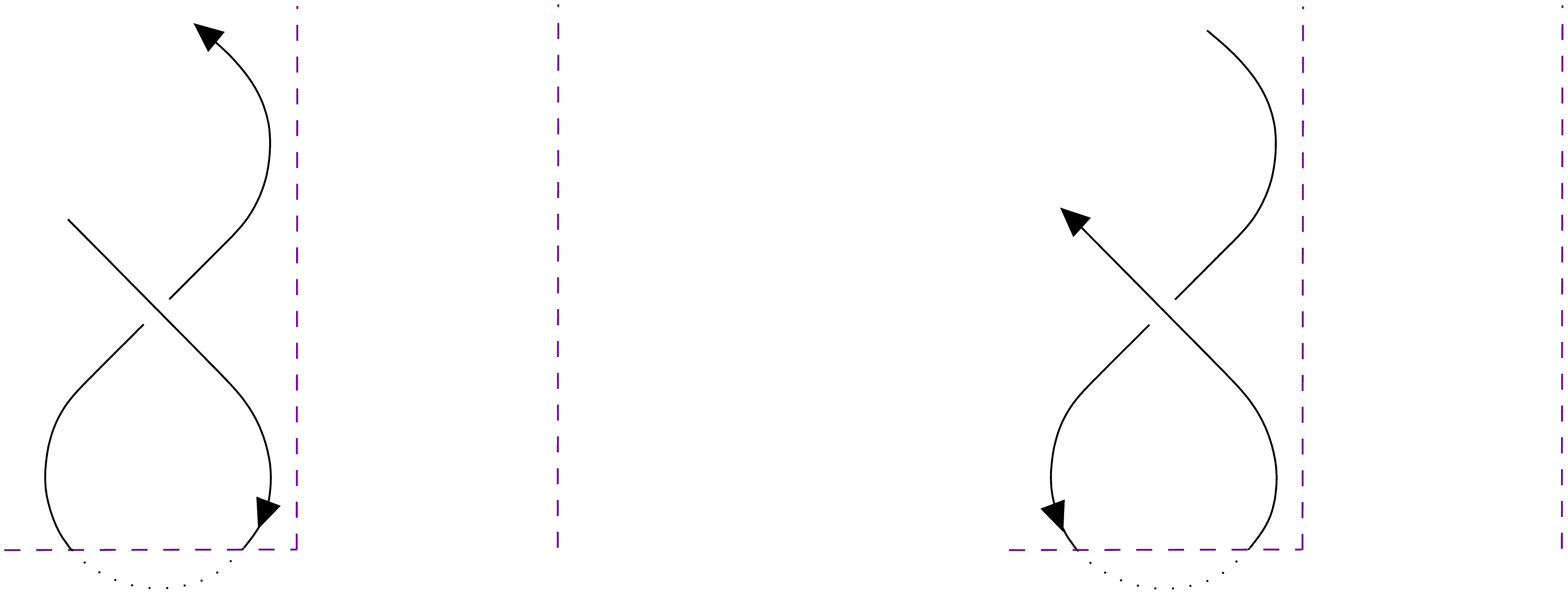}
   \caption{Computation of $\oJ(a^{+1},a^{-1})$ and $\oJ(a^{-1},a^{+1})$ in two steps: first applying the graphical guitar map~$J$, next modifying the result by the toss map~$K$.}\label{P:JonInverse}
\end{figure}

The adjoint right action also preserves the ``inverse pairs'': one has
\begin{align*}
(a^{+1},a^{-1})^{\overline{b}} &= (c^{+1},c^{-1}), & (a^{-1},a^{+1})^{\overline{b}} &= ((a^{\overline{b}})^{-1},(a^{\overline{b}})^{+1}),
\end{align*}
where $\overline{b}$ lives in~$\oX^{\times n}$, and $c \in X$ is defined via $c^{-1} = (a^{-1})^{\overline{b}}$ in~$\oX$. 
The second identity is proved in Figure~\ref{P:ActOnInverse}; the first one is similar.

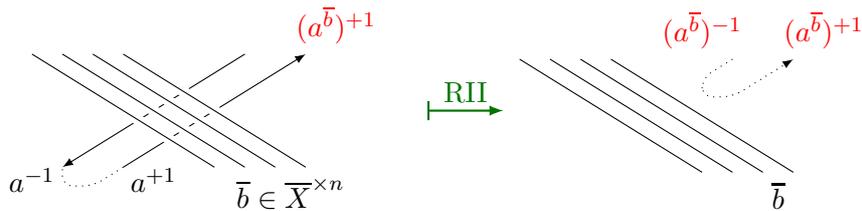
\begin{figure}[h]
\centering
\begin{tikzpicture}[xscale=0.4,yscale=0.25,>=latex]
\draw [->] (7,6)--(1,0);
\draw [->] (3,0)--(9,6);
\draw [dotted, rounded corners] (3,0)--(2,-1)--(1,-1)--(1,0);
\draw [line width=4pt,white] (6,0)--(0,6);
\draw [line width=4pt,white] (7,0)--(1,6);
\draw [line width=4pt,white] (8,0)--(2,6);
\draw [line width=4pt,white] (9,0)--(3,6);
\draw (6,0)--(0,6);
\draw (7,0)--(1,6);
\draw (8,0)--(2,6);
\draw (9,0)--(3,6);
\node [below] at (0,0.5) {$a^{-1}$};
\node [below] at (4,0.5) {$a^{+1}$};
\node [below] at (8.5,0) {$\overline{b} \in \oX^{\times n}$};
\node [above] at (10,6) {$\color{red} (a^{\overline{b}})^{+1}$};
\node [mygreen] at (14.25,4) {$\mathrm{RII}$};
 \draw [|->, mygreen, thick] (13,3)--++(2.5,0);
\end{tikzpicture}
\begin{tikzpicture}[xscale=0.4,yscale=0.25,>=latex]
\draw [->,dotted, rounded corners] (7,6)--(6,5)--(6,4)--(7,4)--(9,6);
\draw [line width=4pt,white] (6,0)--(0,6);
\draw [line width=4pt,white] (7,0)--(1,6);
\draw [line width=4pt,white] (8,0)--(2,6);
\draw [line width=4pt,white] (9,0)--(3,6);
\draw (6,0)--(0,6);
\draw (7,0)--(1,6);
\draw (8,0)--(2,6);
\draw (9,0)--(3,6);
\node [below] at (8.5,0) {$\overline{b}$};
\node [above] at (6,6) {$\color{red} (a^{\overline{b}})^{-1}$};
\node [above] at (10,6) {$\color{red} (a^{\overline{b}})^{+1}$};
\end{tikzpicture}
   \caption{Computation of the adjoint action $(a^{-1},a^{+1})^{\overline{b}}$ via the capping trick.}\label{P:ActOnInverse}
\end{figure}

The same graphical capping trick yields the properties
\begin{align}\label{E:ActionOfInv}
\overline{a}^{(b^{+1},b^{-1})} &= \overline{a} =  \overline{a}^{(b^{-1},b^{+1})}.
\end{align}

Now, using Proposition~\ref{PR:Guitar}, one has
\begin{align*}
\oJ(\overline{a}\overline{b})&= KJ(\overline{a}\overline{b}) = K((J(\overline{a}) \action \overline{b})J(\overline{b}))= K(J(\overline{a}^{\overline{b}})J(\overline{b}))= KJ(\overline{a}^{\overline{b}}) KJ(\overline{b})\\
&= \oJ(\overline{a}^{\overline{b}}) \oJ(\overline{b}),
\end{align*}
and thus
\begin{align*}
\oJ(\overline{a}b^{+1}b^{-1}\overline{d})&= \oJ((\overline{a}b^{+1}b^{-1})^{\overline{d}}) \oJ(\overline{d}) = \oJ(\overline{a}^{b^{+1}b^{-1}\overline{d}}) \oJ((b^{+1}b^{-1})^{\overline{d}}) \oJ(\overline{d}) \\
&= \oJ(\overline{a}^{\overline{d}}) \oJ(c^{+1}c^{-1}) \oJ(\overline{d}) = \oJ(\overline{a}^{\overline{d}}) t(c)^{+1}t(c)^{-1} \oJ(\overline{d}),
\end{align*}
where $c \in X$ is defined via $c^{-1} = (b^{-1})^{\overline{d}}$. Together with the invertibility of the maps~$J$, $K$, and~$t$ and of the adjoint action on~$T(\oX)$, this implies that the map $\oJ \colon T(\oX) \to T(\oX)$ survives when on both sides one mods out the relations $\overline{a}b^{+1}b^{-1}\overline{d}=\overline{a}\overline{d}$, $b \in X$, $\overline{a},\overline{d} \in \oX$. Relations $\overline{a}b^{-1}b^{+1}\overline{d}=\overline{a}\overline{d}$ can be treated analogously.  Moreover, $J$ entwines~$\sigma$ and $\sigma'=\sigma'_{\ops}$ (Proposition~\ref{PR:Guitar}), and~$K$ does not alter the elements $a^{+1} \in \oX$, so $\oJ = K J$ still survives when one mods out the relations $\overline{a}b^{+1}c^{+1}\overline{d}=\overline{a}\sigma(b^{+1},c^{+1})\overline{d}$ on the left and
$\overline{a}b^{+1}c^{+1}\overline{d}=\overline{a}\sigma'(b^{+1},c^{+1})\overline{d}$ on the right. 
Hence~$\oJ$ induces a bijection $G_{(X,\sigma)} \overset{\sim}{\to} G_{(X,\ops)}$. 

Next, formula~\eqref{E:SecondAction} defines an action of the semigroup $T(\oX)$ on itself. It behaves well with respect to the inverse pairs: the property 
\begin{align*}
(a_1^{\varepsilon_1},\ldots,a_i^{\varepsilon_i}, a_i^{-\varepsilon_i},\ldots, a_n^{\varepsilon_n}) \oaction \overline{b} = ((a_1^{\overline{b}})^{\varepsilon_1},\ldots, (a_i^{\overline{b}})^{\varepsilon_i}, (a_i^{\overline{b}})^{-\varepsilon_i},\ldots  (a_n^{\overline{b}})^{\varepsilon_n})
\end{align*} 
is clear from the definition, and the property 
\begin{align*}
\overline{a} \oaction (\overline{b}c^{\varepsilon}c^{-\varepsilon}\overline{d}) = \overline{a} \oaction (\overline{b}\overline{d})
\end{align*}
follows from~\eqref{E:ActionOfInv}. Nice behavior with respect to the braidings~$\sigma$ and~$\sigma'$ is proved in the same way as for the action~$\action$ in Proposition~\ref{PR:Guitar}. Altogether, this shows that~$\oaction$ induces an action of $G_{(X,\sigma)}$ on $G_{(X,\ops)}$. It remains to check for~$\oJ$ the cocycle property with respect to this action. 
It will follow from the relation $\oJ(\overline{a}\overline{b})= \oJ(\overline{a}^{\overline{b}}) \oJ(\overline{b})$ from the previous paragraph if we manage to prove the identity 
\begin{align}\label{E:ActionEntw}
\oJ(\overline{a}^{\overline{b}}) &= \oJ(\overline{a}) \oaction \overline{b}.
\end{align} 
The relation $J(\overline{a}^{\overline{b}}) = J(\overline{a}) \action \overline{b}$ from Proposition~\ref{PR:Guitar} reduces~\eqref{E:ActionEntw} to 
\begin{align*}
K(J(\overline{a}) \action \overline{b}) &= (KJ(\overline{a})) \oaction \overline{b}.
\end{align*}
 The toss map~$K$ and the operations~$\action$ and~$\oaction$ acting component-wise, it suffices to consider the relation $K(a^{+1} \action \overline{b}) = K(a^{+1}) \oaction \overline{b}$, $a \in X$, in which case both sides equal $(a^{\overline{b}})^{+1}$; and $K(a^{-1} \action \overline{b}) = K(a^{-1}) \oaction \overline{b}$, which translates as $t(a)^{\overline{b}} = t(c)$, where $c \in X$ is defined via $c^{-1} = (a^{-1})^{\overline{b}}$. This latter property is verified graphically in Figure~\ref{P:KinkTrick}. \qedhere

\begin{figure}[h]
\begin{tikzpicture}[xscale=0.8,>=latex]
 \draw [rounded corners=10,>-] (-1,0.6) -- (-1,0.9) -- (-0.3,1.5) -- (0.5,1.7) -- (0.5,1.25);
 \draw [line width=5pt,white, rounded corners=10] (0.5,1.25) -- (0.5,0.8) -- (-0.3,1) -- (-1,1.9) -- (-1,2.5);
 \draw [rounded corners=10,<-] (0.5,1.25) -- (0.5,0.8) -- (-0.3,1) -- (-1,1.9) -- (-1,2.5);
 \draw [line width=5pt,white, rounded corners=10]  (1,0.6) -- (1,1.7) -- (-1.5,2.2) -- (-1.5,2.5);
 \draw [ultra thick, rounded corners=10,-<] (1,0.6) -- (1,1.7) -- (-1.5,2.2) -- (-1.5,2.5);
 \node  at (2.5,1.5){\Large \color{mygreen} $\overset{\mathrm{RII}}{\longleftrightarrow}$};
 \node at (-1,0.4) {$\scriptstyle{a^{-1}}$}; 
 \node at (1.1,0.4) {$\scriptstyle{\overline{b}}$}; 
 \node at (-0.9,2.7) {$\scriptstyle{c^{-1}}$}; 
 \node at (-1.2,1.6) {$\scriptstyle{a^{-1}}$}; 
 \node at (0.4,0.7) {$\scriptstyle{t(a)}$}; 
\end{tikzpicture}
\begin{tikzpicture}[xscale=0.8,>=latex]
 \draw [rounded corners=10,>-] (-1,0.3) -- (-1,0.6) -- (-0.3,1.5) -- (0.5,1.7) -- (0.5,1.25);
 \draw [line width=5pt,white, rounded corners=10] (0.5,1.25) -- (0.5,0.8) -- (-0.3,1) -- (-1,1.9) -- (-1,2.2);
 \draw [rounded corners=10,<-] (0.5,1.25) -- (0.5,0.8) -- (-0.3,1) -- (-1,1.9) -- (-1,2.2);
 \draw [line width=5pt,white, rounded corners=10] (1,0.3) -- (1,1.2) -- (-1.5,1.8) -- (-1.5,2.5);
 \draw [ultra thick, rounded corners=10,->] (1,0.3) -- (1,1.2) -- (-1.5,1.8) -- (-1.5,2.5);
 \node  at (2.5,1.2){\Large \color{mygreen} $\overset{\mathrm{RII}}{\longleftrightarrow}$};
 \node at (-1,0.1) {$\scriptstyle{a^{-1}}$}; 
 \node at (1.1,0.1) {$\scriptstyle{\overline{b}}$}; 
 \node at (-0.9,2.4) {$\scriptstyle{c^{-1}}$}; 
 \node at (0.4,0.7) {$\scriptstyle{t(a)}$}; 
 \node at (1,1.7) [myblue] {$\scriptstyle{t(a)^{\overline{b}}}$}; 
\end{tikzpicture}
\begin{tikzpicture}[xscale=0.8,>=latex]
 \node  at (-0.5,1.2){};
 \draw [rounded corners=10,<-] (-1,0) -- (-1,0.6) -- (-0.3,1.5) -- (0.5,1.7) -- (0.5,1.25);
 \draw [line width=5pt,white, rounded corners=10] (0.5,1.25) -- (0.5,0.8) -- (-0.3,1) -- (-1,1.6) -- (-1,1.9);
 \draw [rounded corners=10] (0.5,1.25) -- (0.5,0.8) -- (-0.3,1) -- (-1,1.6) -- (-1,1.9);
 \draw [line width=5pt,white, rounded corners=10] (1,0) -- (1,0.3) -- (-1.5,0.8) -- (-1.5,1.9);
 \draw [ultra thick, rounded corners=10,->] (1,0) -- (1,0.3) -- (-1.5,0.8) -- (-1.5,1.9);
 \node at (-1,-0.2) {$\scriptstyle{a^{-1}}$}; 
 \node at (1.1,-0.2) {$\scriptstyle{\overline{b}}$}; 
 \node at (-0.9,2.1) {$\scriptstyle{c^{-1}}$}; 
 \node at (1,1.7) [myblue] {$\scriptstyle{t(a)^{\overline{b}}}$}; 
 \node at (1.1,1.2) [red] {$\scriptstyle{=t(c)}$}; 
\end{tikzpicture}
   \caption{Comparing $t(a)^{\overline{b}}$ and $t(c)$ under the assumption $c^{-1} = (a^{-1})^{\overline{b}}$. Thick strands stand here for bundles of parallel strands.}\label{P:KinkTrick}
\end{figure}
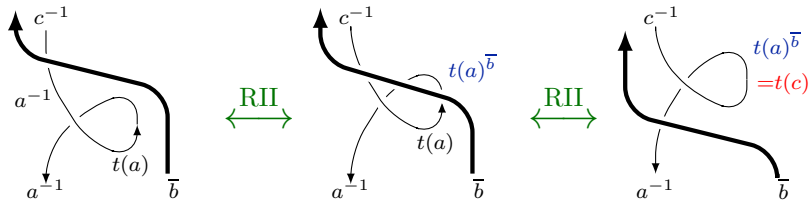	
\end{proof}

\section{The two homology theories coincide}\label{S:HomCoincide} 

Recall that for biracks we have seen two homology constructions: the general braided homology (Section~\ref{S:BrHom}), and a specific theory (Section~\ref{S:BirackHom}). We now establish the equivalence of these theories using the guitar map~$J$. Moreover, we extend the birack homology (and our equivalence of complexes) to the more general left non-degenerate (=LND) braided sets, and add coefficients to the complexes involved. 

As usual, for LND braidings we make use of the notations $b \wdot a$ and $a \cdot b$ (Notation~\ref{N:sideways}), and of the graphical calculus involving allowed R$\mathrm{II}$ moves (Definition~\ref{D:AllowedRII}) and nicely oriented R$\mathrm{III}$ moves. See Figure~\ref{P:Side} for the coloring rules expressed in terms of the operations~$\cdot$ and~$\wdot$. We will also need the maps
\begin{align}\label{E:chi}
\chi_i (y_1, \ldots, y_k) &= J^{-1}_1 ((y_i \ops y_{i-1}) \cdots \ops y_1, y_1, \ldots, y_{i-1}, y_{i+1}, \ldots, y_k)
\end{align}
from~$X^{\times k}$ to~$X$, where we write the inverse of the $k$-guitar map~$J$ as $J^{-1} = (J^{-1}_1, \ldots,J^{-1}_k)$.

\begin{thm}\label{T:HomEquiv}
Let $(X,\,\sigma)$ be a left non-degenerate braided set. Let $(M,\,\cdot)$ be a right module and $(N,\,\cdot)$ be a left module over $(X,\,\sigma)$.
    \begin{enumerate} 
        \item 
			A pre-cubical structure on $C_k=M \times X^{\times k} \times N$ can be given by the maps            
            \begin{align*}
                d_i &\colon (m, y_1, \ldots, y_k, n) \mapsto (m, y_i \wdot  y_1, \ldots, y_i \wdot  y_{i-1}, y_i \cdot y_{i+1}, \ldots, y_i \cdot  y_k, y_i \cdot n),\\
                d'_i & \colon (m, y_1, \ldots, y_k, n) \mapsto (m \cdot \chi_i (\overline{y}), y_1, \ldots, y_{i-1}, y_{i+1}, \ldots, y_k, n).
            \end{align*}
        \item 
            The extended guitar map $J := \Id_M \times J \times \Id_N$ yields an isomorphism between the pre-cubical structure $(d^{r,-}_i, d^{l,+}_i)$ from Theorem~\ref{T:BrHom} and the structure $(d_i, d'_i)$ above.
    \end{enumerate}
\end{thm}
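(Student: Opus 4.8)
My plan is to prove Part~(2) and then read off Part~(1) from it. The extended guitar map $J=\Id_M\times J\times\Id_N$ is a bijection of each $C_k$, since the $k$-guitar map is bijective by Proposition~\ref{PR:Guitar}\,(\ref{I:Bij}) and $\Id_M,\Id_N$ are. Consequently, as soon as $J$ is shown to intertwine the two families of boundaries, the pre-cubical identities~\eqref{E:PreCub} for $(d_i,d'_i)$ follow by transport of structure from the pre-cubical structure $(d^{r,-}_i,d^{l,+}_i)$ supplied by Theorem~\ref{T:BrHom}, so Part~(1) requires no independent argument. Thus everything reduces to the intertwining relations, which—matching the right module $M$ on one side and the left module $N$ on the other—read
\[
J\circ d^{r,-}_i=d_i\circ J,\qquad\qquad J\circ d^{l,+}_i=d'_i\circ J
\]
as maps $C_k\to C_{k-1}$, the symbol~$J$ denoting the $k$- and the $(k-1)$-guitar map on source and target respectively.

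I would prove both relations diagrammatically, thinking of $J$ as the operation that piles the $k$ parallel strands into the nested-circle configuration of Figure~\ref{P:Birack_di}, so that the $j$-th rightmost arc acquires the color $J_j(\overline y)=y_j^{y_{j+1}\cdots y_k}$. The engine of both computations is the entwining relation $J\sigma_j=\sigma'_j J$ of Proposition~\ref{PR:Guitar}\,(\ref{I:Entwine}) (Figure~\ref{P:GuitarBr}): pushing $J$ across a crossing converts $\sigma$ into $\sigma'$, and since $\sigma'(a,b)=(b\ops a,a)$ the $\sigma'$-crossings merely transport one color past the others while recording the shelf operation~$\ops$, which is exactly the mechanism of the circle passages in Figure~\ref{P:AssShelfCircles}.

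For the right relation, $d^{r,-}_i$ braids the $i$-th strand to the far right past $y_{i+1},\dots,y_k$ and absorbs the emerging value $y_i^{y_{i+1}\cdots y_k}$ into $N$ via~$\lambda$. Commuting~$J$ through these crossings and reading off the new rightmost-arc colors should produce precisely the sideways outputs $z_i\wdot z_j$ (for $j<i$), $z_i\cdot z_j$ (for $j>i$) and the absorption $z_i\cdot n$ defining $d_i$, with $z_j=J_j(\overline y)$; the left-module axiom for $N$ (Figure~\ref{P:BrMod}) is what legitimises the final absorption step.

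The left relation is the mirror image, and here lies the main obstacle. Now $d^{l,+}_i$ braids the $i$-th strand to the far left, so the color absorbed into $M$ by~$\rho$ is $\prescript{y_1\cdots y_{i-1}}{}{y_i}$, whereas $d'_i$ absorbs $\chi_i(J(\overline y))$. The crux is therefore the identity
\[
\chi_i(J(\overline y))=\prescript{y_1\cdots y_{i-1}}{}{y_i},
\]
i.e.\ the verification that the definition~\eqref{E:chi} of $\chi_i$, built from the first component $J^{-1}_1$ of the inverse guitar map, indeed reproduces the color falling into~$M$. Geometrically this asserts that moving the $i$-th circle to the innermost position, whereupon its color becomes $(z_i\ops z_{i-1})\cdots\ops z_1$, and then undoing the piling via $J^{-1}_1$ returns the left-braided $y_i$; I expect to extract it from the entwining relation together with the circle calculus of Figure~\ref{P:AssShelfCircles}, using only the allowed R$\mathrm{II}$ moves and nicely oriented R$\mathrm{III}$ moves permitted by left non-degeneracy. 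Once this color-matching is in place, the surviving $X$-coordinates agree by the same entwining bookkeeping, which completes Part~(2) and hence the theorem.
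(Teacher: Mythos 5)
Your proposal is correct and follows essentially the same route as the paper: reduce Part~(1) to Part~(2) by bijectivity of~$J$, prove the two entwining relations $J\circ d^{r,-}_i=d_i\circ J$ and $J\circ d^{l,+}_i=d'_i\circ J$ graphically, and isolate as the crux the identity $\chi_i(J(\overline{y}))=\prescript{y_1\cdots y_{i-1}}{}{y_i}$. That crux is precisely the paper's Lemma~\ref{L:chi}, which it proves exactly as you anticipate, by the computation $\sigma_1\cdots\sigma_{i-1}=J^{-1}\sigma'_1\cdots\sigma'_{i-1}J$ from Proposition~\ref{PR:Guitar}\,(\ref{I:Entwine}) and comparison of first components.
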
 

As a consequence, the chain complexes obtained from these pre-cubical
structures via Theorem~\ref{PR:PreCub} are isomorphic.

Note that when the braided set is a birack and the coefficients $M,N$ are
trivial (Example~\ref{EX:Trivial}), the pre-cubical structure from our theorem
specializes to that from Theorem~\ref{T:Birack}. For braided sets associated to
cycle sets, the exotic map~$\chi_i$ takes the simpler form $ J^{-1}_1 (y_i,
y_1, \ldots, y_{i-1}, y_{i+1}, \ldots, y_k)$, and appears in Dehornoy's
right-cyclic calculus~\cite{MR3374524}.

As usual, the theorem remains true when the operations~$\cdot$ and~$\wdot$
exchange places; this yields a generalization of the structure $(d^\star_i,
d'_i)$ from Theorem~\ref{T:Birack}.

\begin{proof}
Since the guitar map is bijective, it suffices to prove that it entwines the structures in question, i.e. that one has 
\begin{align*}
J \circ d^{r,-}_i &= d_i \circ J, & J \circ d^{l,+}_i &= d'_i \circ J
\end{align*}
for all $n \geqslant 1$, $1 \leqslant i \leqslant n$. 
This would imply in particular that $(d_i,d'_i)$ is indeed a pre-cubical structure. A graphical proof is presented in Figure~\ref{P:HomIso}.    

\begin{figure}[h]
\begin{tikzpicture}[scale=0.45,>=latex]
 \draw [->] (0,9) arc [radius=4, start angle=90, end angle= 450]; 
 \draw [line width=4pt,white] (0,2.5) circle [radius=4]; 
 \draw [->] (0,6.5) arc [radius=4, start angle=90, end angle= 450]; 
 \path [draw,line width=4pt,white, rounded corners] (2,10) -- (0,4) arc (90:270:4) -- (1,-7.5);
 \draw [>-,red] (-4,0) arc [radius=4, start angle=180, end angle= 190]; 
 \path [draw, rounded corners,red] (2,10) -- (0,4) arc (90:270:4) -- (1,-7.5);
 \draw [line width=4pt,white] (0,-2.5) circle [radius=4]; 
 \draw [->] (0,1.5) arc [radius=4, start angle=90, end angle= 450];
 \draw [->, mygreen, thick] (-5,-7.5)--++(12,0);
 \draw [->, choco, thick] (-0.7,1.25)--(0.7,1.25);  
 \draw [myviolet, dashed] (-0.6,1.25)--(-2,-7.5); 
 \draw [myviolet, dashed] (0.6,1.25)--(2,-7.5); 
 \draw [myviolet, dashed] (4,5)--(4,-7.5);
 \node at (3.8,5) [right] {$\scriptstyle{{\color{red}y_3} \wdot y_1}$}; 
 \node at (3.8,2.5) [right] {$\scriptstyle{{\color{red}y_3} \wdot y_2}$}; 
 \node at (3.8,-2.5) [right] {$\scriptstyle{{\color{red}y_3} \cdot y_4}$}; 
 \node at (2.2,9.5) {$\scriptstyle{{\color{red}y_3}}$};    
 \node at (-0.2,0.6) {$\scriptstyle{x_1}$}; 
 \node at (-0.6,-1.8) {$\scriptstyle{x_2}$};  
 \node at (-0.9,-3.5) [red] {$\scriptstyle{x_3}$}; 
 \node at (-1.2,-6) {$\scriptstyle{x_4}$};  
 \node at (-4,-7.6) [mygreen,above] {$\scriptstyle{n}$}; 
 \node at (3.8,-7.2) [mygreen,above,right] {$\scriptstyle{{\color{red}y_3} \cdot n}$}; 
 \node at (0.6,-7.5) [red,above] {$\scriptstyle{y_3}$};  
 \node at (-2,-8) {$\scriptstyle{\overline{v}}$};   
 \node at (2,-8) {$\scriptstyle{d^{r,-}_3(\overline{v})}$};  
 \node at (5.5,-8) {$\scriptstyle{J(d^{r,-}_3(\overline{v}))}$};   
 \node at (5.5,-9.1) {$\scriptstyle{= d_3(J(\overline{v}))}$}; 
\end{tikzpicture}
\hspace*{20pt}
\begin{tikzpicture}[scale=0.45,>=latex]
 \node at (-6,0) { }; 
 \draw [->] (0,9) arc [radius=4, start angle=90, end angle= 450]; 
 \draw [line width=4pt,white] (0,2.5) circle [radius=4]; 
 \draw [->] (0,6.5) arc [radius=4, start angle=90, end angle= 450]; 
 \path [draw, rounded corners,line width=4pt,white]  (-1.2,1.8) arc (100:300:2.5) -- (0.7,2); 
 \path [->,draw, rounded corners,red]  (-1.2,1.8) arc (100:200:2.5); 
 \path [draw, rounded corners,red]  (-1.2,1.8) arc (100:300:2.5) -- (0.7,2); 
 \draw [line width=4pt,white] (0,-1) circle [radius=4]; 
 \draw [->] (0,3) arc [radius=4, start angle=90, end angle= 450];
 \draw [->, choco, thick] (-0.5,2)--(1.9,2); 
 \draw [->, mygreen, thick] (-5,-6)--++(11,0); 
 \draw [myviolet, dashed] (0,2)--(0,-6);  
 \draw [myviolet, dashed] (1.2,2)--(1.2,-6);  
 \draw [myviolet, dashed] (4,5)--(4,-6);   
 \node at (3.9,5) [right] {$\scriptstyle{y_1}$}; 
 \node at (3.9,2.5) [right] {$\scriptstyle{y_2}$}; 
 \node at (3.9,-1.5) [right] {$\scriptstyle{y_4}$}; 
 \node at (-0.4,0.6) {$\scriptstyle{x_1}$}; 
 \node at (-0.4,-1.8) {$\scriptstyle{x_2}$};  
 \node at (-0.4,-3.5) [red] {$\scriptstyle{x_3}$}; 
 \node at (-0.4,-5.4) {$\scriptstyle{x_4}$};
 \node at (-4,-6.1) [mygreen,above] {$\scriptstyle{n}$}; 
 \node at (-0.2,1.9) [choco,above] {$\scriptstyle{m}$};  
 \node at (1.4,1.9) [choco,above] {$\scriptstyle{m'}$};  
 \node at (-0.4,-6.5) {$\scriptstyle{\overline{v}}$};   
 \node at (1.7,-6.5) {$\scriptstyle{d^{l,+}_3(\overline{v})}$};  
 \node at (5.1,-6.5) {$\scriptstyle{J(d^{l,+}_3(\overline{v}))}$};   
 \node at (5.1,-7.6) {$\scriptstyle{= d'_3(J(\overline{v}))}$}; 
 \node at (3.8,-5.7) [mygreen,above,right] {$\scriptstyle{n}$}; 
\end{tikzpicture}
   \caption{Comparing~$d^{r,-}_i$ with~$d_i$, and~$d^{l,+}_i$ with~$d'_i$, $n=4$, $i=3$. Here $\overline{v} = (m,x_1,\ldots,x_4,n)$, and $J(\overline{v}) = (m,y_1,\ldots,y_4,n)$. On the left the brown $M$-colored strand has a constant color~$m$, and on the right its color changes from~$m$ to $m' = m \cdot ({}^{x_1  x_{2}}x_3) = m \cdot \chi_3 (\overline{y})$.}\label{P:HomIso}
\end{figure}
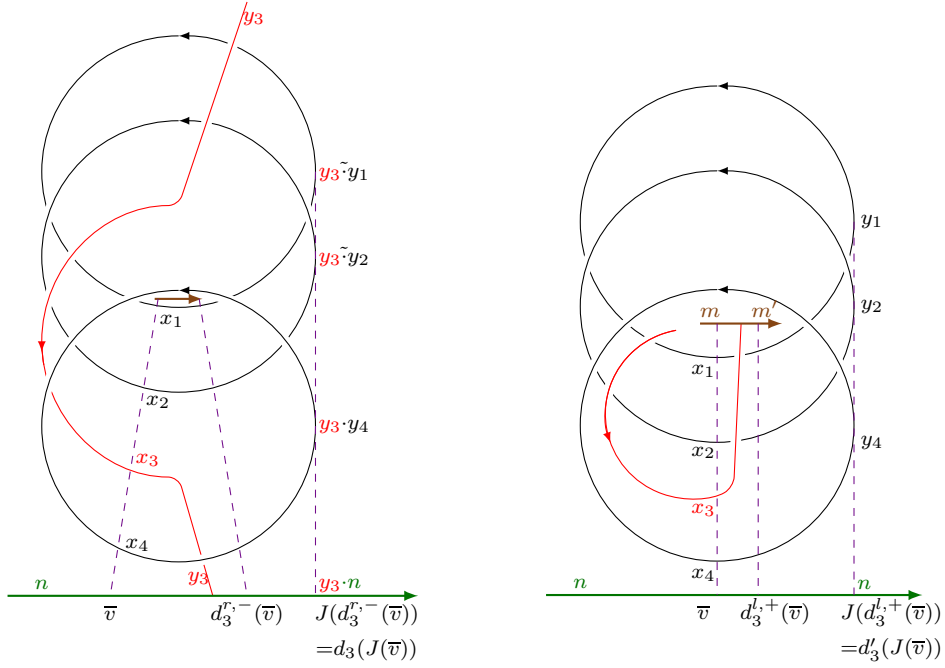
In this proof we worked with a slightly modified definition of the maps~$\chi_i$, in the sense of the following lemma.

\begin{lem}\label{L:chi}
The families of maps $\chi_i$ and $\chi'_i \colon X^{\times k} \to X$ defined by 
\begin{align}
\chi_i (\overline{y}) &= J^{-1}_1 ((y_i \ops y_{i-1}) \cdots \ops y_1, y_1, \ldots, y_{i-1}, y_{i+1}, \ldots, y_k),\notag\\
\chi'_i(\overline{y}) &= {}^{x_1\ldots x_{i-1}}x_i, \qquad \overline{x} = J^{-1}(\overline{y})\label{E:chi'}
\end{align}
(with notations from Figure~\ref{P:TX}), coincide. 
\end{lem}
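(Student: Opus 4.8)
The plan is to recognise both $\chi_i$ and $\chi'_i$ as two readings of the same operation---sliding the $i$-th strand to the front---performed on the two sides of the guitar map. First I would identify the $k$-tuple fed to $J^{-1}_1$ in the definition of $\chi_i$. Since the shelf braiding is $\sigma' \colon (a,b) \mapsto (b \ops a, a)$, applying $\sigma'_{i-1}$ to $\overline{y}$ turns the pair $(y_{i-1},y_i)$ into $(y_i \ops y_{i-1},\, y_{i-1})$; then $\sigma'_{i-2}$ carries $y_i \ops y_{i-1}$ one slot further left, yielding $(y_i \ops y_{i-1}) \ops y_{i-2}$, and so on. After the full sweep one obtains exactly
\[
\overline{z} := (\sigma'_1 \circ \sigma'_2 \circ \cdots \circ \sigma'_{i-1})(\overline{y}) = \bigl((y_i \ops y_{i-1}) \cdots \ops y_1,\; y_1, \ldots, y_{i-1}, y_{i+1}, \ldots, y_k \bigr),
\]
so that $\chi_i(\overline{y}) = J^{-1}_1(\overline{z})$.

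Next I would transport this sweep across the guitar map. The entwining relation $J\sigma_j = \sigma'_j J$ of Proposition~\ref{PR:Guitar}, point~\ref{I:Entwine}, rewrites as $\sigma'_j = J \sigma_j J^{-1}$, and telescoping the intermediate factors $J^{-1}J = \Id$ gives
\[
\sigma'_1 \circ \cdots \circ \sigma'_{i-1} = J \,(\sigma_1 \circ \cdots \circ \sigma_{i-1})\, J^{-1}.
\]
As $\overline{x} = J^{-1}(\overline{y})$, this yields $\overline{z} = J(\overline{w})$ with $\overline{w} := (\sigma_1 \circ \cdots \circ \sigma_{i-1})(\overline{x})$. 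By bijectivity of $J$ (Proposition~\ref{PR:Guitar}, point~\ref{I:Bij}) we then conclude $\chi_i(\overline{y}) = J^{-1}_1(J(\overline{w})) = w_1$, the first entry of $\overline{w}$.

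Finally I would read off $w_1$. The tuple $\overline{w}$ arises by sliding $x_i$ leftward past $x_{i-1}, \ldots, x_1$ using the original braiding $\sigma \colon (a,b) \mapsto ({}^a b, a^b)$; at each crossing $\sigma_j$ the current travelling colour is fed into the left output ${}^{x_j}(\cdot)$, so after $\sigma_1$ the leftmost colour has accumulated to ${}^{x_1}{}^{x_2} \cdots {}^{x_{i-1}} x_i = {}^{x_1 \cdots x_{i-1}} x_i$. Hence $w_1 = {}^{x_1 \cdots x_{i-1}} x_i = \chi'_i(\overline{y})$, and the two families coincide.

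I expect the only delicate point to be the bookkeeping: pinning down the order of composition (the rightmost $\sigma'_{i-1}$, resp.\ $\sigma_{i-1}$, acting first) and verifying that the successive left outputs of the $\sigma'$- and $\sigma$-sweeps are precisely the iterated products $(y_i \ops y_{i-1}) \cdots \ops y_1$ and ${}^{x_1 \cdots x_{i-1}} x_i$. Once these are settled, the coincidence is a purely formal consequence of the entwining property and the invertibility of $J$; the same argument can equally be drawn as a diagram, sliding the $i$-th guitar string to the front and comparing the two readings of its leftmost endpoint.
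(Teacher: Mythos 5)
Your proof is correct and follows essentially the same route as the paper: both rest on the entwining relation $J\sigma_j = \sigma'_j J$ and the bijectivity of~$J$ from Proposition~\ref{PR:Guitar}, combined with the explicit computation of the two sweeps $\sigma'_1\cdots\sigma'_{i-1}(\overline{y})$ and $\sigma_1\cdots\sigma_{i-1}(\overline{x})$, and then a comparison of first components. The only difference is presentational: the paper starts from $\sigma_1\cdots\sigma_{i-1}(\overline{x})$ and inserts $J^{-1}J$, whereas you start from the tuple defining $\chi_i$ and conjugate the $\sigma'$-sweep back through~$J$.
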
          
\begin{proof}
The properties of the guitar map (Proposition~\ref{PR:Guitar}) legitimize the following calculation:
\begin{align*}
\sigma_1 \cdots \sigma_{i-1} (\overline{x}) &= J^{-1}J \sigma_1 \cdots \sigma_{i-1} (\overline{x}) = J^{-1} \sigma'_1 \cdots \sigma'_{i-1} J(\overline{x}) = J^{-1} \sigma'_1 \cdots \sigma'_{i-1} (\overline{y})  
\\& = J^{-1} ((y_i \ops y_{i-1}) \cdots \ops y_1, y_1, \ldots, y_{i-1}, y_{i+1}, \ldots, y_k).
\end{align*}
The desired relation is obtained by comparing the first components of the resulting $k$-tuples. 
\qedhere\qedhere
\end{proof}
\end{proof}

\begin{rem}\label{R:AdjActionBis}
Note that the symbol~$\cdot$ in the expressions $y_i \cdot  y_k$ and $y_i \cdot n$ from the theorem has different meaning. In order to motivate this abuse of notation, we remark that the operations $y_i \cdot  y_k$ and $y_i \wdot  y_k$ both define a left action of $(X,\,\sigma)$ on itself; this is an easy consequence of the Yang--Baxter relation.
\end{rem}

\section{Degeneracies and a homology splitting}\label{S:Splitting} 

For a weakly R$\mathrm{I}$-compatible (Definition~\ref{D:BraidedZoo}) left non-degenerate braided set, we  now enrich the pre-cubical structure from Theorem~\ref{T:HomEquiv} into a semi-strong skew cubical structure. Theorem~\ref{PR:PreCub} then yields a decomposition of the corresponding chain complexes into the degenerate and the normalized parts, generalizing the homology decomposition for quandles from~\cite{MR1952425}.

Recall that for LND braided sets, weak R$\mathrm{I}$-compatibility is equivalent to the condition $a \cdot a = a \wdot a$ for all $a \in X$; in this case the map~$t$ from the definition is necessarily the squaring map $t(a)= a \cdot a$ (Example~\ref{EX:birack}). Recall also the maps~$\chi_i$ defined by~\eqref{E:chi} or, equivalently, by~\eqref{E:chi'}.

\begin{defn}
A right module $(M,\,\cdot)$ over a braided set $(X,\,\sigma)$ is called \emph{solid} if every $a \in X$ acts on~$M$ bijectively. In this case, the inverse of the bijection $m \mapsto m \cdot a$ is denoted by $m \mapsto m \cdot a^{-1}$. Solid left modules are defined similarly. 
\end{defn}

\begin{thm}\label{T:DegAndNormHom}
Let $(X,\,\sigma)$ be a weakly R$\mathrm{I}$-compatible left non-degenerate braided set. Let $(M,\,\cdot)$ be a solid right module and $(N,\,\cdot)$ a left module over $(X,\,\sigma)$. Then the pre-cubical structure $(M \times X^{\times k} \times N,\, d'_i,\, d_i)$ from Theorem~\ref{T:HomEquiv} can be enriched into a semi-strong skew cubical one by the degeneracies            
            \begin{align*}
                s_i &\colon (m, y_1, \ldots, y_k, n) \mapsto (m \cdot t(\chi_i(\overline{y}))^{-1}, y_1, \ldots, y_{i-1}, y_i, y_i, y_{i+1}, \ldots,  y_k, n).
            \end{align*}
Further, given an abelian group~$A$, the abelian groups $C_k(X,M,N;A) = A M \times X^{\times k} \times N$, $k \geqslant 0$, decompose as
        		\begin{align}
        		C_k(X,M,N;A) &= C^{\Degen}_k(X,M,N;A) \oplus C^{\Norm}_k(X,M,N;A),\label{E:Splitting}\\
        		C^{\Degen}_k(X,M,N;A) &= \sum\nolimits_{i=1}^{k-1} A\im s_i, \qquad
        		C^{\Norm}_k(X,M,N;A) = \im \eta_k, \notag
        		\end{align} 
        		where~$\eta_k$ is the $A$-linearization of the map
        		\begin{align*}
        		\eta_k &= (\Id - s_1d'_2)(\Id - s_2d'_3)\cdots(\Id - s_{k-1}d'_k).
        		\end{align*}
For any $\alpha, \beta \in \Z$, this decomposition is preserved by the differentials 
$$\partial^{(\alpha, \beta)}_k =\alpha\sum\nolimits_i (-1)^{i-1}d'_i + \beta \sum\nolimits_i (-1)^{i-1} d_i.$$
\end{thm}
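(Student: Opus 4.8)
The plan is to recognize this statement as a concrete instance of Theorem~\ref{PR:PreCub}(3): once the proposed $s_i$ are shown to upgrade the pre-cubical structure $(C_k,\,d'_i,\,d_i)$ (with $d'_i$ in the role of $d^+_i$ and $d_i$ in the role of $d^-_i$) into a \emph{semi-strong skew cubical} one, the decomposition~\eqref{E:Splitting}, the description $C^{\Norm}_k = \im\eta_k$, and the compatibility with every $\partial^{(\alpha,\beta)}_k$ follow verbatim from that theorem applied to the $A$-linearizations $C_k(X,M,N;A)$ in $\mathbf{Mod}_\Z$; the displayed formula for $\eta_k$ is exactly the general one of~\eqref{E:SplittingGeneral} since $d'_i=d^+_i$. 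So the entire content is the verification of the cubical axioms. I would first dispatch well-definedness: the factor $m\cdot t(\chi_i(\overline y))^{-1}$ makes sense precisely because $M$ is \emph{solid}, so the element $t(\chi_i(\overline y))\in X$ acts invertibly, and because weak R$\mathrm I$-compatibility guarantees $t(a)=a\cdot a=a\wdot a$ (Example~\ref{EX:birack}), so $t(\chi_i(\overline y))$ is a genuine element of~$X$.

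For the axioms I would argue graphically in the colored-circle model of Figures~\ref{P:Birack_di}--\ref{P:Birack_di'}, where $d_i$ inflates the $i$-th circle, $d'_i$ shrinks it while modifying the $M$-color by $\chi_i$ (Figure~\ref{P:HomIso}), and $s_i$ \emph{doubles} the $i$-th circle while compensating the $M$-color by $t(\chi_i)^{-1}$. The far-apart commutations~\eqref{E:WeakCub} and~\eqref{E:WeakCub2} (for $i<j$ and $i>j+1$) reduce to the independence of inflation/shrinking and doubling of disjoint circles, together with the index shift $j\mapsto j\pm1$ and a routine transport of the $\chi$- and $t(\chi)^{-1}$-factors; equivalently, one may transport the whole situation to the braided side through the guitar isomorphism $J=\Id_M\times J\times\Id_N$ of Theorem~\ref{T:HomEquiv} and exploit $J\sigma_i=\sigma'_iJ$.

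The crux is the collapse axiom~\eqref{E:WeakCub3'} for $\varepsilon=+$, namely $d'_is_i=d'_{i+1}s_i=\Id$. Writing $\overline y''$ for $\overline y$ with $y_i$ doubled, both compositions delete one of the two equal circles colored~$y_i$, leaving the $X$- and $N$-coordinates equal to the original $\overline y,n$; thus the only thing to prove is that the two $M$-color modifications cancel, i.e.\ $\chi_i(\overline y'')=t(\chi_i(\overline y))=\chi_{i+1}(\overline y'')$. Using the description $\chi_i(\overline y)={}^{x_1\cdots x_{i-1}}x_i$ with $\overline x=J^{-1}(\overline y)$ from Lemma~\ref{L:chi}, this is exactly the color a doubled strand acquires under the R$\mathrm I$-compatibility relation $\sigma(t(a),a)=(t(a),a)$ (Figure~\ref{P:RI}): the two inserted circles are genuinely R$\mathrm I$-related, and $t$ records the color jump of the redundant one. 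This is the step where weak R$\mathrm I$-compatibility is indispensable, and it is the \textbf{main obstacle}, since all the $M$-coordinate bookkeeping through $\chi$ concentrates here. By contrast, the $\varepsilon=-$ case yields only $d_is_i=d_{i+1}s_i$ (relation~\eqref{E:WeakCub3}, \emph{not} $\Id$): inflating either of the two equal circles produces the same diagram by $a\cdot a=a\wdot a$, but inflation genuinely recolors the remaining circles and the $N$-coordinate, so no further collapse occurs. This is precisely why the structure is semi-strong skew cubical rather than skew cubical.

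Finally, the degeneracy relation~\eqref{E:WeakCubDeg}, $s_is_j=s_{j+1}s_i$ for $i\leqslant j$, amounts to doubling two circles in either order, matching after the reindexing $j\mapsto j+1$; one checks only that the two compensating automorphisms $t(\chi_\bullet)^{-1}$ of $M$ compose identically, which again follows from the same $\chi$-transport identities. With the weak skew cubical axioms~\eqref{E:WeakCub}--\eqref{E:WeakCub3}, the strengthening~\eqref{E:WeakCub3'} for $\varepsilon=+$, and~\eqref{E:WeakCubDeg} established, Theorem~\ref{PR:PreCub}(3) immediately produces the decomposition~\eqref{E:Splitting}, the identification $C^{\Norm}_k=\im\eta_k$, and its preservation by each differential $\partial^{(\alpha,\beta)}_k$, completing the proof.
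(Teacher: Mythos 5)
Your skeleton agrees with the paper's proof: check that $(d'_i,\,d_i,\,s_i)$ is semi-strong skew cubical (with $d'_i$ playing $d^+_i$), then linearize and quote Theorem~\ref{PR:PreCub}(3); your well-definedness remarks and your reduction of the collapse axiom~\eqref{E:WeakCub3'} to $\chi_i(s_i(\overline{y}))=\chi_{i+1}(s_i(\overline{y}))=t(\chi_i(\overline{y}))$ are also exactly the paper's. The genuine gap is your claim that the far-apart axioms \eqref{E:WeakCub}, \eqref{E:WeakCub2} (and \eqref{E:WeakCubDeg}) amount to independence of disjoint circles plus a \emph{routine} transport of the $\chi$- and $t(\chi)^{-1}$-factors. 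That transport is the heart of the proof, and it is not routine, because $M$ is only a \emph{braided} module: successive actions on the $M$-coordinate do not commute but braid, $(m\cdot a)\cdot b=(m\cdot{}^{a}b)\cdot a^{b}$. Concretely, $d'_i s_j = s_j d'_{i-1}$ for $i>j+1$ holds on the $M$-component precisely when
\[
\sigma\bigl(t(\chi_j(\overline{y})),\,\chi_{i-1}(\overline{y})\bigr)=\bigl(\chi_i(s_j(\overline{y})),\,t(\chi_j(d'_{i-1}(\overline{y})))\bigr),
\]
which is relation~\eqref{E:ChiProperty} of the paper; the remaining axioms need a list of analogous identities relating the $\chi_\bullet$ and $t(\chi_\bullet)$ evaluated on $\overline{y}$, $s_j(\overline{y})$ and $d'_\bullet(\overline{y})$ across the braiding. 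The paper proves these graphically (Figure~\ref{P:ChiProperty}), the engine being Lemma~\ref{L:Kink}: a neighbouring colour pattern $(t(a),a)$ is preserved when a pair of strands passes under another strand. That lemma is exactly where weak R$\mathrm{I}$-compatibility enters, so the hypothesis is needed throughout all the degeneracy axioms, not only at the collapse axiom as you assert (though you are right that it also forces $d_is_i=d_{i+1}s_i$ via $a\cdot a=a\wdot a$).

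Your fallback of transporting everything through the guitar isomorphism to the braided side does not dissolve the difficulty either: the paper mentions this as an alternative, but cautions that the diagrammatic calculus for $(d^{l,+}_i,\,d^{r,-}_i,\,s^{\Braided}_i)$ (Figure~\ref{P:si_braided}) is presented for intuition only, and making it rigorous requires justifying badly oriented R$\mathrm{III}$ moves and colouring rules at crossings of the $M$-strand with $X$-strands. In short, what is missing from your proposal is the family of braided-compatibility identities for the maps $\chi_i$ together with Lemma~\ref{L:Kink} that proves them; without these, the hardest part of the theorem is asserted rather than proved.
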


As usual, decomposition~\eqref{E:Splitting} induces homology splittings.

Recall that the guitar map sends the pre-cubical structure $(d^{l,+}_i,\, d^{r,-}_i)$ isomorphically onto the structure $(d'_i,\, d_i)$ (Theorem~\ref{T:HomEquiv}). Hence the maps $J^{-1}s_iJ$ yield degeneracies for  $(d^{l,+}_i,\, d^{r,-}_i)$, explicitly written as
    \begin{align*}
	s^{\Braided}_i \colon (m, \overline{x}_{-}, x, \overline{x}_{+}, n) &\mapsto (m \cdot t(\prescript{(\overline{x}_{-})}{}{x})^{-1}, t(x) \wdot \overline{x}_{-}, t(x), x, \overline{x}_{+}, n), 
	\end{align*} 
	where $\overline{x}_{-} \in X^{\times (i-1)}$, $\overline{x}_{+} \in X^{\times (k-i)}$, and the operation~$\wdot$ is extended from~$X$ to~$T(X)$ using the extension to~$T(X)$ of the braiding~$\sigma$ (Example~\ref{EX:Adjoint}). Here and afterwards we use the subscript~$\Braided$ or the prefix~$B$ when referring to the braided homology. The modification of the $M$-component, which seemed surprising in the definition of~$s_i$, becomes more intuitive on the level of~$s^{\Braided}_i$: indeed, it can be read off from Figure~\ref{P:si_braided}. This figure is presented here for giving intuition only; to make thing rigorous, one should explain the use of badly oriented R$\mathrm{III}$ moves and the coloring rules around a crossing between an $X$-strand and an $M$-strand (which make possible appropriate R$\mathrm{II}$ and R$\mathrm{III}$ moves).
	
\begin{figure}[h]
\begin{tikzpicture}[xscale=0.8,>=latex]
 \draw [->, thick] (-1,0) -- (-1,2.5);
 \draw [->, ultra thick] (0.5,0) -- (0.5,2.5);
 \draw [line width=5pt,white, rounded corners=10] (-1.7,1.5) -- (-1.7,1) -- (1.5,1) -- (2,2) -- (2,2.5);
 \draw [->,red, rounded corners=10] (-1.7,1.5) -- (-1.7,1) -- (1.5,1) -- (2,2) -- (2,2.5);
 \draw [line width=5pt,white, rounded corners=10] (2,0) -- (2,1) -- (1.5,2) -- (-1.7,2) -- (-1.7,1.5);
 \draw [red, rounded corners=10] (2,0) -- (2,1) -- (1.5,2) -- (-1.7,2) -- (-1.7,1.5);
 \draw [->, ultra thick] (3,0) -- (3,2.5);
 \draw [->, thick] (4,0) -- (4,2.5);
 \draw [myviolet, dashed] (-1,0.3) -- (4.5,0.3); 
 \draw [myviolet, dashed] (-1,1.2) -- (4.5,1.2); 
 \node at (-1,-0.2) {$\scriptstyle{m}$};  
 \node at (0.5,-0.2) {$\scriptstyle{\overline{x}_-}$};
 \node at (2,-0.2) {$\scriptstyle{x}$};
 \node at (3,-0.2) {$\scriptstyle{\overline{x}_+}$};
 \node at (4,-0.2) {$\scriptstyle{n}$};
 \node at (-1,2.7) {$\scriptstyle{m}$};  
 \node at (0.5,2.7) {$\scriptstyle{\overline{x}_-}$};
 \node at (2,2.7) {$\scriptstyle{x}$};
 \node at (3,2.7) {$\scriptstyle{\overline{x}_+}$};
 \node at (4,2.7) {$\scriptstyle{n}$};
 \node at (-1.9,1.5) [myblue] {$\scriptstyle{a}$}; 
 \node at (-0.4,1.7) [myblue] {$\scriptstyle{m \cdot a^{-1}}$};
 \node at (-0.5,2.15) [myblue] {$\scriptstyle{a}$};   
 \node at (-0.5,0.85) [myblue] {$\scriptstyle{a}$}; 
 \node at (1.2,2.2) [myblue] {$\scriptstyle{t(x)}$};   
 \node at (1.2,0.8) [myblue] {$\scriptstyle{t(x)}$}; 
 \node at (4.5,0.1) [myviolet] {$\scriptstyle{\overline{t}}$};
 \node at (4.7,1.1) [myviolet] {$\scriptstyle{s^{\Braided}_i(\overline{t})}$};
 \node  at (6.6,1.5){\Large \color{mygreen} $\overset{\mathrm{RIII}}{\longleftrightarrow}$};
\end{tikzpicture}
\begin{tikzpicture}[xscale=0.8,>=latex]
 \node  at (-2,1.5){ };
 \draw [->, thick] (-1.5,0) -- (-1.5,2.5);
 \draw [->, ultra thick, rounded corners] (1.25,0) --  (1.7,1.25) -- (1.25,2.5);
 \draw [line width=5pt,white, rounded corners=10] (-0.5,1.25) -- (-0.5,0.8) -- (0.3,1) -- (2,2.2) -- (2,2.5);
 \draw [-<,red, rounded corners=10] (-0.5,1.25) -- (-0.5,0.8) -- (0.3,1) -- (2,2.2) -- (2,2.5);
 \draw [line width=5pt,white, rounded corners=10] (2,0) -- (2,0.3) -- (0.3,1.5) -- (-0.5,1.7) -- (-0.5,1.25);
 \draw [red, rounded corners=10] (2,0) -- (2,0.3) -- (0.3,1.5) -- (-0.5,1.7) -- (-0.5,1.25);
 \draw [->, ultra thick] (2.75,0) -- (2.75,2.5);
 \draw [->, thick] (3.5,0) -- (3.5,2.5);
 \node at (-1.5,-0.2) {$\scriptstyle{m}$};  
 \node at (1.25,-0.2) {$\scriptstyle{\overline{x}_-}$};
 \node at (2,-0.2) {$\scriptstyle{x}$};
 \node at (2.75,-0.2) {$\scriptstyle{\overline{x}_+}$};
 \node at (3.5,-0.2) {$\scriptstyle{n}$};
 \node at (-1.5,2.7) {$\scriptstyle{m}$};  
 \node at (1.25,2.7) {$\scriptstyle{\overline{x}_-}$};
 \node at (2,2.7) {$\scriptstyle{x}$};
 \node at (2.75,2.7) {$\scriptstyle{\overline{x}_+}$};
 \node at (3.5,2.7) {$\scriptstyle{n}$};
 \node at (-0.7,1.4) [myblue] {$\scriptstyle{=}$}; 
 \node at (-0.7,1.1) [myblue] {$\scriptstyle{a}$};
 \node at (-0.3,1.8) [red] {$\scriptstyle{t(\prescript{(\overline{x}_{-})}{}{x})}$};  
 \node at (0.8,0.7) [red] {$\scriptstyle{\prescript{(\overline{x}_{-})}{}{x}}$};  
\end{tikzpicture}
   \caption{A graphical definition of~$s^{\Braided}_i$ (left) and a computation of the element $a\in X$ acting on $m \in M$ (right).}\label{P:si_braided}
\end{figure}
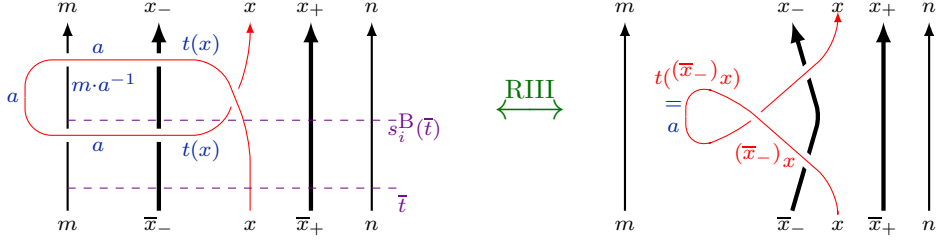	

Further, decomposition~\eqref{E:Splitting} implies the decomposition
\begin{align*}
C_k(X,M,N;A) &= J^{-1}(C^{\Degen}_k(X,M,N;A)) \oplus J^{-1}(C^{\Norm}_k(X,M,N;A))
\end{align*}
(the maps~$J$ and~$J^{-1}$ are extended by linearity), preserved by the differentials 
$$\partial^{\Braided,(\alpha, \beta)}_k =\alpha\sum\nolimits_i (-1)^{i-1}d^{l,+}_i + \beta \sum\nolimits_i (-1)^{i-1} d^{r,-}_i.$$ 
Explicitly, one has the following result:
\begin{cor}
In the context of Theorem~\ref{T:DegAndNormHom}, for every $\alpha,\beta \in \Z$ the chain complex $(C_k(X,M,N; A),\, \partial^{\Braided,(\alpha, \beta)}_k)$ admits the following direct summand:
    \begin{align*}
    BC^{\Degen}_k(X,M,N;A) &= \sum AM \times X^{\times (i-1)} \times (x \cdot x,x)\times X^{\times (k-i-1)} \times N,
	\end{align*}     
	where the sum is over all $1 \leqslant i \leqslant k-1$ and $x \in X$.
\end{cor}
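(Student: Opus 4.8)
The plan is to reduce the statement to the transported splitting already recorded just before the corollary, and then to make the degenerate summand explicit. Recall that the extended guitar map $J = \Id_M \times J \times \Id_N$ is a bijective isomorphism of pre-cubical structures (Theorem~\ref{T:HomEquiv}), so conjugation by $J$ turns the degeneracies $s_i$ of Theorem~\ref{T:DegAndNormHom} into the braided degeneracies $s^{\Braided}_i = J^{-1} s_i J$. The module splitting $C_k = C^{\Degen}_k \oplus C^{\Norm}_k$ then pulls back to $C_k = J^{-1}(C^{\Degen}_k) \oplus J^{-1}(C^{\Norm}_k)$, and this is preserved by each $\partial^{\Braided,(\alpha, \beta)}_k$. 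Hence $J^{-1}(C^{\Degen}_k)$ is automatically a direct-summand subcomplex, and the only remaining task is to identify it with the explicit module $BC^{\Degen}_k$.

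First I would rewrite $J^{-1}(C^{\Degen}_k)$ in terms of the braided degeneracies. Since $J$ is $A$-linear and bijective and $C^{\Degen}_k = \sum_i A\im s_i$, linearity gives $J^{-1}(C^{\Degen}_k) = \sum_i A\, J^{-1}(\im s_i)$. Because $J$ is a bijection on each $C_{k-1}$, one has $J^{-1}(\im s_i) = J^{-1} s_i J(C_{k-1}) = \im s^{\Braided}_i$, so that $J^{-1}(C^{\Degen}_k) = \sum_{i=1}^{k-1} A\im s^{\Braided}_i$. It therefore suffices to compute the set-theoretic image of each $s^{\Braided}_i$.

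Next I would read $\im s^{\Braided}_i$ off the formula
\[
s^{\Braided}_i(m, \overline{x}_{-}, x, \overline{x}_{+}, n) = (m \cdot t(\prescript{(\overline{x}_{-})}{}{x})^{-1},\, t(x) \wdot \overline{x}_{-},\, t(x),\, x,\, \overline{x}_{+},\, n).
\]
The $(i,i+1)$-st output entries are always $(t(x), x) = (x \cdot x, x)$ by weak R$\mathrm{I}$-compatibility (where $t$ is the squaring map, Example~\ref{EX:birack}), so the image lies in $M \times X^{\times (i-1)} \times \{(x\cdot x, x) : x \in X\} \times X^{\times (k-i-1)} \times N$. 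For the reverse inclusion I would recover a preimage of an arbitrary element $(m', \overline{z}, x\cdot x, x, \overline{w}, n')$ of this set: take $x$, $\overline{w}$, $n'$ as they stand, solve $t(x)\wdot \overline{x}_{-} = \overline{z}$ for $\overline{x}_{-}$, and solve $m \cdot t(\prescript{(\overline{x}_{-})}{}{x})^{-1} = m'$ for $m$. Summing over $i$ then gives $\sum_i A\im s^{\Braided}_i = BC^{\Degen}_k$, completing the identification and hence the proof.

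The main obstacle is the reverse inclusion in this image computation, that is, showing $s^{\Braided}_i$ surjects onto the displayed set; this is precisely where the two hypotheses enter. Solidity of $M$ makes $m \mapsto m \cdot t(\prescript{(\overline{x}_{-})}{}{x})$ invertible, allowing $m$ to be recovered, while the extended operation $\overline{x}_{-} \mapsto t(x) \wdot \overline{x}_{-}$ is a bijection of $X^{\times (i-1)}$ (it is assembled from the sideways maps, well defined by left non-degeneracy, and is a genuine left action by Remark~\ref{R:AdjActionBis}), allowing $\overline{x}_{-}$ to be recovered. Once these two inversions are justified, the preimage above is well defined and unique, so $\im s^{\Braided}_i$ equals the claimed set exactly.
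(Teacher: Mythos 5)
Your proof is correct and takes essentially the same route as the paper: the corollary is precisely the transported splitting $C_k(X,M,N;A) = J^{-1}(C^{\Degen}_k)\oplus J^{-1}(C^{\Norm}_k)$ recorded just before it, combined with the explicit formula $s^{\Braided}_i = J^{-1}s_iJ$, so that $J^{-1}(C^{\Degen}_k)=\sum_i A\im s^{\Braided}_i$ is read off as the displayed module. The only difference is that the paper leaves the identification of $\im s^{\Braided}_i$ implicit, whereas you spell out the surjectivity argument (solidity of~$M$ for the $M$-component, bijectivity of $\overline{x}_-\mapsto t(x)\wdot\overline{x}_-$ from left non-degeneracy for the $X$-components), which is a correct and welcome, if routine, completion.
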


For the proof of Theorem~\ref{T:DegAndNormHom} we shall need the following lemma.

\begin{lem}\label{L:Kink}
Let $(X,\,\sigma)$ be a weakly R$\mathrm{I}$-compatible LND braided set. Consider the relation $(a_1,a_2)^b=(c_1,c_2)$ in $T(X)$. Then condition $a_1 = t(a_2)$ is equivalent to $c_1 = t(c_2)$. The same equivalence holds for the relation $\prescript{b}{}{(a_1,a_2)}=(c_1,c_2)$.
\end{lem}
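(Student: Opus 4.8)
The plan is to reduce the lemma to a statement about \emph{fixed pairs} of the braiding and then to follow this property along the adjoint action. First I would record the pointwise characterization
\[
a_1 = t(a_2) \iff \sigma(a_1,a_2)=(a_1,a_2) \iff a_1^{a_2}=a_2 ,
\]
valid for any weakly R$\mathrm{I}$-compatible LND braiding. The first equivalence is immediate from $\sigma(t(a),a)=(t(a),a)$; for the reverse implication of the second, I use that $x\mapsto x^{a_2}$ is a bijection by left non-degeneracy, so $a_1^{a_2}=a_2=t(a_2)^{a_2}$ forces $a_1=t(a_2)$, whence $\prescript{a_1}{}{a_2}=a_1$ follows from R$\mathrm{I}$-compatibility. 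Thus ``$a_1=t(a_2)$'' means exactly that $(a_1,a_2)$ is a fixed point of $\sigma$; graphically, the crossing between the two strands is colour-invisible, and by R$\mathrm{I}$-compatibility the pair is a single $a_2$-coloured strand carrying a removable R$\mathrm{I}$ kink.

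For the right-hand relation $(a_1,a_2)^b=(c_1,c_2)$ this gives a short algebraic proof via the guitar map. Since $J(a_1,a_2)=(a_1^{a_2},a_2)$, the characterization reads $a_1=t(a_2) \iff J_1(a_1,a_2)=J_2(a_1,a_2)$. By Proposition~\ref{PR:Guitar}, $J$ intertwines the adjoint right action with the component-wise action~$\action$ of~\eqref{E:FirstAction}, so $J(c_1,c_2)=J(a_1,a_2)\action b=(J_1(a_1,a_2)^{b},J_2(a_1,a_2)^{b})$. As $x\mapsto x^{b}$ is a bijection of~$X$ for every $b\in T(X)$ (a composite of left translations), one has $J_1(c_1,c_2)=J_2(c_1,c_2)$ if and only if $J_1(a_1,a_2)=J_2(a_1,a_2)$, i.e. $c_1=t(c_2)\iff a_1=t(a_2)$. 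Equivalently, writing $T=\sigma_1\sigma_2$ on $X^{\times 3}$, the Yang--Baxter identity $\sigma_2 T=T\sigma_1$ together with the injectivity of~$T$ on triples with fixed last entry (again left non-degeneracy) yields both implications at once.

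The left-hand relation $\prescript{b}{}{(a_1,a_2)}=(c_1,c_2)$ is the delicate one, and I expect it to be the main obstacle. The forward implication still follows from the Yang--Baxter equation: with $T'=\sigma_2\sigma_1$ one has $\sigma_1 T'=T'\sigma_2$, so a fixed pair stays fixed after~$b$ is pushed through. The converse, however, resists the same peeling, since undoing the passage of~$b$ would require right non-degeneracy: the evolved colours $\prescript{b}{}{a_1}$ and $\prescript{b^{a_1}}{}{a_2}$ are right-translation images and need not be recoverable. Here I would invoke the graphical capping trick already used in the proof of Theorem~\ref{T:BijG} (Figures~\ref{P:ActOnInverse} and~\ref{P:KinkTrick}): read $c_1=t(c_2)$ as a removable kink, cap the two strands together at the top, and slide this cap back down through the bundle~$b$. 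The essential point—and the crux of the argument—is that in the presence of a cap the reverse passage of~$b$ is realized entirely by \emph{allowed} Reidemeister~$\mathrm{II}$ moves (Definition~\ref{D:AllowedRII}), so that no inverse of~$\sigma$ is needed; once the cap has been slid down it exhibits $(a_1,a_2)$ as a fixed pair, giving $a_1=t(a_2)$. Read upward, the same picture recovers the forward direction, so the left and right versions are handled uniformly. The one step requiring real care is checking that every move used while sliding the cap through~$b$ is among the allowed ones, which is precisely what left non-degeneracy and weak R$\mathrm{I}$-compatibility guarantee.
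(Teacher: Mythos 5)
Your treatment of the right-hand relation $(a_1,a_2)^b=(c_1,c_2)$ is correct, and it takes a genuinely different route from the paper's. The paper proves it by the graphical R$\mathrm{III}$ computation of Figure~\ref{P:KinkProof}: the colour-invisible crossing of the pattern pair is pushed through the $b$-strand, and the converse direction is recovered because the passage $\overline{x}\mapsto \overline{x}^{\,b}$ can be undone using left non-degeneracy. Your guitar-map argument (transport the condition through $J$, where it becomes $J_1=J_2$, then act componentwise by the bijection $x\mapsto x^b$ via Proposition~\ref{PR:Guitar}) is a clean algebraic alternative, and your parenthetical variant ($\sigma_2T=T\sigma_1$ plus injectivity of $T$ on triples with fixed last entry) is precisely the paper's diagrammatic argument written out algebraically. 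Your preliminary characterization $a_1=t(a_2)\iff \sigma(a_1,a_2)=(a_1,a_2)\iff a_1^{a_2}=a_2$ is also correct and is what both arguments rest on.

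The gap is in the left-hand relation, exactly where you feared. Your forward implication (via $\sigma_1T'=T'\sigma_2$) is fine, but the capping trick cannot rescue the converse: caps, cups and the sliding of a cap through a bundle live in the doubled braided set $(\oX,\,\osigma)$ of Theorem~\ref{T:BijG}, whose construction requires $\sigma$ to be invertible and non-degenerate on \emph{both} sides; in the merely LND setting of the lemma these moves are not even defined, and the allowed R$\mathrm{II}$ moves of Definition~\ref{D:AllowedRII} require coinciding end colours — which is the very thing you are trying to prove. Moreover, no repair can exist, because the converse implication is false in this generality: take the braiding $\sigma(a,b)=(e,a\star b)$ of a nontrivial group (Example~\ref{EX:monoid}), which is LND and weakly R$\mathrm{I}$-compatible with $t\equiv e$. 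Since $\prescript{x}{}{y}=e$ for all $x,y$, one gets $\prescript{b}{}{(a_1,a_2)}=(\prescript{b}{}{a_1},\,\prescript{(b^{a_1})}{}{a_2})=(e,e)$, so $c_1=t(c_2)$ holds for \emph{every} pair, while $a_1=t(a_2)$ forces $a_1=e$. So your diagnosis was right and your proposed fix is the gap; note that the paper's own one-line treatment (``the second one is similar'') has the same defect, since the reverse step of the R$\mathrm{III}$ argument would need injectivity of $x\mapsto\prescript{b}{}{x}$, i.e.\ right non-degeneracy. For the left-hand relation only the implication $a_1=t(a_2)\Rightarrow c_1=t(c_2)$ survives.
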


\begin{proof}
The first equivalence is established in Figure~\ref{P:KinkProof}, the second one is similar. \qedhere

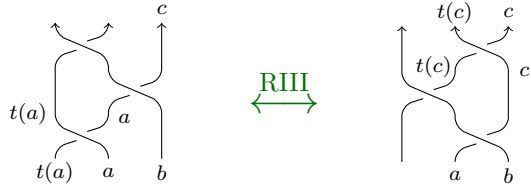
\begin{figure}[h]
\begin{tikzpicture}[xscale=0.7,yscale=0.6]
\draw [rounded corners](0,0)--(0,0.25)--(0.4,0.4);
\draw [rounded corners](0.6,0.6)--(1,0.75)--(1,1.25)--(1.4,1.4);
\draw [rounded corners,->](1.6,1.6)--(2,1.75)--(2,3);
\draw [rounded corners](1,0)--(1,0.25)--(0,0.75)--(0,2.25)--(0.4,2.4);
\draw [rounded corners,->](0.6,2.6)--(1,2.75)--(1,3);
\draw [rounded corners,->](2,0)--(2,1.25)--(1,1.75)--(1,2.25)--(0,2.75)--(0,3);
\node  at (0,-0.3) {$\scriptstyle t(a)$};
\node  at (1,-0.3) {$\scriptstyle a$};
\node  at (2,-0.3) {$\scriptstyle b$};
\node  at (2,3.3) {$\scriptstyle c$};
\node  at (4.3,1.5){\Large \color{mygreen} $\overset{\mathrm{RIII}}{\longleftrightarrow}$};
\node  at (-0.5,1) {$\scriptstyle t(a)$};
\node  at (1.3,0.9) {$\scriptstyle a$};
\end{tikzpicture}
\begin{tikzpicture}[xscale=0.7,yscale=0.6]
\node  at (-1,1.5){$ $};
\draw [rounded corners](1,1)--(1,1.25)--(1.4,1.4);
\draw [rounded corners,->](1.6,1.6)--(2,1.75)--(2,3.25)--(1,3.75)--(1,4);
\draw [rounded corners](0,1)--(0,2.25)--(0.4,2.4);
\draw [rounded corners](0.6,2.6)--(1,2.75)--(1,3.25)--(1.4,3.4);
\draw [rounded corners,->](1.6,3.6)--(2,3.75)--(2,4);
\draw [rounded corners,->](2,1)--(2,1.25)--(1,1.75)--(1,2.25)--(0,2.75)--(0,4);
\node  at (1,4.3) {$\scriptstyle t(c)$};
\node  at (1,0.7) {$\scriptstyle a$};
\node  at (2,0.7) {$\scriptstyle b$};
\node  at (2,4.3) {$\scriptstyle c$};
\node  at (0.6,3.1) {$\scriptstyle t(c)$};
\node  at (2.3,3) {$\scriptstyle c$};
\end{tikzpicture}
\caption{The colors on the left-hand side have the indicated pattern if and only if they do so on the right. This shows that the neighbouring colors $(t(x),x)$ remain of the same type when passing (in any direction) under another strand.}\label{P:KinkProof}
\end{figure}
\end{proof}

\begin{proof}[Proof of Theorem~\ref{T:DegAndNormHom}]
The verification of the semi-strong skew cubical relations \eqref{E:WeakCub}-\eqref{E:WeakCub3'} is easy on the $X$- and $N$-components of $M \times T(X) \times N$. One should be more careful with how the left- and the right-hand sides of these relations modify the $M$-component. For example, on the level of the $M$-components, relation $d'_i s_j = s_{j}d'_{i-1}$ for $i > j+1$ reads
$$(m \cdot t(\chi_j(\overline{y}))^{-1}) \cdot \chi_i(s_j(\overline{y})) = (m \cdot \chi_{i-1}(\overline{y})) \cdot  t(\chi_j(d'_{i-1}(\overline{y})))^{-1}$$
for all $\overline{y} \in X^{\times k}$, $m \in M$ (here the maps~$s_j$ and~$d'_{i-1}$ are used with trivial coefficients). This is equivalent to the relation
$$(m \cdot \chi_i(s_j(\overline{y}))) \cdot t(\chi_j(d'_{i-1}(\overline{y}))) = (m \cdot t(\chi_j(\overline{y}))) \cdot \chi_{i-1}(\overline{y}).$$
Since~$M$ is a braided module, it is sufficient to show the property
\begin{align}\label{E:ChiProperty}
\sigma(t(\chi_j(\overline{y})), \chi_{i-1}(\overline{y})) &= (\chi_i(s_j(\overline{y})), t(\chi_j(d'_{i-1}(\overline{y})))),
\end{align}
which we establish in Figure~\ref{P:ChiProperty}.

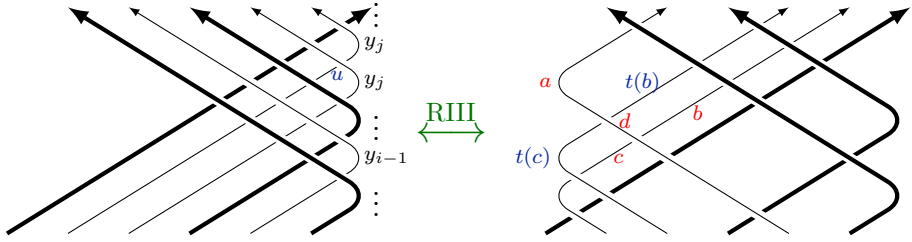
\begin{figure}[h]
\begin{tikzpicture}[xscale=0.8,>=latex]
 \draw [->, rounded corners=10, ultra thick] (-1,0) -- (5,3);
 \draw [line width=4pt,white, rounded corners=10] (0,0) -- (5,2.5) -- (4,3); 
 \draw [->, rounded corners=10] (0,0) -- (5,2.5) -- (4,3);
 \draw [line width=4pt,white, rounded corners=10] (1,0) -- (5,2) -- (3,3); 
 \draw [->, rounded corners=10] (1,0) -- (5,2) -- (3,3); 
 \draw [line width=4pt,white, rounded corners=10] (2,0) -- (5,1.5) -- (2,3); 
 \draw [->, rounded corners=10, ultra thick] (2,0) -- (5,1.5) -- (2,3);
 \draw [line width=4pt,white, rounded corners=10] (3,0) -- (5,1) -- (1,3); 
 \draw [->, rounded corners=10] (3,0) -- (5,1) -- (1,3); 
 \draw [line width=4pt,white, rounded corners=10] (4,0) -- (5,0.5) -- (0,3);   
 \draw [->, rounded corners=10, ultra thick] (4,0) -- (5,0.5) -- (0,3);  
 \node at (4.7,2.5) [right] {$\scriptstyle{y_j}$};   
 \node at (4.7,2) [right] {$\scriptstyle{y_j}$}; 
 \node at (4.15,2.1) [right,myblue] {$\scriptstyle{u}$}; 
 \node at (4.7,3) [right] {$\;\scriptstyle{\vdots}$};  
 \node at (4.7,1.5) [right] {$\;\scriptstyle{\vdots}$}; 
 \node at (4.7,0.5) [right] {$\;\scriptstyle{\vdots}$}; 
 \node at (4.7,1) [right] {$\scriptstyle{y_{i-1}}$};  
 \node  at (6.3,1.5){\Large \color{mygreen} $\overset{\mathrm{RIII}}{\longleftrightarrow}$};
\end{tikzpicture}
\begin{tikzpicture}[xscale=0.8,>=latex]
 \draw [->, rounded corners=10, ultra thick] (-1,0) -- (5,3);
 \draw [line width=4pt,white, rounded corners=10] (0,0) -- (-1,0.5) -- (4,3); 
 \draw [->, rounded corners=10] (0,0) -- (-1,0.5) -- (4,3);
 \draw [line width=4pt,white, rounded corners=10] (1,0) -- (-1,1) -- (3,3); 
 \draw [->, rounded corners=10] (1,0) -- (-1,1) -- (3,3); 
 \draw [line width=4pt,white, rounded corners=10] (2,0) -- (5,1.5) -- (2,3); 
 \draw [->, rounded corners=10, ultra thick] (2,0) -- (5,1.5) -- (2,3);
 \draw [line width=4pt,white, rounded corners=10] (3,0) -- (-1,2) -- (1,3); 
 \draw [->, rounded corners=10] (3,0) -- (-1,2) -- (1,3); 
 \draw [line width=4pt,white, rounded corners=10] (4,0) -- (5,0.5) -- (0,3);   
 \draw [->, rounded corners=10, ultra thick] (4,0) -- (5,0.5) -- (0,3);  
 \node at (-1,2) [red] {$\scriptstyle{a}$};
 \node at (-1.2,1) [myblue] {$\scriptstyle{t(c)}$};  
 \node at (0.2,1) [red] {$\scriptstyle{c}$};  
 \node at (0.3,1.5) [red] {$\scriptstyle d$}; 
 \node at (1.5,1.6) [red] {$\scriptstyle b$};  
 \node at (0.6,2) [myblue] {$\scriptstyle t(b)$};
\end{tikzpicture}
   \caption{A proof of relation~\eqref{E:ChiProperty}. Here the thick lines stand for half-twisted bundles of strands. On the left, the $j$th strand is doubled (under the action of~$s_j$). The color identifications $a= \chi_i(s_j(\overline{y}))$, $b= \chi_j(d'_{i-1}(\overline{y}))$, $c= \chi_j(\overline{y})$, $d= \chi_{i-1}(\overline{y})$ are established using the expression~\eqref{E:chi'} for the maps~$\chi$. The colors $t(b)$ and $t(c)$ are obtained by a repeated application of Lemma~\ref{L:Kink}, starting from the colors $u=t(y_j)$ and $y_j$ on the left.}\label{P:ChiProperty}
\end{figure}	

The remaining semi-strong cubical relations follow from the properties
\begin{align*}
\sigma(\chi_i(s_j(\overline{y})), t(\chi_{j-1}(d'_{i}(\overline{y})))) &= (t(\chi_j(\overline{y})), \chi_{i}(\overline{y})), &i < j\\
\sigma(t(\chi_i(s_j(\overline{y}))), t(\chi_j(\overline{y}))) &= (t(\chi_{j+1}(s_i(\overline{y}))), t(\chi_{i}(\overline{y}))), &i \leqslant j\\
\chi_i(s_i(\overline{y})) = \chi_{i+1}(s_i(\overline{y})) &= t(\chi_i(\overline{y})), &\\
\chi_j(\overline{y}) &= \chi_j(d_{i-1}(\overline{y})), &i > j+1\\
\chi_j(\overline{y}) &= \chi_{j-1}(d_{i}(\overline{y})), &i < j.
\end{align*}
These are established by a similar graphical procedure: in the guitar map diagram, one pulls to the left the strings responsible for the degeneracies and/or boundaries involved, and determines the induced colors.

Alternatively, one could show the semi-strong skew cubical relations for the data $(d^{l,+}_i, d^{r,-}_i, s^{\Braided}_i)$ using the graphical calculus, based on the diagrammatic interpretations of these maps from Figures~\ref{P:BrHom} and~\ref{P:si_braided}. 

By linearization, one obtains a semi-strong skew cubical structure on $C_k(X,M,N;A)$, which we regard as $\Z$-modules. The desired decomposition and its compatibility with the differentials now follow from Theorem~\ref{PR:PreCub}. 
\end{proof}

Let us now explore the applications of Theorems~\ref{T:HomEquiv} and~\ref{T:DegAndNormHom} to two particular cases of braided sets. 

\begin{example}
A rack $(X,\,\op)$ can be seen as a LND braided set, with the braiding $\sigma_{\op}(a,b) =  (b,a \op b)$. The operation~$\cdot$ and~$\wdot$ become here
$a \cdot b = b$, $a \wdot b = b \wop a$, 
where the operation $b \mapsto b \wop a$ is defined as the inverse of $b \mapsto b \op a$. The maps~$\chi_i$ from~\eqref{E:chi} simplify as
\begin{align*}
\chi_i(y_1,\ldots,y_k) = (\cdots(y_i \wop y_{i+1}) \wop \cdots) \wop y_k,
\end{align*}
denote here by $y_i \wop y_{i+1} \cdots y_k$ for the sake of readability. 
For a right module~$M$ and a left module~$N$ over $(X,\,\op)$ (which are thus braided modules, cf. Example~\ref{EX:rack}), the pre-cubical structure from Theorem~\ref{T:HomEquiv} writes
            \begin{align*}
                d_i &\colon (m, y_1, \ldots, y_k, n) \mapsto (m, y_1 \wop y_i, \ldots, y_{i-1} \wop y_i, y_{i+1}, \ldots, y_k, y_i \cdot n),\\
                d'_i & \colon (m, y_1, \ldots, y_k, n) \mapsto (m \cdot (y_i \wop y_{i+1} \cdots y_k), y_1, \ldots, y_{i-1}, y_{i+1}, \ldots, y_k, n).
            \end{align*}
The inverse
\begin{align*}
J^{-1}(m,y_1,\ldots,y_{k-1},y_k,n) &= (m, y_1\wop y_{2} \cdots y_k,\ldots, y_{k-1} \wop y_{k},y_k,n).
\end{align*}
of the guitar map sends these boundary maps to            
            \begin{align*}
                d^{r,-}_i &\colon (m, x_1, \ldots, x_k, n) \mapsto (m, x_1, \ldots, x_{i-1}, x_{i+1}, \ldots, x_k, (x_i \op x_{i+1} \cdots x_k) \cdot n),\\
                d^{l,+}_i & \colon (m, x_1, \ldots, x_k, n) \mapsto (m \cdot x_i, x_1 \op x_i, \ldots, x_{i-1} \op x_i, x_{i+1}, \ldots, x_k, n).
            \end{align*}
For trivial coefficients, this result was established by Przytycki~\cite{Prz1}; see his work for the meaning of the corresponding isomorphisms of complexes in the self-distributive homology theory. 

Suppose now that $X$ acts on~$M$ by bijections (which is a standard assumption in rack theory). If our rack is a quandle, then the braiding~$\sigma_{\op}$ is R$\mathrm{I}$-compatible, with $t(a)=a$. Theorem~\ref{T:DegAndNormHom} then yields the degeneracies
            \begin{align*}
                s_i \colon (m,& y_1, \ldots, y_k, n) \mapsto\\
                & (m \cdot (y_i \wop y_{i+1} \ldots y_k)^{-1}, y_1, \ldots, y_{i-1}, y_i, y_i, y_{i+1}, \ldots,  y_k, n)
            \end{align*}
for $(d'_i,\, d_i)$. The decomposition~\ref{E:Splitting} then generalizes the splitting known in the case of trivial coefficients since the work of Litherland and Nelson~\cite{MR1952425}.
\end{example}

\begin{example}
A group $(X,\, \star,\, e)$ is also an R$\mathrm{I}$-compatible LND braided set, with the braiding $\sigma_{\star} (a,b) = (e, a \star b)$, and the constant map $t(a)=e$ (Example~\ref{EX:monoid}). For this structure, one calculates
\begin{align*}
a \cdot b &= e, & \chi_i(y_1,\ldots,y_k) &= e, \quad i \geqslant 2,\\
a \wdot b &= b \star a^{-1}, & \chi_1(y_1,\ldots,y_k) &=y_1 \star y_2^{-1}
\end{align*}
(we declare $y_2=e$ if $k=1$). Take also a right module~$M$ and a left module~$N$ over the group~$X$ (which are thus solid braided modules). Theorem~\ref{T:HomEquiv} then says that the pre-cubical structures
            		\begin{align*}
                d_i &\colon (m, y_1, \ldots, y_k, n) \mapsto (m, y_1 \star y_i^{-1}, \ldots, y_{i-1}\star y_i^{-1}, e, \ldots, e, y_i \cdot n),\\
                d'_i & \colon (m, y_1, \ldots, y_k, n) \mapsto (m, y_1, \ldots, y_{i-1}, y_{i+1}, \ldots, y_k, n), \; i \geqslant 2,\\
                d'_1 & \colon (m, y_1, \ldots, y_k, n) \mapsto ((m \cdot y_1)\cdot y_2^{-1}, y_{2}, \ldots, y_k, n);\\                
              \text{and }  d^{r,-}_i &\colon (m, x_1, \ldots, x_k, n) \mapsto (m, x_1, \ldots, x_{i-1}, e, \ldots, e, x_i \cdot \ldots \cdot (x_k \cdot n)),\\
                d^{l,+}_i & \colon (m, x_1, \ldots, x_k, n) \mapsto (m, x_1, \ldots, x_{i-2}, x_{i-1} \star x_i, x_{i+1}, \ldots, x_k, n), \; i \geqslant 2,\\
                d^{l,+}_1 & \colon (m, x_1, \ldots, x_k, n) \mapsto (m \cdot x_1, x_2 \ldots, x_k, n)
                \end{align*}
are connected by the isomorphisms
\begin{align*}
J(m,x_1,\ldots,x_{k-1},x_k,n) &=(m,x_1 \star \cdots \star x_k,\ldots, x_{k-1}\star x_k, x_k),\\
J^{-1}(m,y_1,\ldots,y_{k-1},y_k,n) &= (m, y_1\star y_2^{-1},\ldots, y_{k-1}\star y_k^{-1},y_k,n).
\end{align*}
One recognizes the two equivalent forms $\sum_i (-1)^{i-1}d'_i$ and $\sum_i (-1)^{i-1} d^{l,+}_i$ of the bar differential for groups. Przytycki~\cite{Prz1} noticed the resemblance between this equivalence of differentials and the corresponding phenomenon in the self-distributive situation. Our unified braided interpretation of the two homology theories offers a conceptual explanation of these parallels.

According to Theorem~\ref{T:DegAndNormHom}, the partial diagonal maps
            \begin{align*}
                s_i \colon (m,& y_1, \ldots, y_k, n) \mapsto  (m, y_1, \ldots, y_{i-1}, y_i, y_i, y_{i+1}, \ldots,  y_k, n)
            \end{align*}
are degeneracies for $(d'_i,\, d_i)$, and the sum of their images forms a direct summand of any of the complexes constructed in Theorem~\ref{PR:PreCub}.
\end{example}

\section{Cycle sets: cohomology and extensions}

In this section we specialize our (co)homology study above to cycle sets (Example~\ref{EX:CycleSet}), and apply it to an analysis of cycle set extensions. In particular, we interpret the latter in terms of certain $2$-cocycles.

Recall that a cycle set is a set~$X$ with a binary operation~$\cdot$ satisfying 
$(a\cdot b)\cdot (a\cdot c)=(b\cdot a)\cdot (b\cdot c)$ 
and having all the translations $a \mapsto b \cdot a$ bijective, with the inverses $a \mapsto  b \ast a$. A cycle set carries the involutive left non-degenerate braiding $\sigma_{\cdot}(a,b) = ( (b \ast a) \cdot b, b \ast a)$. The operations~$\cdot$ and~$\wdot$ (Notation~\ref{N:sideways}) for this braiding both coincide with the original operation~$\cdot$, making the sideways map (Figure~\ref{P:Side}) symmetric: it takes the form $(a,b) \mapsto (a \cdot b, b \cdot a)$. 

We now apply Theorem~\ref{T:HomEquiv} to the braided set $(X,\, \sigma_{\cdot})$, with trivial coefficients (Example~\ref{EX:Trivial}) on the left, and adjoint coefficients $(X, \, \cdot)$ (Remark~\ref{R:AdjActionBis}) on the right. 
More precisely, for our data we consider the chain complex obtained from the pre-cubical structure $(d_i, d'_i)$ via Theorem~\ref{PR:PreCub} with $\alpha = 1, \beta = -1$, and its cohomological counterpart:

\begin{defn}\label{D:CycleSetHom}
The \emph{cycles} / \emph{boundaries} / \emph{homology groups of a cycle set $(X,\, \cdot)$} with coefficients in an abelian group~$A$ are the cycles / boundaries / homology groups of the chain complex $C_n(X,A) = AX^{\times n}$, $n \geqslant 0$, with
\begin{multline*}
	\partial_{n}(x_1,\dots,x_{n})=\sum\nolimits_{i=1}^{n-1}(-1)^{i}( (x_1,\dots,\widehat{x_i},\dots,x_{n})\\
	-(x_i\cdot x_1,\dots,x_i\cdot x_{i-1},x_i\cdot x_{i+1},\dots,x_i\cdot x_{n})),
\end{multline*}
where $(x_1,\dots,\widehat{x_i},\dots,x_{n})=(x_1,\dots,x_{i-1},x_{i+1},\dots,x_n)$, and $\partial_1=0$. The \emph{cocycles} / \emph{coboundaries} / \emph{cohomology groups of $(X,\, \cdot)$} are defined by the differentials $\partial^n \colon f \mapsto f \circ \partial_{n+1}$ on $C^n(X,A) = \Fun(X^n,A)$. The constructed cycle / boundary / homology groups are denoted by $Z_n(X,A)$, $B_n(X,A)$, and $H_n(X,A)$ respectively, with the analogous notations $\ldots^n(X,A)$ in the co-case.
\end{defn}

Note the subscript shift in~$C_n$ with respect to previous sections.

\begin{example}
	For a trivial cycle set $(X,\, x\cdot y=y)$ all the differentials $\partial_n$ and $\partial^n$ vanish, hence one has $H_n(X,A) = AX^{\times n}$, $H^n(X,A) = \Fun(X^n,A)$.
\end{example}

\begin{example}\label{EX:H1}
	The first differentials read $\partial_1 = 0$ and $\partial_2(x,y)= y - x\cdot y$. Thus the homology group $H_1(X,A)$ is the $A$-module freely generated by the \emph{orbits} of our cycle set $(X,\, \cdot)$, i.e. by the classes of the equivalence relation on~$X$ generated by $x \sim y \cdot x$, $x,y \in X$. The cohomology group $H^1(X,A)$ is the group of those maps $X \to A$ which are constant on every orbit.
\end{example}

We now turn to a study of the $2$-cocycles of $(X,\, \cdot)$, i.e., maps $f\colon X\times X\to A$ such that for all $x,y,z\in X$ one has
\begin{align}\label{E:2Cocycle}
f(x,z)+f(x\cdot y,x\cdot z) &= f(y,z)+f(y\cdot x,y\cdot z).
\end{align}

\begin{example} 
	Let $f,g$ be two commuting endomorphisms of an abelian group~$X$ such that $g$ is 
	invertible and $f$ squares to zero. Then $X$ is a cycle set with 
	\[
	x\cdot y=-f(g(x))+g(y),\quad
	x,y\in X.
	\]
	For $h \in\End(X)$, the map $X\times X\to X$,
	$(x,y)\mapsto h(y-x)$, is a $2$-cocycle if and
	only if $h$ satisfies $h =h g$. 
\end{example}

\begin{example}\label{EX:DeltaCocycles}	
	Fix two distinct elements $\alpha_0 \neq \alpha_1$ in an abelian group~$A$.
	Then the map $f(x,y) = \begin{cases} \alpha_1 & \text{if } x=y, \\ \alpha_0
		& \text{if } x \neq y, \end{cases}$\, is a $2$-cocycle of the cycle set
		$(X,\, \cdot)$. Indeed, relations $x=z$ and $y\cdot x = y\cdot z$ are
		equivalent in~$X$ (since the left translations are invertible), which
		yields the desired property~\eqref{E:2Cocycle}.
\end{example}

In the remaining part of this section we will show that $2$-cocycles are
closely related to cycle set extensions. This was one of the motivations behind
our definition of cycle set cohomology.

\begin{lem}\label{L:ExtFromCocycle}
	Let $(X,\, \cdot)$ be a cycle set, $A$ an abelian group, and $f\colon X\times X\to A$ be
	a map. Then $A\times X$ with $(\alpha,x)\cdot (\beta,y)=(\beta+f(x,y),x\cdot y)$ for
	$\alpha,\beta\in A$ and $x,y\in X$ is a cycle set if and only if $f\in Z^2(X,A)$. 
\end{lem}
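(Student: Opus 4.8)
The plan is to check the two defining axioms of a cycle set for the proposed structure on $A \times X$ --- bijectivity of all left translations and the cycle identity~\eqref{E:Cyclic} --- and to observe that only the second axiom constrains~$f$, and does so precisely via~\eqref{E:2Cocycle}.

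First I would dispose of the bijectivity of left translations, which I expect to hold unconditionally. The translation by $(\beta, y)$ acts as $(\gamma, z) \mapsto (\gamma + f(y,z),\, y \cdot z)$. Its $X$-component $z \mapsto y \cdot z$ is a bijection since $(X,\,\cdot)$ is a cycle set, and for each fixed~$z$ the $A$-component $\gamma \mapsto \gamma + f(y,z)$ is a bijection of~$A$; together these give an explicit inverse $(\delta, w) \mapsto (\delta - f(y,\, y \ast w),\, y \ast w)$. So this axiom holds for every~$f$ and imposes no condition.

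The crux is the cycle identity. I would expand both sides for $(\alpha,x),(\beta,y),(\gamma,z) \in A \times X$, obtaining
\[
((\alpha,x)\cdot(\beta,y))\cdot((\alpha,x)\cdot(\gamma,z)) = \bigl(\gamma + f(x,z) + f(x\cdot y,\, x\cdot z),\ (x\cdot y)\cdot(x\cdot z)\bigr)
\]
and, symmetrically,
\[
((\beta,y)\cdot(\alpha,x))\cdot((\beta,y)\cdot(\gamma,z)) = \bigl(\gamma + f(y,z) + f(y\cdot x,\, y\cdot z),\ (y\cdot x)\cdot(y\cdot z)\bigr).
\]
The $X$-components coincide for every~$f$, exactly by the cycle identity~\eqref{E:Cyclic} in $(X,\,\cdot)$. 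Cancelling the common summand~$\gamma$, the $A$-components coincide for all $x,y,z$ if and only if $f(x,z) + f(x\cdot y, x\cdot z) = f(y,z) + f(y\cdot x, y\cdot z)$, which is verbatim the $2$-cocycle condition~\eqref{E:2Cocycle}. This gives the claimed equivalence.

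The computation is routine, so I do not anticipate a real obstacle; the only point requiring care is tracking the $A$-component through the two successive products, where the value $f(y,z)$ appended at the inner product reappears untouched after the outer one. The satisfying feature is that~\eqref{E:2Cocycle} emerges on the nose as the condition for the $A$-components to agree, which is precisely how Definition~\ref{D:CycleSetHom} was engineered.
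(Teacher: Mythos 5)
Your proposal is correct and takes essentially the same approach as the paper's proof: both observe that invertibility of left translations holds unconditionally (with the same explicit inverse $(\beta,y)\mapsto(\beta-f(x,x\ast y),\,x\ast y)$) and that the cycle identity for $A\times_f X$ splits into the $X$-component, automatic by~\eqref{E:Cyclic}, and the $A$-component, which is verbatim~\eqref{E:2Cocycle}.
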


\begin{notation}\label{N:Ext}
The cycle set from the lemma is denoted by $A\times_fX$.
\end{notation}

\begin{proof}
The left translation invertibility for $A\times X$ follows from the same property for~$X$. Indeed, one can define inverses as $(\alpha,x) \ast (\beta,y)=(\beta-f(x,x \ast y),x \ast y)$. Further, the cycle property
	\begin{equation*}
		((\alpha,x)\cdot (\beta,y))\cdot ((\alpha,x)\cdot (\gamma,z))
		=((\beta,y)\cdot (\alpha,x))\cdot ((\beta,y)\cdot (\gamma,z)),
	\end{equation*}
for $A\times X$ reads 
	\begin{equation*}
	\gamma+f(x,z)+f(x\cdot y,x\cdot z) = \gamma+f(y,z)+f(y\cdot x,y\cdot z),
	\end{equation*}
which is equivalent to~$f$ being a $2$-cocycle.	
\end{proof}

\begin{rem}\label{R:LNDHom}
One can mimic Definition~\ref{D:CycleSetHom} (with the preceding argument) for a general left non-degenerate braided set $(X,\, \sigma)$. In this situation, $2$-cocycles are defined by the property 
$$f(x,z)+f(x\cdot y,x\cdot z)=f(y,z)+f(y\wdot x,y\cdot z).$$
Changing the pre-cubical structure $(d_i, d'_i)$ to $(d^{\star}_i, d'_i)$ (Theorem~\ref{T:Birack}), one gets an alternative (co)homology theory, with the $2$-cocycles, called \emph{star $2$-cocycles} here, defined by  
$$f^{\star}(x,z)+f^{\star}(x\wdot y,x\wdot z)=f^{\star}(y,z)+f^{\star}(y\cdot x,y\wdot z).$$
Further, observe that a left non-degenerate map $(a,b) \mapsto (\prescript{a}{}{b},a^b)$ satisfies the Yang--Baxter equation if and only if the associated maps $\cdot, \wdot$ (Notation~\ref{N:sideways}) obey the following three properties:
\begin{align*}
(a\wdot b)\cdot (a\cdot c) &=(b\cdot a)\cdot (b\cdot c),\\
(a\cdot b)\wdot (a\wdot c) &=(b\wdot a)\wdot (b\wdot c),\\
(a\wdot b)\cdot (a\wdot c) &=(b\cdot a)\wdot (b\cdot c)
\end{align*}
(this is classical for biracks, and the proof extends directly to general left non-degenerate braided sets). 
Now, developing the argument from the lemma above, one shows that the formulas 
\begin{align*}
(\alpha,x)\cdot (\beta,y) &=(\beta+f(x,y),x\cdot y), & (\alpha,x)\wdot (\beta,y) &=(\beta+f^{\star}(x,y),x\wdot y)
\end{align*}
 are associated to a left non-degenerate braiding on $A \times X$ if and only $f$ is a $2$-cocycle, $f^{\star}$ is a star $2$-cocycle, and the two are compatible in the sense of
$$f(x,z)+f^{\star}(x\cdot y,x\cdot z)=f^{\star}(y,z)+f(y\wdot x,y\wdot z).$$
\end{rem}

Inspired by the theory of abelian extensions of quandles \cite{MR2008876,MR1954330}, we define extensions of cycle sets by abelian groups, of which the structure from Lemma~\ref{L:ExtFromCocycle} will be a fundamental example.

\begin{defn}\label{D:Ext}
An \emph{(abelian) extension} of a cycle set $(X,\, \cdot)$ by an abelian
group~$A$ is the data $\extension{Y}{p}{X}{A}$, where $(Y,\, \cdot)$ is a cycle set endowed with a left $A$-action (denoted by $(\alpha,y) \mapsto \alpha y$), and $p\colon Y\to X$ is a surjective cycle set homomorphism, such that the following hold:
\begin{enumerate}
	\item $A$ acts regularly on each fiber $p^{-1}(x)$ (i.e., for all $y,z$ from the same fiber there is a unique $\alpha \in A$ such that $\alpha z=y$), and
	\item for all $\alpha \in A$ and $y,z\in Y$, one has $(\alpha y)\cdot z=y\cdot z$ and $y\cdot (\alpha z)= \alpha (y\cdot z)$. 
\end{enumerate}
\end{defn}

\begin{example}\label{E:central}
	Let $A$ be an abelian group, $(X,\, \cdot)$ a cycle set, and $f$ a cocycle from $Z^2(X,A)$. Form the cycle set $A\times_fX$ (Lemma~\ref{L:ExtFromCocycle}), and consider the canonical surjection $p_X\colon A\times_fX\to X$, $(\alpha,x)\mapsto x$. Let $A$ act on $A\times_fX$ by $\alpha(\beta,x)=(\alpha+\beta,x)$. One readily sees that $(p_X\colon A\times_fX\to X,\, A)$ is an extension of $X$ by $A$. 
\end{example}

\begin{defn}\label{D:ExtEquiv}
Extensions $\extension{Y}{p}{X}{A}$ and $\extension{Y'}{p'}{X}{A}$ are called \emph{equivalent} if there
exists a cycle set isomorphism $F\colon Y \overset{\sim}{\to} Y'$ satisfying $p=p'\circ F$ and $F(\alpha y)=\alpha F(y)$ for all $\alpha\in A$, $y\in Y$. 
\end{defn}

\begin{lem}\label{L:section->cocycle}
	Let $(X,\, \cdot)$ be a cycle set and $\extension{Y}{p}{X}{A}$ be its extension.
	Every set-theoretic section $s\colon X\to Y$ induces a $2$-cocycle $f\in Z^2(X,A)$ such that 
	\begin{align}\label{E:SectionCocycle}
	f(x_1,x_2)s(x_1\cdot x_2)&=s(x_1)\cdot s(x_2)
	\end{align}
	for all $x_1,x_2\in X$.  Furthermore, if $s'\colon X\to Y$ is another section
	and $f'\in Z^2(X,A)$ is its associated $2$-cocycle, then $f$ and $f'$ are
	cohomologous. 
\end{lem}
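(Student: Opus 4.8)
The plan is to define $f\colon X\times X\to A$ via equation~\eqref{E:SectionCocycle} and check it is well-defined and lands in $Z^2(X,A)$. Given a section $s$, for each pair $(x_1,x_2)$ the elements $s(x_1)\cdot s(x_2)$ and $s(x_1\cdot x_2)$ both lie in the fiber $p^{-1}(x_1\cdot x_2)$: indeed $p(s(x_1)\cdot s(x_2)) = p(s(x_1))\cdot p(s(x_2)) = x_1\cdot x_2$ since $p$ is a cycle set homomorphism, and $p(s(x_1\cdot x_2)) = x_1\cdot x_2$ since $s$ is a section. By the regularity of the $A$-action on fibers (condition~(1) of Definition~\ref{D:Ext}), there is a unique $f(x_1,x_2)\in A$ with $f(x_1,x_2)s(x_1\cdot x_2) = s(x_1)\cdot s(x_2)$. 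This defines the map $f$; uniqueness gives well-definedness.

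\textbf{Verifying the cocycle condition.}
Next I would verify that $f$ satisfies~\eqref{E:2Cocycle}. The strategy is to expand the cycle set identity $(s(x)\cdot s(y))\cdot(s(x)\cdot s(z)) = (s(y)\cdot s(x))\cdot(s(y)\cdot s(z))$ in $Y$ using the defining relation for $f$ repeatedly, and then to read off the $A$-components. The key computational tool is condition~(2) of Definition~\ref{D:Ext}: the relations $(\alpha y)\cdot z = y\cdot z$ and $y\cdot(\alpha z) = \alpha(y\cdot z)$ let one pull $A$-coefficients through the cycle set operation, discarding those in the left slot and collecting those in the right slot additively (recall $A$ is abelian and written additively). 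Substituting $s(x)\cdot s(y) = f(x,y)s(x\cdot y)$ and likewise for the other three products, then applying condition~(2), the left-hand side becomes $\bigl(f(x,z)+f(x\cdot y,x\cdot z)\bigr)s\bigl((x\cdot y)\cdot(x\cdot z)\bigr)$ and the right-hand side becomes $\bigl(f(y,z)+f(y\cdot x,y\cdot z)\bigr)s\bigl((y\cdot x)\cdot(y\cdot z)\bigr)$. The two base points agree by the cycle set law in $X$, so regularity of the $A$-action forces equality of the $A$-coefficients, which is exactly~\eqref{E:2Cocycle}.

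\textbf{Independence of the section up to coboundary.}
Finally, given a second section $s'$ with associated cocycle $f'$, I would produce a $1$-cochain $\lambda\in C^1(X,A) = \Fun(X,A)$ witnessing that $f-f'$ is a coboundary. Since $s(x)$ and $s'(x)$ lie in the same fiber $p^{-1}(x)$, regularity gives a unique $\lambda(x)\in A$ with $s(x) = \lambda(x)s'(x)$. Then I would compute $s(x)\cdot s(y) = \bigl(\lambda(x)s'(x)\bigr)\cdot\bigl(\lambda(y)s'(y)\bigr)$; using condition~(2) to drop $\lambda(x)$ from the left slot and extract $\lambda(y)$ from the right, this equals $\lambda(y)\bigl(s'(x)\cdot s'(y)\bigr) = \bigl(\lambda(y)+f'(x,y)\bigr)s'(x\cdot y) = \bigl(\lambda(y)+f'(x,y)-\lambda(x\cdot y)\bigr)s(x\cdot y)$. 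Comparing with $f(x,y)s(x\cdot y)$ yields $f(x,y) = f'(x,y) + \lambda(y) - \lambda(x\cdot y)$, so $f-f'$ equals the coboundary $\partial^1\lambda$ up to the sign conventions of Definition~\ref{D:CycleSetHom} (where $\partial_2(x,y) = y - x\cdot y$). Hence $f$ and $f'$ are cohomologous.

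\textbf{Main obstacle.}
The routine but error-prone part is the bookkeeping in the cocycle verification: one must track which $A$-coefficients survive (those appearing in right slots) and which vanish (those in left slots) through nested applications of the cycle set operation, and confirm the two composite base points in $X$ genuinely coincide via the cycle law. The conceptual heart of the argument, however, is simply the regularity of the fiber action, which converts every equality in $Y$ into an equality of $A$-coefficients; once that reflex is in place, all three claims follow by direct computation.
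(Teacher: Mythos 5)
Your proposal is correct and follows essentially the same route as the paper's proof: define $f$ by regularity of the fiber action, verify the cocycle identity by expanding the cycle law in $Y$ using condition~(2) to drop left-slot and collect right-slot $A$-coefficients, and obtain the coboundary relation from the fiberwise difference function between two sections (your $\lambda$ is the negative of the paper's $\gamma$, an immaterial sign convention). The only cosmetic difference is that you expand both sides of the cycle identity downward, whereas the paper unwinds the coefficient expression upward; the computations are identical.
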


\begin{proof}
	Take a section $s\colon X\to Y$ to $p\colon Y\to X$
	and two elements $x_1,x_2\in X$. Since $p$ is a cycle set
	homomorphism, both $s(x_1\cdot x_2)$ and $s(x_1)\cdot s(x_2)$ belong to
	$p^{-1}(x_1\cdot x_2)$. By the regularity of the $A$-action on fibers, there exists a unique $f(x_1\cdot x_2)\in A$ verifying~\eqref{E:SectionCocycle}. 
	This defines a map $f\colon X\times X\to A$, which we claim to be a cocycle. Indeed, for $x_1,x_2,x_3\in X$ one calculates
	\begin{align*}
		(f(x_1,x_3)&+f(x_1\cdot x_2,x_1\cdot x_3))\, s( (x_1\cdot x_2)\cdot (x_1\cdot x_3) )\\
		&=f(x_1,x_3)\left( s(x_1\cdot x_2)\cdot s(x_1\cdot x_3) \right)\\
		&=s(x_1\cdot x_2)\cdot \left( f(x_1,x_3)s(x_1\cdot x_3) \right)\\
		&=s(x_1\cdot x_2)\cdot \left( s(x_1)\cdot s(x_3) \right)\\
		&=\left(- f(x_1,x_2)( s(x_1)\cdot s(x_2) \right)\cdot \left( s(x_1)\cdot s(x_3) \right)\\
		&=\left( s(x_1)\cdot s(x_2) \right)\cdot \left( s(x_1)\cdot s(x_3) \right).
	\end{align*}
		Permuting the arguments, one obtains
	\begin{align*}
		(f(x_2,x_3)&+f(x_2 \cdot x_1,x_2\cdot x_3)\, s( (x_2\cdot x_1)\cdot (x_2\cdot x_3) )\\				&=\left( s(x_2)\cdot s(x_1) \right)\cdot \left( s(x_2)\cdot s(x_3) \right).
	\end{align*}
	Now, the cycle property for~$X$ and~$Y$ and the regularity of the $A$-action on fibers imply  $f(x_1,x_3)+f(x_1\cdot x_2,x_1\cdot x_3) = f(x_2,x_3)+f(x_2 \cdot x_1,x_2\cdot x_3)$ as desired.

	Suppose now that $s'$ is another section, and take an $x\in X$. Since $s(x)$ and $s'(x)$ both belong to the fiber $p^{-1}(x)$, there exists a unique $\gamma(x)\in A$ such that
	$s'(x)=\gamma(x) s(x)$. Let us prove that $f(x_1,x_2)-f'(x_1,x_2)=\gamma(x_1\cdot
	x_2)-\gamma(x_2)$ for all $x_1,x_2\in X$, which means that $f$ and $f'$ are
	cohomologous. One has:
	\begin{align*}
		\gamma({x_2})(s(x_1)\cdot s(x_2))
		&=s(x_1)\cdot (\gamma({x_2})s(x_2))\\
		&=(\gamma({x_1})s(x_1))\cdot (\gamma({x_2})s(x_2))\\
		&=s'(x_1)\cdot s'(x_2)\\
		&=f'(x_1,x_2)s'(x_1\cdot x_2)\\
		&=f'(x_1,x_2)(\gamma(x_1\cdot x_2)s(x_1\cdot x_2))\\
		&=(f'(x_1,x_2)+\gamma(x_1\cdot x_2))s(x_1\cdot x_2)\\
		&=(f'(x_1,x_2)+\gamma(x_1\cdot x_2))(-f(x_1,x_2)(s(x_1)\cdot s(x_2)))\\
		&=(f'(x_1,x_2)+\gamma(x_1\cdot x_2)-f(x_1,x_2))(s(x_1)\cdot s(x_2)).
	\end{align*}
	As usual, the regularity of the $A$-action on fibers  allows one to conclude.
\end{proof}

\begin{lem}\label{lem:extension=AxX}
	Let $(X,\, \cdot)$ be a cycle set and $A$ an abelian group. Every extension
	$\extension{Y}{p}{X}{A}$ of $X$ by $A$ is equivalent to an extension
	$\extension{A\times_fX}{p_X}{X}{A}$ for some $f\in Z^2(X,A)$.
\end{lem}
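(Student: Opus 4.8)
The goal is to show every extension $\extension{Y}{p}{X}{A}$ is equivalent to one of the canonical form $\extension{A\times_fX}{p_X}{X}{A}$. The natural strategy is to use the two preceding lemmas as building blocks: Lemma~\ref{L:section->cocycle} manufactures a $2$-cocycle $f$ from a chosen set-theoretic section, and Example~\ref{E:central} packages that cocycle into the standard extension $A\times_fX$. What remains is to exhibit an explicit equivalence $F\colon Y \overset{\sim}{\to} A\times_fX$ and verify it satisfies the three conditions from Definition~\ref{D:ExtEquiv}: that it is a cycle set isomorphism, that it commutes with the projections ($p = p_X \circ F$), and that it is $A$-equivariant ($F(\alpha y) = \alpha F(y)$).

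**Constructing the equivalence.** First I would fix an arbitrary set-theoretic section $s\colon X \to Y$ of $p$ and let $f \in Z^2(X,A)$ be the cocycle it induces via~\eqref{E:SectionCocycle}. Then I define $F\colon Y \to A\times_f X$ as follows. Given $y \in Y$, its image $p(y) = x$ lies in $X$, so $y$ and $s(x)$ are in the same fiber $p^{-1}(x)$; by the regularity of the $A$-action there is a unique $\alpha \in A$ with $y = \alpha\, s(x)$. I set $F(y) = (\alpha, x)$. This map is well-defined by uniqueness of $\alpha$, and it is bijective, with inverse $(\alpha,x) \mapsto \alpha\, s(x)$. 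The relation $p = p_X \circ F$ is immediate from the construction since the second coordinate of $F(y)$ is $p(y)$. For $A$-equivariance, I would observe that if $y = \alpha s(x)$ then $\beta y = (\beta+\alpha)s(x)$, so $F(\beta y) = (\beta+\alpha, x) = \beta(\alpha,x) = \beta F(y)$, using the $A$-action on $A\times_f X$ from Example~\ref{E:central}.

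**The main verification.** The substantive step is checking that $F$ is a cycle set homomorphism, i.e. $F(y \cdot z) = F(y) \cdot F(z)$. Writing $y = \alpha s(x_1)$ and $z = \beta s(x_2)$ with $x_1 = p(y)$, $x_2 = p(z)$, I compute $y \cdot z$ using property~(2) of Definition~\ref{D:Ext}: the left factor's $A$-component drops out, so $(\alpha s(x_1))\cdot(\beta s(x_2)) = s(x_1)\cdot(\beta s(x_2)) = \beta\bigl(s(x_1)\cdot s(x_2)\bigr)$. Then~\eqref{E:SectionCocycle} gives $s(x_1)\cdot s(x_2) = f(x_1,x_2)\, s(x_1\cdot x_2)$, so altogether $y\cdot z = \bigl(\beta + f(x_1,x_2)\bigr) s(x_1\cdot x_2)$. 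Since $p(y\cdot z) = x_1\cdot x_2$, this means $F(y\cdot z) = (\beta + f(x_1,x_2),\, x_1\cdot x_2)$. On the other side, the multiplication in $A\times_f X$ from Lemma~\ref{L:ExtFromCocycle} gives $F(y)\cdot F(z) = (\alpha,x_1)\cdot(\beta,x_2) = (\beta + f(x_1,x_2),\, x_1\cdot x_2)$, which matches. The principal obstacle, such as it is, lies in this computation: one must carefully deploy the two defining properties of an extension (that the $A$-action on the left argument is absorbed, and that it is equivariant in the right argument) in the correct order, and then invoke the cocycle defining relation—so the verification hinges entirely on correctly interacting property~(2) of Definition~\ref{D:Ext} with~\eqref{E:SectionCocycle}. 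Everything else is bookkeeping.
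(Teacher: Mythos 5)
Your proof is correct and follows essentially the same route as the paper: fix a section $s$, extract the cocycle $f$ via Lemma~\ref{L:section->cocycle}, and verify the three conditions of Definition~\ref{D:ExtEquiv} using property~(2) of Definition~\ref{D:Ext} together with~\eqref{E:SectionCocycle}. The only cosmetic difference is that you build the equivalence $Y \to A\times_f X$ while the paper constructs its inverse $(\alpha,x)\mapsto \alpha s(x)\colon A\times_fX \to Y$; the computations are the same ones read in opposite directions.
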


\begin{proof}
	Let $s\colon X\to Y$ be any set-theoretic section to $p\colon Y\to X$. By Lemma~\ref{L:section->cocycle}, it induces a cocycle $f\in Z^2(X,A)$.  Consider the map 
	\[
	F\colon A\times_fX\to Y,
	\quad (\alpha,x)\mapsto \alpha s(x).
	\]
	It is a homomorphism of cycle sets, since one has
	\begin{align*}
		F((\alpha,x)(\beta,y))&=F((\beta+f(x,y),x\cdot y))\\
		&=(\beta+f(x,y))s(x\cdot y)=\beta((s(x)\cdot s(y))\\
		&=(\alpha s(x))\cdot (\beta s(y)) = F(\alpha,x)\cdot F(\beta,y).
	\end{align*}

	Let us prove that $F$ is bijective. It is injective since relation
	 $\alpha s(x)=\beta s(y)$ implies $x=p(s(x))=p(\alpha s(x))=p(\beta s(y))=p(s(y))=y$, and
	 $\alpha=\beta$ follows by the regularity of the $A$-action. 
	 It is surjective since for every $y\in Y$ there exists
	a unique $\alpha \in A$ such that $\alpha sp(y)=y$, implying
	$F(\alpha,p(y))=y$. 

	It remains to prove that $F$ is a map of extensions. One has 
	\[
		(p\circ F)(\alpha,x)=p(\alpha s(x))=ps(x)=x=p_X(\alpha,x),
	\]
	\[
		\alpha F(\beta,x)=\alpha(\beta s(x))=(\alpha+\beta)s(x)=F(\alpha+\beta,x). \qedhere
	\]
\end{proof}

\begin{lem}\label{L:AxfX=AxgX}
	Let $A$ be an abelian group, $(X,\, \cdot)$ a cycle set, and $f,g$ cocycles in $Z^2(X,A)$.  The
	extensions $\extension{A\times_fX}{p_X}{X}{A}$ and
	$\extension{A\times_gX}{p_X}{X}{A}$ are equivalent if and only if $f$ and $g$
	are cohomologous.
\end{lem}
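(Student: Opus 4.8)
The plan is to give an explicit description of all equivalences between the standard extensions $A\times_fX$ and $A\times_gX$, and then to match them against the coboundary relation. Recall from the proof of Lemma~\ref{L:section->cocycle} that $f,g\in Z^2(X,A)$ are cohomologous precisely when there is a map $\gamma\colon X\to A$ with $f(x,y)-g(x,y)=\gamma(x\cdot y)-\gamma(y)$ for all $x,y\in X$; this is the explicit form of a $2$-coboundary. So I will show that an equivalence of the two extensions exists if and only if such a $\gamma$ exists.

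First I would pin down the shape of an arbitrary equivalence. Suppose $F\colon A\times_gX\to A\times_fX$ is an equivalence of extensions. The condition $p_X=p_X\circ F$ forces $F$ to preserve the $X$-coordinate, so $F(\beta,x)=(\psi(\beta,x),x)$ for some $\psi\colon A\times X\to A$. The $A$-equivariance $F(\alpha(\beta,x))=\alpha F(\beta,x)$ reads $\psi(\alpha+\beta,x)=\alpha+\psi(\beta,x)$; setting $\beta=0$ and writing $\gamma(x):=\psi(0,x)$ yields $F(\beta,x)=(\beta+\gamma(x),x)$. Conversely, for any $\gamma\colon X\to A$ this formula defines a bijection (with inverse $(\beta,x)\mapsto(\beta-\gamma(x),x)$) which automatically satisfies $p_X\circ F=p_X$ and is $A$-equivariant. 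Hence the equivalences $A\times_gX\to A\times_fX$ are exactly the maps of this affine form that are, in addition, cycle set homomorphisms.

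The remaining step is to compute when such an $F$ respects the cycle set operations. A direct calculation gives
\begin{align*}
F\big((\alpha,x)\cdot(\beta,y)\big)&=\big(\beta+g(x,y)+\gamma(x\cdot y),\,x\cdot y\big),\\
F(\alpha,x)\cdot F(\beta,y)&=\big(\beta+\gamma(y)+f(x,y),\,x\cdot y\big),
\end{align*}
so $F$ is a homomorphism if and only if $f(x,y)-g(x,y)=\gamma(x\cdot y)-\gamma(y)$ for all $x,y$. Combining the two steps: an equivalence exists iff some $\gamma$ satisfies this identity, which is exactly the condition that $f$ and $g$ be cohomologous.

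I expect no serious obstacle here; the argument is essentially bookkeeping. The only points requiring care are the sign/orientation convention for the coboundary (which I align with the one already derived in Lemma~\ref{L:section->cocycle}, so that no mismatch arises), and the verification that the constraints ``over $X$'' and ``$A$-equivariant'' genuinely force the affine normal form for $F$, leaving a single homomorphism condition to analyze. As an alternative route for the forward implication one could transport the canonical section instead: given an equivalence $F\colon A\times_fX\to A\times_gX$, the section $x\mapsto F(0,x)$ of $A\times_gX$ induces, via Lemma~\ref{L:section->cocycle}, the cocycle $f$, while the canonical section of $A\times_gX$ induces $g$; since both are sections of the same extension, Lemma~\ref{L:section->cocycle} shows that $f$ and $g$ are cohomologous.
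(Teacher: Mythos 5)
Your proposal is correct and follows essentially the same route as the paper's proof: both derive the affine normal form $F(\beta,x)=(\beta+\gamma(x),x)$ from the conditions $p_X\circ F=p_X$ and $A$-equivariance, and both reduce the homomorphism property to the identity $f(x,y)-g(x,y)=\gamma(x\cdot y)-\gamma(y)$ via exactly the two displayed computations (these are the paper's equations~\eqref{E:F1}--\eqref{E:F2}, with the roles of $f$ and $g$ swapped because your $F$ runs in the opposite direction). The only cosmetic difference is that you characterize all candidate equivalences at once and then impose the homomorphism condition, while the paper treats the two implications separately; your closing alternative via Lemma~\ref{L:section->cocycle} is also valid but not needed.
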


\begin{proof}
	Suppose that $F$ is an equivalence between $\extension{A\times_fX}{}{X}{A}$ and 
	$\extension{A\times_gX}{}{X}{A}$,
	i.e. $F\colon A\times_f X \to A\times_g X$ is an
	isomorphism of cycle sets such that $p_X\circ F=p_X$ 
	and $\alpha F(\beta,x)=F(\alpha+\beta,x)$ for all $\alpha,\beta\in A$ and $x\in X$.
	Let $\gamma\colon X\to A$ be defined by $x\mapsto p_A(F(0,x))$, where
	$p_A\colon A\times X\to A$, $(\alpha,x)\mapsto \alpha$, is the canonical surjection.  Then one has
	\[
		F(\alpha,x)=F(\alpha(0,x))=\alpha F(0,x)=\alpha(\gamma(x),x)=(\alpha+\gamma(x),x).
	\]
	This implies
	\begin{align}
			F((\alpha,x)\cdot(\beta,y)) &=(\beta+f(x,y)+\gamma(x\cdot y),x\cdot y),\label{E:F1}\\
			F(\alpha,x)\cdot F(\beta,y) &=(\beta+\gamma(y)+g(x,y),x\cdot y).\label{E:F2}
	\end{align}
 	Since $F$ is a cycle set morphism, one obtains 
	 $g(x,y)-f(x,y)=\gamma(x\cdot y)-\gamma(y)$ for all 
	 $x,y\in X$, thus $f$ and $g$ are cohomologous. 

	Conversely, if $f$ and $g$ are cohomologous, there exists $\gamma\colon
	X\to A$ such that $g(x,y)-f(x,y)=\gamma(x\cdot y)-\gamma(y)$ for all $x,y\in
	X$. Consider the map 
	\[
		F\colon A\times_fX\to A\times_g X,\quad
		(\alpha,x)\mapsto (\alpha+\gamma(x),x).
	\]
	Computations \eqref{E:F1}-\eqref{E:F2} remain valid and show that $F$ is a cycle set morphism. It is bijective with the inverse $F^{-1}(\alpha,x)=(\alpha-\gamma(x),x)$, and clearly satisfies $p_X\circ F=p_X$ and
	$\alpha F(\beta,x)=F(\alpha+\beta,x)$ for all $\alpha,\beta\in A$, $x\in X$. 
\end{proof}

Put together, the preceding lemmas yield:

\begin{thm}\label{T:Ext=H2}
	Let $(X,\, \cdot)$ be a cycle set and $A$ an abelian group. The
	construction from Lemma~\ref{L:section->cocycle} yields a bijective
	correspondence between the set $\Ext(X,A)$ of equivalence classes of
	extensions of $X$ by $A$, and the cohomology group $H^2(X,A)$. 
\end{thm}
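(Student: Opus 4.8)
The plan is to assemble the preceding four lemmas into the desired bijection, so that the only genuinely new verification is that the cocycle-assignment descends to \emph{equivalence classes} of extensions. First I would define the candidate map $\Phi\colon\Ext(X,A)\to H^2(X,A)$ by sending (the class of) an extension $\extension{Y}{p}{X}{A}$ to the cohomology class $[f]$, where $f\in Z^2(X,A)$ is the $2$-cocycle induced by an arbitrary set-theoretic section $s\colon X\to Y$ via~\eqref{E:SectionCocycle}. The second part of Lemma~\ref{L:section->cocycle} guarantees that $[f]$ does not depend on the choice of~$s$, so $\Phi$ is well defined on a single fixed extension.

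Next I would check that $\Phi$ respects the equivalence relation from Definition~\ref{D:ExtEquiv}. Suppose $F\colon Y\overset{\sim}{\to}Y'$ is an equivalence between $\extension{Y}{p}{X}{A}$ and $\extension{Y'}{p'}{X}{A}$. Given a section $s$ of $p$, the composite $F\circ s$ is a section of $p'$, since $p'\circ F\circ s=p\circ s=\Id_X$. Because $F$ is a cycle set homomorphism commuting with the $A$-action, the cocycle $f'$ induced by $F\circ s$ satisfies
\begin{align*}
f'(x_1,x_2)\,(F s)(x_1\cdot x_2) &= (Fs)(x_1)\cdot (Fs)(x_2) = F\bigl(s(x_1)\cdot s(x_2)\bigr)\\
&= F\bigl(f(x_1,x_2)\,s(x_1\cdot x_2)\bigr) = f(x_1,x_2)\,(Fs)(x_1\cdot x_2),
\end{align*}
whence $f'=f$ by the regularity of the $A$-action on fibers. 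Thus equivalent extensions yield \emph{literally the same} cocycle, and $\Phi$ descends to a well-defined map on $\Ext(X,A)$.

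For surjectivity I would use Example~\ref{E:central}: given $[f]\in H^2(X,A)$, the extension $\extension{A\times_fX}{p_X}{X}{A}$ admits the canonical section $x\mapsto(0,x)$, and the defining relation $(0,x)\cdot(0,y)=(f(x,y),x\cdot y)=f(x,y)(0,x\cdot y)$ shows, via~\eqref{E:SectionCocycle}, that this section induces $f$ itself; hence $\Phi$ sends the class of $A\times_fX$ to $[f]$. For injectivity, suppose $\Phi$ agrees on two extensions $E,E'$, with induced cocycles $f,f'$, so $[f]=[f']$. By Lemma~\ref{lem:extension=AxX}, $E$ is equivalent to $A\times_fX$ and $E'$ to $A\times_{f'}X$ (and by the well-definedness just proved these representative cocycles are exactly the ones computing $\Phi(E)$, $\Phi(E')$). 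Since $f$ and $f'$ are cohomologous, Lemma~\ref{L:AxfX=AxgX} gives an equivalence $A\times_fX\simeq A\times_{f'}X$, and transitivity of the equivalence relation yields $E\simeq E'$.

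The main obstacle is not any single lemma but the bookkeeping needed to see that the cocycle attached to an abstract extension through an arbitrary section coincides, up to coboundary, with the cocycle of the concrete model $A\times_fX$ to which it is equivalent; the computation displayed above is precisely what resolves this, showing that $\Phi$ and the identification of Lemma~\ref{lem:extension=AxX} are mutually compatible. Once this compatibility is in place, the four lemmas combine formally to prove that $\Phi$ is a bijection.
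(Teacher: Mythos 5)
Your proposal is correct and takes essentially the same approach as the paper: the paper's entire proof of Theorem~\ref{T:Ext=H2} is the single line ``Put together, the preceding lemmas yield'' the result, and your write-up is exactly that assembly carried out in detail (well-definedness on equivalence classes via Lemma~\ref{L:section->cocycle} plus your direct computation with $F\circ s$, surjectivity via Example~\ref{E:central}, injectivity via Lemmas~\ref{lem:extension=AxX} and~\ref{L:AxfX=AxgX}). Your explicit check that an equivalence of extensions transports a section to a section inducing the very same cocycle is a welcome detail the paper leaves implicit.
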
 

\begin{rem}
	The extension procedure allows the construction of new cycle sets, and thus new left non-degenerate involutive braided sets, out of simpler ones. Another enhancement procedure for braidings is their algebraic deformation, in the spirit of Gerstenhaber. It was extensively studied by Eisermann \cite{Eisermann,Eisermann2}. Except for the diagonal case, a deformation transforms a set-theoretic solution to the YBE into an intrinsically linear one, and thus forces one outside the realm of cycle sets. For instance, deformations of the flip $(a,b) \mapsto (b,a)$ include all the braidings coming from quantum groups. The interaction of these two enhancements reserves many open questions:
\begin{enumerate}
	\item How can one relate the deformation theories of the braidings corresponding to a cycle set and its extension?
	\item Do cycle set extensions form a class of deformations of the corresponding braiding?
	\item Can the cycle set cohomology, responsible for extensions, be recovered inside Eisermann's Yang--Baxter cohomology, which controls deformations?
\end{enumerate}	
The last two phenomena do hold for the braidings associated to racks \cite{Eisermann,Eisermann2}.
\end{rem}

We conclude this section with an estimation of the \emph{Betti numbers} $\beta_n(X)$ of a cycle set $(X,\, \cdot)$---that is, the ranks of the free part of its integral homology groups $H_n(X, \Z)$. Recall the notion of \emph{orbits} of $(X,\, \cdot)$ from Example~\ref{EX:H1}.

\begin{pro}\label{PR:betti}
    Let $(X,\, \cdot)$ be a finite cycle set with $m$ orbits. Then the inequality
    $\beta_n(X)\geqslant m^n$ holds for all $n \geqslant 0$. 
\end{pro}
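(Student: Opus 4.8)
The plan is to compare the homology of $(X,\cdot)$ with that of the trivial cycle set on its set of orbits, using the orbit projection together with an averaging section at the chain level. Write $\mathcal{O}$ for the set of $m$ orbits of $(X,\cdot)$ (Example~\ref{EX:H1}) and $\pi\colon X\to\mathcal{O}$ for the quotient map. Since the orbit relation is generated by $x\sim y\cdot x$, one has $y\cdot x\sim x$, so $\pi(y\cdot x)=\pi(x)$ for all $x,y$; equipping $\mathcal{O}$ with the trivial operation $o_1\cdot o_2=o_2$ thus turns $\pi$ into a cycle set homomorphism. Moreover, for each $a\in X$ the left translation $L_a\colon x\mapsto a\cdot x$ is a bijection of $X$ (by the defining property of cycle sets) that, by the same computation $a\cdot x\sim x$, preserves each orbit; as $X$ is finite, $L_a$ restricts to a permutation of every orbit $o\in\mathcal{O}$. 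This last fact is what will make the section below a chain map.

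Next I would pass to rational coefficients and build an explicit section of $\pi_\#$. On a trivial cycle set all differentials vanish (as noted after Definition~\ref{D:CycleSetHom}), so $H_n(\mathcal{O},\Q)=\Q\,\mathcal{O}^{\times n}$ has dimension $m^n$. Define $s\colon C_n(\mathcal{O},\Q)\to C_n(X,\Q)$ on generators by the normalized averaging
\[
s(o_1,\dots,o_n)=\frac{1}{|o_1|\cdots|o_n|}\sum_{a_1\in o_1,\dots,a_n\in o_n}(a_1,\dots,a_n).
\]
I then claim that $s$ is a chain map, i.e.\ each $s(o_1,\dots,o_n)$ is a cycle. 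Applying $\partial_n$ (Definition~\ref{D:CycleSetHom}) to the unnormalized sum $S=\sum_{\overline{a}}(a_1,\dots,a_n)$ and grouping by the index $i$, the deletion term contributes $|o_i|\sum_{j\neq i,\,a_j\in o_j}(a_1,\dots,\widehat{a_i},\dots,a_n)$, while in the action term $\sum_{\overline{a}}(a_i\cdot a_1,\dots,a_i\cdot a_n)$ (with the $i$-th slot removed) one fixes $a_i$ and reindexes $b_j=a_i\cdot a_j$; because $L_{a_i}$ permutes each $o_j$, this reindexing is a bijection of $\prod_{j\neq i}o_j$, so summing over $a_i\in o_i$ reproduces exactly the deletion term. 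Hence the two terms cancel for every $i$, giving $\partial_n S=0$. Since $\pi_\#S=\bigl(\prod_i|o_i|\bigr)(o_1,\dots,o_n)$, we obtain $\pi_\#\circ s=\Id$.

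Finally I would conclude. As $s$ sends the (automatically cyclic) generators of $H_n(\mathcal{O},\Q)$ to cycles and $\pi_\#\circ s=\Id$, the induced map $\pi_*\colon H_n(X,\Q)\to H_n(\mathcal{O},\Q)$ is surjective, whence $\dim_{\Q}H_n(X,\Q)\geqslant m^n$. Because $X$ is finite, the chain groups $C_n(X,\Z)=\Z X^{\times n}$ are finitely generated and free, so by the universal coefficient theorem $\dim_{\Q}H_n(X,\Q)=\rank H_n(X,\Z)=\beta_n(X)$, yielding $\beta_n(X)\geqslant m^n$.

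The one point requiring care---the \emph{main obstacle}---is the cancellation making $s$ a chain map. It rests entirely on the observation that left translations permute each individual orbit, which is exactly where both the cycle set axiom (bijectivity of $L_a$) and the finiteness of $X$ enter; the rest of the argument is formal. I would also double-check the boundary cases $n=0,1$ directly, where the statement reads $\beta_0(X)\geqslant 1$ and $\beta_1(X)\geqslant m$, both already visible from Example~\ref{EX:H1}.
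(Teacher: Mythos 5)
Your proof is correct and follows essentially the same route as the paper: both project onto the trivial cycle set of orbits, note that its homology is free of rank $m^n$, and use chains summed over orbit tuples, whose cycle property rests on exactly the same key observation that left translations permute each individual orbit. The only difference is the final bookkeeping---the paper stays with integral coefficients and deduces linear independence of the classes $[e_{\O_1,\ldots,\O_n}]$ in $H_n(X,\Z)$ from that of their images $|\O_1|\cdots|\O_n|[(\O_1,\ldots,\O_n)]$, whereas you normalize over $\Q$ to get an exact section and invoke the universal coefficient theorem---and both versions are sound.
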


\begin{proof}
Consider the set $\operatorname{Orb}(X)$ of orbits of~$X$, endowed with the trivial cycle set operation $\O \cdot \O' = \O'$. The quotient map $X \twoheadrightarrow \operatorname{Orb}(X)$ is a cycle set morphism, and thus induces a chain complex surjection $C_n(X, \Z) \twoheadrightarrow C_n(\operatorname{Orb}(X), \Z)$ and a map in homology $\phi \colon H_n(X, \Z) \to H_n(\operatorname{Orb}(X), \Z)$. For a trivial cycle set the differentials $\partial_n$ are all zero. So the abelian groups $H_n(\operatorname{Orb}(X), \Z)$ are free, with $m^n$ generators $[(\O_1, \ldots, \O_n)]$, $\O_i \in \operatorname{Orb}(X)$. For such an $n$-tuple of orbits, put $e_{\O_1, \ldots, \O_n} = \sum_{x_i \in \O_i} (x_1, \ldots, x_n)$. The differential $\partial_n$ vanishes on this element of $C_n(X, \Z)$, since all its $n-1$ terms do. One gets a class $[e_{\O_1, \ldots, \O_n}] \in H_n(X, \Z)$, with 
$\phi([e_{\O_1, \ldots, \O_n}]) = |\O_1| \cdots |\O_n|[(\O_1, \ldots, \O_n)]$. 
The linear independence of the $[(\O_1, \ldots, \O_n)]$ now implies that of the $m^n$ elements $[e_{\O_1, \ldots, \O_n}]$ of $H_n(X, \Z)$.
\end{proof}

This proposition and its proof are inspired by the analogous result for racks, due to Carter--Jelsovsky--Kamada--Saito \cite{MR1812049}. Following them, one can extend the proposition to a certain class of infinite cycle sets. But this analogy does not go much further. For instance, for a wide class of shelves including all finite racks one actually has the equality $\beta_n(X) = |\operatorname{Orb}(X)|^n$ \cite{MR1948837,Lebed_Betti}, which fails for many small cycle sets. Indeed, while preparing the paper~\cite{MR1722951}, Etingof, Schedler, and Soloviev computed a complete list of non-degenerate involutive braidings of size $\leqslant 8$. Thanks to Schedler we could access this list and convert it into a readable database for \textsf{Magma}~\cite{MR1484478} and~\textsf{GAP}~\cite{GAP4}. This database (available from the authors immediately on request), and Rump's identification between such braidings and cycle sets in the finite setting, allowed us to write a computer program for calculating the homologies $H_n(X, \Z)$ for small~$n$ and~$X$. The results motivated 

\begin{que}
What information about a cycle set is contained in its Betti numbers?
\end{que}

\section{Applications to multipermutation braided sets}\label{S:Mutliperm}

In this section we will apply the extension techniques developed above for constructing cycle sets with prescribed properties---namely, the multipermutation level.  We will freely use notations from the previous section.

Let us first recall some notions and results from \cite{MR1722951,MR2132760}. 

\begin{defn}\label{D:SquareFree}
A cycle set $(X,\, \cdot)$ is called
\begin{itemize}
\item \emph{non-degenerate} if its squaring map $a \mapsto a \cdot a$ is bijective;
\item \emph{square-free} if it satisfies $a = a \cdot a$ for all $a \in X$.
\end{itemize}
\end{defn} 

Of course, square-free cycle sets are automatically non-degenerate.

\begin{pro}\label{PR:Reduction}
For a non-degenerate cycle set $(X,\, \cdot)$, consider the equivalence relation
$$a \approx a' \quad \Longleftrightarrow \quad a \cdot b = a' \cdot b \; \text{ for all } b \in X.$$
The operation~$\cdot$ then induces a non-degenerate cycle set  structure on the quotient set $\oX = X /  \approx$.
\end{pro}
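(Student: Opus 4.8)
The plan is to show that $\approx$ is a congruence, so that $\cdot$ descends to $\oX$, and then to verify the two cycle-set conditions (the cyclic identity and the bijectivity of left translations) together with non-degeneracy on the quotient. Throughout I write $\theta$ for the squaring map $\theta(a)=a\cdot a$, which is bijective by assumption (Definition~\ref{D:SquareFree}), and I use that each left translation $L_b\colon a\mapsto b\cdot a$ is bijective with inverse $a\mapsto b\ast a$.

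First I would establish that $\cdot$ passes to $\oX$. Compatibility in the first argument is immediate: if $a\approx a'$ then $a\cdot b=a'\cdot b$ for every $b$ by definition, so $\overline{a\cdot b}=\overline{a'\cdot b}$. For the second argument I must show $b\approx b'\Rightarrow a\cdot b\approx a\cdot b'$, i.e. $(a\cdot b)\cdot y=(a\cdot b')\cdot y$ for all $y$. Writing $y=a\cdot z$ (possible since $L_a$ is onto) and applying the cyclic law~\eqref{E:Cyclic} to both sides reduces this to $(b\cdot a)\cdot(b\cdot z)=(b'\cdot a)\cdot(b'\cdot z)$, which holds because $b\approx b'$ forces $b\cdot a=b'\cdot a$ and $b\cdot z=b'\cdot z$. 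Hence $\overline{a}\cdot\overline{b}:=\overline{a\cdot b}$ is well defined and the projection $\pi\colon X\to\oX$ preserves $\cdot$; the cyclic identity for $\oX$ then follows by applying $\pi$ to~\eqref{E:Cyclic}.

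Next I would verify that the induced left translations on $\oX$ are bijective. Surjectivity is clear, since $L_b$ is onto $X$. For injectivity, fix $b$ and suppose $b\cdot a\approx b\cdot a'$. Setting $c=b$ in~\eqref{E:Cyclic} yields the identity $\theta(a\cdot b)=(b\cdot a)\cdot\theta(b)$; applying it to both $a$ and $a'$ and using the hypothesis (evaluated at the point $\theta(b)$) gives $\theta(a\cdot b)=\theta(a'\cdot b)$, whence $a\cdot b=a'\cdot b$ because $\theta$ is injective. Then, evaluating the relation $b\cdot a\approx b\cdot a'$ at the points $b\cdot c$ and rewriting both sides by~\eqref{E:Cyclic}, I obtain $(a\cdot b)\cdot(a\cdot c)=(a'\cdot b)\cdot(a'\cdot c)$ for all $c$; since $a\cdot b=a'\cdot b$, the injective left translation by this common element cancels to give $a\cdot c=a'\cdot c$ for all $c$, i.e. $a\approx a'$. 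Thus $\oX$ is a cycle set, and this is the step where non-degeneracy of $X$ first enters.

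Finally, $\oX$ must be shown non-degenerate, i.e. its squaring map $\overline{\theta}(\overline a)=\overline{a\cdot a}$ bijective. Surjectivity is immediate from surjectivity of $\theta$ on $X$. The injectivity of $\overline\theta$—the implication $a\cdot a\approx a'\cdot a'\Rightarrow a\approx a'$—is the main obstacle: here the quantity controlled by the hypothesis is $L_{a\cdot a}$, a left translation by a \emph{square}, and the cyclic law never produces a square as a left translator, so the elementary cancellation used above does not apply. For finite $X$ this difficulty evaporates, since $\overline\theta$ is then a surjection of a finite set and hence bijective. In general I expect to resolve it by passing to the global structure—equivalently, by showing that the inverse bijection $\theta^{-1}$ descends to $\oX$—using the associated structure group (Definition~\ref{D:StructureSemiGroup}) of $(X,\cdot)$ as in~\cite{MR1722951,MR2132760}, where non-degeneracy is encoded group-theoretically and is manifestly inherited by the retract.
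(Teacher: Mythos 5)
Your proposal is correct and follows essentially the same route as the paper: well-definedness of the quotient operation via surjectivity of left translations together with the cyclic law~\eqref{E:Cyclic}, injectivity of the induced left translations via injectivity of the squaring map followed by cancellation under an injective left translation in~$X$, and deferral of the injectivity of the induced squaring map to Rump~\cite{MR2132760}, noting (as the paper does) that this step is automatic when~$X$ is finite. The paper's own proof proceeds identically and cites the same reference for the one step you leave open, so your proposal is at the same level of completeness.
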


\begin{proof}
To show that the induced operation is well defined, one should prove $a \cdot b \approx a' \cdot b'$ under the assumptions $a \approx a'$, $b \approx b'$. For any $c \in X$, one has
\begin{align*}
(a \cdot b) \cdot (a \cdot c) &\overset{a \approx a'}{=} (a' \cdot b) \cdot (a' \cdot c) = (b \cdot a') \cdot (b \cdot c) \overset{b \approx b'}{=} (b' \cdot a') \cdot (b' \cdot c)\\
&= (a' \cdot b') \cdot (a' \cdot c) \overset{a \approx a'}{=} (a' \cdot b') \cdot (a \cdot c).
\end{align*}
Since every element of~$X$ can be written in the form $a \cdot c$, we are done.

The cycle set property~\eqref{E:Cyclic} for~$\oX$ and the surjectivity of the left translations and of the squaring map follow from the analogous properties for~$X$. Let us now prove that the left translations on~$\oX$ are injective, i.e. that the relation $a \cdot b \approx a \cdot b'$ implies $b \approx b'$. Indeed, for any $c \in X$ one has
\begin{align*}
(b \cdot a) \cdot (b \cdot c) &= (a \cdot b) \cdot (a \cdot c) \overset{a \cdot b \approx a \cdot b'}{=} (a \cdot b') \cdot (a \cdot c) = (b' \cdot a) \cdot (b' \cdot c).
\end{align*}
For $c=a$ this yields, using the injectivity of the squaring map, the equality $b \cdot a = b' \cdot a$. The injectivity of the left translations on~$X$ then extracts from the computation above the desired property $b \cdot c = b' \cdot c$ for all $c \in X$.

The injectivity of the induced squaring map demands more work, and is proved
in~\cite{MR2132760}. Note that it is automatic in two important cases: the
square-free case ($\oX$ is square-free since~$X$ is so), and the finite case
(where surjectivity implies injectivity).
\end{proof}

\begin{defn}
\begin{itemize}
\item The induced structure from the proposition is called the
    \emph{retraction} of $(X,\, \cdot)$, denoted by $\Ret(X,\, \cdot)$.
\item  A non-degenerate cycle set $(X,\, \cdot)$ is called
    \emph{multipermutation (=MP) of level $n\geqslant 0$} if $n$ is the minimal
    number of retractions necessary to turn it into a one-element
    set (in the sense of $|\Ret^n(X,\, \cdot)| = 1$).
\item For an integer $m \geqslant 0$, the number~$N_m$ denotes the minimal size
    of square-free MP cycle sets of level~$m$. 
\end{itemize}
\end{defn} 

\begin{rem}
    The non-degeneracy is essential for the retraction 
    construction to work: Rump~\cite{MR2132760} exhibited an example of a
    degenerate cycle set such that the left translations for the induced
    operation are not injective.
\end{rem}

\begin{example}
The only possibility for a MP cycle set of level~$0$ is a one-element set with
its unique binary operation. Level~$1$ consists of the structures $(X,\, a\cdot
b = \theta(b))$, where $\theta$ is an arbitrary bijection $X
\overset{\sim}{\to} X$ and~$X$ has at least two element; they are naturally
called \emph{permutation cycle sets}. Such a cycle set is square-free if and
only if~$\theta$ is the identity map.  These descriptions imply $N_0 = 1$ and
$N_1 = 2$.
\end{example}

See~\cite{MR1722951} for more examples of and details on MP cycle sets. 

In \cite[Thm. 4]{MR3177933} Ced\'o, Jespers, and
Okni\'nski constructed finite square-free MP solutions of arbitrary 
level. Our extension theory yields a similar result.  

\begin{thm}\label{T:level_m}
Any square-free multipermutation cycle set of level~$m$ and size $N$ admits an
extension of size $2N$ which is square-free and multipermutation of
level~$m+1$.
\end{thm}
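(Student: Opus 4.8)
The plan is to realise the desired extension as an abelian extension by $A = \Z/2\Z$ associated to a carefully chosen $2$-cocycle, and then to show that a single retraction of the extension recovers~$X$. Concretely, I would take the cocycle from Example~\ref{EX:DeltaCocycles} with $A = \Z/2\Z$, $\alpha_1 = 0$, $\alpha_0 = 1$, that is
\[
f(x,y) = \begin{cases} 0 & \text{if } x = y,\\ 1 & \text{if } x \neq y, \end{cases}
\]
which lies in $Z^2(X,\Z/2\Z)$ by that example. By Lemma~\ref{L:ExtFromCocycle} the set $Y = (\Z/2\Z)\times_f X$ is then a cycle set of size $2N$, and by Example~\ref{E:central} it is an (abelian) extension $\extension{Y}{p_X}{X}{\Z/2\Z}$ of~$X$.

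First I would check square-freeness: for $(\alpha,x)\in Y$ one has $(\alpha,x)\cdot(\alpha,x) = (\alpha + f(x,x), x\cdot x) = (\alpha,x)$, since $f(x,x)=0$ and $X$ is square-free. In particular $Y$ is square-free, hence non-degenerate, so that its retraction is defined (Proposition~\ref{PR:Reduction}).

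The key step is to identify $\Ret(Y)$ with~$X$. For $(\alpha,x),(\alpha',x')\in Y$, the relation $(\alpha,x) \approx (\alpha',x')$ means $x\cdot y = x'\cdot y$ and $f(x,y) = f(x',y)$ for all $y\in X$, the $A$-coordinate dropping out of $(\alpha,x)\cdot(\beta,y) = (\beta + f(x,y), x\cdot y)$. The crucial point is that our cocycle \emph{separates points}: taking $y = x$ gives $f(x,x) = 0 \neq 1 = f(x',x)$ whenever $x \neq x'$, so $f(x,\cdot) = f(x',\cdot)$ forces $x = x'$. Hence $(\alpha,x) \approx (\alpha',x')$ if and only if $x = x'$, the assignment $[(\alpha,x)] \mapsto x$ is a bijection $\Ret(Y) \to X$, and the induced operation $[(\alpha,x)]\cdot[(\beta,y)] = [(\beta + f(x,y), x\cdot y)]$ corresponds to $x\cdot y$. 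Thus $\Ret(Y) \cong X$ as cycle sets. This computation of the retraction is where all the content sits; everything else is formal.

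Finally I would conclude on the level. Since $\Ret(Y) \cong X$, a routine induction gives $\Ret^{\,k}(Y) \cong \Ret^{\,k-1}(X)$ for all $k \geq 1$, every iterate being non-degenerate (Proposition~\ref{PR:Reduction}) so that all retractions are defined. As $X$ is multipermutation of level~$m$, one gets $|\Ret^{\,m+1}(Y)| = |\Ret^{\,m}(X)| = 1$, while $|\Ret^{\,m}(Y)| = |\Ret^{\,m-1}(X)| > 1$ for $m \geq 1$ (and $|\Ret^{0}(Y)| = 2N > 1$ when $m = 0$). Hence $Y$ is square-free, multipermutation of level exactly $m+1$, and of size $2N$, as required. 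I do not expect any genuine obstacle: the only delicate choice is the cocycle, which must be made to vanish on the diagonal (for square-freeness) while separating points (so that a single retraction restores~$X$ rather than the smaller set $\Ret(X)$), and the $\delta$-type cocycle above does exactly this.
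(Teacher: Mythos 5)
Your proposal is correct and follows essentially the same route as the paper: the same $\delta$-type cocycle $f$ with values in $\Z/2\Z$ (Example~\ref{EX:DeltaCocycles}), the same extension $\Z/2\Z\times_fX$, and the same key computation showing that $(\alpha,x)\approx(\alpha',x')$ in the extension forces $x=x'$, so that $\Ret(\Z/2\Z\times_fX)\simeq X$. Your extra care in verifying that the level is exactly $m+1$ (via $\Ret^{k}(Y)\simeq\Ret^{k-1}(X)$ and the case $m=0$) is a welcome but minor elaboration of what the paper leaves implicit.
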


This theorem comes with an important corollary:
\begin{cor}\label{C:level_m}
	For any $m \geqslant 0$,
	\begin{enumerate}
	\item there exists a square-free MP cycle set of level~$m$ and size $2^{m}$;
	\item one has $N_{m+1} \leqslant 2N_{m}$.
	\end{enumerate}	
\end{cor}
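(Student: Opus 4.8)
The plan is to construct the required extension explicitly as $A \times_f X$ for $A = \Z/2\Z$ and a suitably chosen $2$-cocycle $f$, and then to show that its retraction is isomorphic to $X$ itself, which forces the level to go up by exactly one. Since $|A \times_f X| = |A|\cdot N = 2N$, the size requirement is automatic; and by Lemma~\ref{L:ExtFromCocycle} together with Example~\ref{E:central}, the resulting object is genuinely an abelian extension of $(X,\,\cdot)$ by $A$ as soon as $f \in Z^2(X,A)$. So the whole problem reduces to picking the right $f$.

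First I would take the ``delta'' cocycle of Example~\ref{EX:DeltaCocycles}, but with the two values arranged so that $f$ vanishes on the diagonal: set $f(x,y) = 0$ if $x = y$ and $f(x,y) = 1$ if $x \neq y$, which is the cocycle with $\alpha_1 = 0 \neq 1 = \alpha_0$. The reason for wanting $f(x,x) = 0$ is square-freeness: in $Y := A \times_f X$ one computes $(\alpha,x)\cdot(\alpha,x) = (\alpha + f(x,x),\, x\cdot x) = (\alpha,x)$, using $x\cdot x = x$ for the square-free $X$. Hence $Y$ is again square-free, in particular non-degenerate, so its retraction $\Ret(Y)$ is well defined (Proposition~\ref{PR:Reduction}).

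The heart of the argument, and the step I expect to require the most care, is the computation of $\Ret(Y)$. Writing out the retraction relation, $(\alpha,x) \approx (\alpha',x')$ means that $(\beta + f(x,y),\, x\cdot y) = (\beta + f(x',y),\, x'\cdot y)$ for all $(\beta,y)\in Y$, i.e. that $x\cdot y = x'\cdot y$ and $f(x,y)=f(x',y)$ for every $y\in X$. The point of the delta cocycle is that the second family of conditions is rigid: evaluating at $y = x$ gives $f(x,x) = f(x',x)$, i.e. $0 = f(x',x)$, which forces $x' = x$ because $f(x',x) = 1$ whenever $x' \neq x$. Thus the equivalence classes of $\approx$ on $Y$ are exactly the fibres $\{(\alpha,x) : \alpha \in A\}$, one for each $x \in X$, and the induced operation sends the class of $x$ and the class of $y$ to the class of $x\cdot y$. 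Therefore $[(\alpha,x)] \mapsto x$ is an isomorphism of cycle sets $\Ret(Y) \cong (X,\,\cdot)$; note that this works for any cycle set structure on $X$, since $f(x,\cdot)$ already determines $x$.

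Finally I would read off the level. From $\Ret(Y) \cong X$ it follows that $\Ret^{k}(Y) \cong \Ret^{k-1}(X)$ for every $k \geqslant 1$, so $|\Ret^{m+1}(Y)| = |\Ret^{m}(X)| = 1$, while $|\Ret^{m}(Y)| = |\Ret^{m-1}(X)| > 1$ for $m \geqslant 1$ (and for $m = 0$ one has $|\Ret^{0}(Y)| = |Y| = 2 > 1$ directly). Hence $Y$ is square-free, multipermutation of level exactly $m+1$, and of size $2N$, as required. The only genuinely delicate point is the choice of cocycle: it must vanish on the diagonal to preserve square-freeness, yet separate points in its first variable so that the retraction reconstructs $X$ rather than collapsing further, and the delta cocycle of Example~\ref{EX:DeltaCocycles} is tailored to satisfy both constraints simultaneously.
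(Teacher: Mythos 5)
Your construction is exactly the paper's: the corollary is deduced there from Theorem~\ref{T:level_m}, whose proof uses the same delta cocycle of Example~\ref{EX:DeltaCocycles} (normalized to vanish on the diagonal), forms $\Z/2\Z\times_f X$, checks square-freeness, and shows that $(\alpha,x)\mapsto x$ induces an isomorphism $\Ret(\Z/2\Z\times_f X)\overset{\sim}{\to} X$ by evaluating at $y=x$, precisely as you do; your verification that the level is \emph{exactly} $m+1$ (ruling out a drop) is even slightly more explicit than the paper's. The only omission is the short deduction of the two stated items from this construction: item (1) needs an induction starting from the one-element cycle set (level $0$, size $2^0$), and item (2) needs the construction applied to a square-free MP cycle set of level $m$ of minimal size $N_m$; both are immediate, but as written your argument establishes the inductive step (i.e., Theorem~\ref{T:level_m}) rather than the corollary's statements themselves.
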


Estimation $N_{m+1} \leqslant 2N_{m}+1$ was obtained earlier by Cameron and Gateva-Ivanova \cite{MR2885602}.

\begin{proof}[Proof of Theorem~\ref{T:level_m}]
Given a square-free MP cycle set $(X,\, \cdot)$ of level~$m$ and size~$N$, consider its $2$-cocycle
\begin{align*}
f \colon X \times X &\to \Z/2\Z, &f(x,y)& = \begin{cases} 0 & \text{if } x=y, \\ 1 & \text{if } x \neq y, \end{cases}
\end{align*} 
from Example~\ref{EX:DeltaCocycles}. Form the extension $\extension{\Z/2\Z\times_fX}{p_X}{X}{\Z/2\Z}$ (Example~\ref{E:central}). Explicitly, the cycle set operation on $\Z/2\Z\times_fX$ reads 
\begin{align*}
(\alpha,x)\cdot (\beta,x) &= (\beta+f(x,x),x\cdot x) = (\beta,x),\\
(\alpha,x)\cdot (\beta,y) &= (\beta+f(x,y),x\cdot y) = (\beta+1,x\cdot y) \qquad \text{ for } x \neq y.
\end{align*}
This clearly defines a square-free cycle set $(\Z/2\Z\times_fX,\, \cdot)$ of size~$2N$. We will now show that the map $(\alpha,x) \mapsto x$ induces a cycle set isomorphism 
$$\Ret(\Z/2\Z\times_fX,\, \cdot) \overset{\sim}{\to} (X,\, \cdot),$$ 
which implies that the cycle set $(\Z/2\Z\times_fX,\, \cdot)$ is MP of level~$m+1$. 

Concretely, we have to prove that the equality $(\alpha,x)\cdot (\beta,y)=(\alpha',x')\cdot (\beta,y)$ for all $(\beta,y) \in \Z/2\Z\times_fX$ is equivalent to $x = x'$. Indeed, for distinct $x$ and~$x'$ one has $(\alpha,x)\cdot (\beta,x) = (\beta,x)$ while $(\alpha',x')\cdot (\beta,x) = (\beta+1,x' \cdot x)$, and for $x = x'$ one compares $(\alpha,x)\cdot (\beta,y) = (\beta+f(x,y),x\cdot y) = (\alpha',x)\cdot (\beta,y)$.
\end{proof}

In fact, the retraction-extension 
interplay in our proof is more than a mere coincidence. Below is a more conceptual example of a connexion between the two constructions:   

\begin{pro}
    Let $(X,\, \cdot)$ be a non-degenerate cycle set. Then the natural projection $(X,\, \cdot) \twoheadrightarrow \Ret(X,\, \cdot)$ from~$X$ to its retraction
    factors through any extension $\extension{X}{p}{Y}{A}$ (Definition~\ref{D:Ext}) with the total cycle set~$X$.
\end{pro}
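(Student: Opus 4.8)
The plan is to construct the factoring morphism by hand and then check it is compatible with the cycle set structure. Write $\pi \colon X \twoheadrightarrow \Ret(X)$ for the natural projection, sending each $a \in X$ to its class for the relation $\approx$ of Proposition~\ref{PR:Reduction}, where $a \approx a'$ means $a \cdot b = a' \cdot b$ for all $b \in X$; this retraction exists because $X$ is non-degenerate. Saying that $\pi$ factors through $p \colon X \to Y$ amounts to producing a cycle set morphism $\bar{p} \colon Y \to \Ret(X)$ with $\bar{p} \circ p = \pi$. Since $p$ is surjective, such a $\bar{p}$ is forced to be $\bar{p}(p(a)) = \pi(a)$, so the entire content of the statement is that $\pi$ is constant on the fibres of $p$.

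First I would verify this fibrewise constancy, which is where the extension axioms do all the work. Taking $a, a' \in X$ with $p(a) = p(a')$, the two elements lie in one fibre $p^{-1}(y)$, so by the regularity of the $A$-action on fibres (condition (1) of Definition~\ref{D:Ext}) there is an $\alpha \in A$ with $a' = \alpha a$. Condition (2) of the same definition then gives, for every $b \in X$,
\begin{align*}
a' \cdot b = (\alpha a) \cdot b = a \cdot b,
\end{align*}
whence $a \approx a'$, i.e. $\pi(a) = \pi(a')$. Thus $\pi$ is constant on each $p$-fibre and $\bar{p}$ is a well-defined set map satisfying $\bar{p} \circ p = \pi$.

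It then remains to confirm that $\bar{p}$ is a morphism of cycle sets. Writing arbitrary elements of $Y$ as $y_i = p(a_i)$ and using that both $p$ and $\pi$ are cycle set homomorphisms, one computes $\bar{p}(y_1 \cdot y_2) = \pi(a_1 \cdot a_2) = \pi(a_1) \cdot \pi(a_2) = \bar{p}(y_1) \cdot \bar{p}(y_2)$, as desired.

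I do not expect any genuine obstacle here. The only point worth flagging is conceptual rather than technical: condition (2) of Definition~\ref{D:Ext} asserts exactly that $a \cdot b$ is unchanged when $a$ is replaced by $\alpha a$, which is precisely the defining relation $\approx$ of the retraction. Once this identification is made, the result is an immediate diagram chase, requiring neither non-degeneracy of~$Y$ nor any computation beyond the one line above.
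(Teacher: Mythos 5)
Your proof is correct and takes essentially the same route as the paper: both reduce the statement to showing that the projection onto $\Ret(X,\,\cdot)$ is constant on the fibres of~$p$, and then obtain this by using regularity (condition (1) of Definition~\ref{D:Ext}) to write $a' = \alpha a$ and condition (2) to conclude $(\alpha a)\cdot b = a \cdot b$, i.e. $a \approx a'$. The only difference is that you make explicit the routine verification that the induced map $Y \to \Ret(X,\,\cdot)$ is a cycle set morphism, which the paper leaves implicit.
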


\begin{proof}
It suffices to check that any $x_1,x_2$ from the same fiber $p^{-1}(y)$ have
identical left translations, i.e., satisfy $x_1 \approx x_2$. Indeed, by the
definition of an extension, for any $x_1,x_2 \in p^{-1}(y)$ there exists a
$\alpha \in A$ with $x_1 = \alpha x_2$. But then one has $x_1 \cdot x = (\alpha
x_2) \cdot x = x_2 \cdot x$, which was to be proved.
\end{proof}

In Rump's identification between finite cycle sets and finite non-degenerate involutive braidings~\cite{MR2132760}, square-freeness is equivalent to the diagonal preservation property $\sigma(x,x) =(x,x)$. Thus an inspection of the non-degenerate involutive braiding  
list from~\cite{MR1722951} allows one to compute the first values of the sequence~$N_m$. The results are
given in Table~\ref{tab:N_m}. This computation answers in the negative  \cite[Open Question 6.13I(3)]{MR2885602}, which asks if the
relation $N_m = 2^{m-1}+1$, valid for the first values of~$m$, in fact holds
for all~$m$.  It also shows that our estimations $N_{m+1} \leqslant 2N_{m}$ are
not optimal.
\begin{table}[h]
	\caption{Some values of $N_m$.}
	\centering
	\begin{tabular}{|c|c|c|c|c|c|c|}
		\hline
		$m$ & $0$ & $1$ & $2$ & $3$ & $4$ & $5$\\
		\hline
		$N_m$ & $1$ & $2$ & $3$ & $5$ & $6$ & $8$\\
		\hline
	\end{tabular}
	\label{tab:N_m}
\end{table}

\section{Relation with group cohomology}

This section proposes an interpretation of the second cohomology group
$H^2(X,A)$ of a left non-degenerate (=LND) braided set $(X,\, \sigma)$ (in the
sense of Remark~\ref{R:LNDHom}, the star version) in terms of group cohomology.
This generalizes the analogous result for racks established by Etingof and
Gra{\~n}a~\cite{MR1948837}. For our second favourite example---that of cycle
sets---this can be helpful in studying extensions, in the light of
Theorem~\ref{T:Ext=H2}.

Recall that the \emph{group cohomology} $H^\ast(G,M)$ of a group~$G$ with coefficients in a right $G$-module~$M$ is the cohomology of the complex $(C^n(G,M),\partial_{\Group}^n)$ with $C^n(G,M) = \Fun(G^n,M)$ and
\begin{align*}
\partial_{\Group}^n f (g_1,\ldots,g_{n+1}) =& f(g_2,\ldots,g_{n+1}) + (-1)^{n+1} f(g_1,\ldots,g_{n})g_{n+1}\\
 &+ \sum\nolimits_{i=1}^n (-1)^{i} f(g_1,\ldots, g_{i-1}, g_i g_{i+1}, g_{i+2}, \ldots,  g_{n+1}) 
\end{align*}
(here the multiplication and action symbols are omitted for simplicity). The group we are interested in here is the structure group $G_{(X, \sigma)}$ (Definition~\ref{D:StructureSemiGroup}). Using Notation~\ref{N:sideways}, it can be defined as the free group on the set~$X$ modulo the relation $(a\cdot b) a = (b \wdot a)b$. The $G$-module we will consider comes from

\begin{lem}	\label{L:FunAsMod}
	Let $(X,\, \sigma)$ be a LND braided set, and~$A$ an abelian group. The map
	\begin{align*}
	\Fun(X,A) \times X &\to \Fun(X,A),\\
	(\gamma, a) &\mapsto (\gamma \cdot a \colon b \mapsto \gamma(a \wdot b))
	\end{align*}
	extends to a $G_{(X, \sigma)}$-module structure on the abelian group $\Fun(X,A)$.
\end{lem}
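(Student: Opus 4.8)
The plan is to build the action on the generators $a \in X$, observe that each generator acts invertibly, extend it freely to the free group on~$X$, and then check that the single defining relation of $G_{(X,\sigma)}$ is respected. First I would set, for each $a \in X$, the self-map $\ell_a \colon X \to X$, $b \mapsto a \wdot b = b^{\tilde a}$. By Notation~\ref{N:sideways} this $\ell_a$ is exactly the inverse of $x \mapsto x^a$, which is a bijection of~$X$ because $(X,\sigma)$ is left non-degenerate; hence $\ell_a \in \Sym(X)$. Consequently the prescribed operation $\gamma \cdot a = \gamma \circ \ell_a$ is an automorphism of the abelian group $\Fun(X,A)$, precomposition by a bijection being additive and invertible.

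Next I would invoke the universal property of the free group $F(X)$ on~$X$: the assignment $a \mapsto \ell_a$ extends uniquely to a group homomorphism $\Phi \colon F(X) \to \Sym(X)$, and the formula $\gamma \cdot g := \gamma \circ \Phi(g)$ then defines a genuine right action of $F(X)$ on $\Fun(X,A)$ by automorphisms, since associativity of composition yields $\gamma \cdot (gh) = (\gamma \cdot g)\cdot h$ and $\gamma \cdot e = \gamma$. On a single generator this recovers the stated formula $(\gamma \cdot a)(b) = \gamma(a \wdot b)$. Because $\Phi$ is a homomorphism, this action descends to the quotient $G_{(X,\sigma)}$ (Definition~\ref{D:StructureSemiGroup}) precisely when $\Phi$ sends both sides of the defining relation $(a\cdot b)\,a = (b\wdot a)\,b$ to the same permutation of~$X$.

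The crux of the argument is therefore this last verification, and it reduces to a known identity. Spelling out $\Phi((a\cdot b)a) = \ell_{a\cdot b}\circ\ell_a$ and $\Phi((b\wdot a)b) = \ell_{b\wdot a}\circ\ell_b$, and evaluating at an arbitrary $c \in X$, the required equality becomes
\[
(a\cdot b)\wdot(a\wdot c) = (b\wdot a)\wdot(b\wdot c),
\]
which is exactly the second of the three identities equivalent to the Yang--Baxter equation recorded in Remark~\ref{R:LNDHom}. Since $(X,\sigma)$ satisfies the YBE, this holds, so $\Phi$ factors through $G_{(X,\sigma)}$ and the $F(X)$-action descends to the desired right $G_{(X,\sigma)}$-module structure on $\Fun(X,A)$. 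The only point demanding care is the bookkeeping of the direction of the action, so that the composition order of the $\ell_a$ matches the word order in the relation; the genuinely nontrivial input, the displayed identity, is supplied wholesale by Remark~\ref{R:LNDHom}.
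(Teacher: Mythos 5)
Your proof is correct and follows essentially the same route as the paper's: the paper notes that $\lambda(a,b)=a\wdot b$ makes~$X$ a left braided module over itself (Remark~\ref{R:AdjActionBis}), uses left non-degeneracy to make this module solid so that Lemma~\ref{L:StructureSemiGroup} promotes it to a $G_{(X,\sigma)}$-action on~$X$, and then dualizes to $\Fun(X,A)$. Your free-group argument simply unfolds the content of Lemma~\ref{L:StructureSemiGroup} in this particular case, and the identity you invoke from Remark~\ref{R:LNDHom} is precisely the left braided-module axiom for~$\wdot$ (after exchanging the roles of $a$ and~$b$), so both proofs rest on the same two facts: bijectivity of $b\mapsto a\wdot b$ and a single compatibility identity derived from the Yang--Baxter equation.
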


\begin{proof}
	The map $\lambda(a,b) = a \wdot b$ defines a left $(X,\, \sigma)$-module
	structure on itself (Remark~\ref{R:AdjActionBis}). By the left
	non-degeneracy, all the maps $b \mapsto a \wdot b$ are bijective,
	hence~$\lambda$ extends to a unique $G_{(X, \sigma)}$-module structure
	on~$X$ (Lemma~\ref{L:StructureSemiGroup}). This induces a right $G_{(X,
	\sigma)}$-module structure on $\Fun(X,A)$, which satisfies the desired
	property.
\end{proof}

The group cohomology with these choices turns out to be useful in studying the
cohomology of our braided set:

\begin{thm}\label{T:GroupCohom}
	Let $(X,\, \sigma)$ be a left non-degenerate braided set, and~$A$ an
	abelian group. Then one has the following abelian group isomorphism:
	\[
	H^2(X,A)\simeq H^1(G_{(X, \sigma)},\Fun(X,A)),
	\]
	where on the left $H^2$ stands for the cohomology theory from
	Remark~\ref{R:LNDHom} (the star version), and on the right group cohomology
	is used (the module structure on $\Fun(X,A)$ is described above).
\end{thm}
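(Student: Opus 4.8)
The plan is to identify, essentially on the nose, the low-degree part of the cochain complex computing the star cohomology $H^{\ast}(X,A)$ with that computing $H^{\ast}(G_{(X,\sigma)},\Fun(X,A))$, and to read off the asserted isomorphism. Recall from Definition~\ref{D:StructureSemiGroup} and Notation~\ref{N:sideways} that $G:=G_{(X,\sigma)}$ is generated by $X$ subject to the relations $(a\cdot b)\,a=(b\wdot a)\,b$, and that by Lemma~\ref{L:FunAsMod} the abelian group $\Fun(X,A)$ is a right $G$-module via $(\gamma\cdot a)(b)=\gamma(a\wdot b)$ on generators. A class in $H^1(G,\Fun(X,A))$ is represented by a crossed homomorphism (derivation) $\phi\colon G\to\Fun(X,A)$, i.e.\ a map with $\phi(gh)=\phi(g)\cdot h+\phi(h)$, two such being cohomologous exactly when they differ by a principal derivation $g\mapsto \mu-\mu\cdot g$, $\mu\in\Fun(X,A)$.

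First I would set up the correspondence on cochains. To a map $f\colon X\times X\to A$ I associate the map on generators $a\mapsto\phi_f(a):=f(a,-)\in\Fun(X,A)$. Since $G$ is presented by the generators $X$ and the relations above, the standard dictionary between derivations and maps on generators shows that $\phi_f$ extends (uniquely) to a derivation $G\to\Fun(X,A)$ if and only if the relators are respected, i.e.\ $\phi_f(a\cdot b)\cdot a+\phi_f(a)=\phi_f(b\wdot a)\cdot b+\phi_f(b)$ for all $a,b\in X$. Evaluating both sides at an arbitrary $c\in X$ and unwinding the module action via $(\gamma\cdot a)(c)=\gamma(a\wdot c)$, this relator condition becomes
\begin{align*}
f(a,c)+f(a\cdot b,\,a\wdot c) &= f(b,c)+f(b\wdot a,\,b\wdot c), \qquad a,b,c\in X.
\end{align*}
The key observation---and the technical heart of the proof---is that this is precisely the star $2$-cocycle condition of Remark~\ref{R:LNDHom}: that condition reads $f(x,z)+f(x\wdot y,x\wdot z)=f(y,z)+f(y\cdot x,y\wdot z)$, and, since it is universally quantified over $x,y,z$, simply interchanging the names of $x$ and $y$ turns it into the displayed identity with $(x,y,z)=(a,b,c)$. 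Hence $f\mapsto\phi_f$ restricts to a bijection between the star $2$-cocycles $Z^2(X,A)$ and the derivations $Z^1(G,\Fun(X,A))$, with inverse $\phi\mapsto\bigl((a,b)\mapsto\phi(a)(b)\bigr)$.

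It then remains to match the boundaries. Computing $\partial^0$ on the group side gives, for $\mu\in\Fun(X,A)$, the principal derivation $a\mapsto\mu-\mu\cdot a$, whose value at $(a,c)$ is $\mu(c)-\mu(a\wdot c)$; on the cycle-set side the star coboundary of a $1$-cochain $g\in\Fun(X,A)=C^1(X,A)$ is $(x,y)\mapsto g(x\wdot y)-g(y)$, read off from $d^\star_1$ and $d'_1$. These two families coincide as subgroups of $\Fun(X\times X,A)$, so $f\mapsto\phi_f$ carries $B^2(X,A)$ onto $B^1(G,\Fun(X,A))$. Passing to quotients yields $H^2(X,A)\simeq H^1(G_{(X,\sigma)},\Fun(X,A))$. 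The main obstacle is exactly the key observation: establishing that the derivation-extension constraint coming from the single family of structure-group relations is literally the star $2$-cocycle condition. This hinges on using the correct ($\wdot$-)module of Lemma~\ref{L:FunAsMod}---whose consistency with the relations is precisely the Yang--Baxter identity $(a\cdot b)\wdot(a\wdot c)=(b\wdot a)\wdot(b\wdot c)$---and on the innocuous-looking but essential relabeling of the quantified variables; the well-definedness of the extension to all of $G$ (including inverses) is then the usual presentation-theoretic argument via Lemma~\ref{L:StructureSemiGroup}.
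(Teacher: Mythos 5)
Your proposal is correct and follows essentially the same route as the paper: both identify star $2$-cocycles with derivations (group $1$-cocycles) of $G_{(X,\sigma)}$ via $f \leftrightarrow (a \mapsto f(a,-))$, using the presentation of $G_{(X,\sigma)}$ by generators $X$ and relations $(a\cdot b)\,a=(b\wdot a)\,b$ together with the semidirect-product characterization of derivations, and both match star $2$-coboundaries with principal derivations. The only (cosmetic) difference is that you extract both directions of the cocycle bijection from the single relator-respecting criterion, whereas the paper verifies one direction by a direct computation with the $1$-cocycle identity and the other via the morphism $X \to G_{(X,\sigma)}\ltimes\Fun(X,A)$.
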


\begin{proof}
	Consider two maps 
	\begin{align*}
	\omega\colon Z^1(G_{(X, \sigma)},\Fun(X,A)) &\to Z^2(X,A),\\
	\theta &\mapsto ((x,y) \mapsto \theta(x)(y));\\
	\nu\colon Z^1(G_{(X, \sigma)},\Fun(X,A)) &\leftarrow Z^2(X,A),\\
	(x \mapsto f(x,-)) &\mapsfrom f.
	\end{align*}
We have to show that
\begin{enumerate}
\item for a $1$-cocycle~$\theta$, $\omega(\theta)$ is indeed a $2$-cocycle;
\item the map~$\nu$ is well defined;
\item $\omega$ sends $1$-coboundaries to $2$-coboundaries;
\item $\nu$ sends $2$-coboundaries to $1$-coboundaries.
\end{enumerate}
Since~$\omega$ and~$\nu$ are clearly mutually inverse, this will imply that they induce isomorphism in cohomology.

\begin{enumerate}
\item Let~$\theta$ be a map in $Z^1(G_{(X, \sigma)},\Fun(X,A))$. It means that it satisfies the relation 
\begin{align*}
\theta(g_1g_2) &= \theta(g_2) + \theta(g_1)\cdot g_2, & g_1,g_2 \in G_{(X, \sigma)}.
\end{align*} 
For $f = \omega(\theta)$, one needs to check the relation
$$f(x,z)+f(x\wdot y,x\wdot z)=f(y,z)+f(y\cdot x,y\wdot z)$$
for all $x,y,z \in X$, which rewrites as
$$\theta(x)(z)+\theta(x\wdot y)(x\wdot z)=\theta(y)(z)+\theta(y\cdot x)(y\wdot z).$$
Recalling the definition of the $G_{(X, \sigma)}$-action on $\Fun(X,A)$, one transforms this into
$$\theta(x)(z)+(\theta(x\wdot y) \cdot x)(z)=\theta(y)(z)+(\theta(y\cdot x)\cdot y)(z).$$
The $1$-cocycle property for~$\theta$ simplifies it to
$$\theta((x\wdot y) x)= \theta((y\cdot x) y),$$
which follows from the relation $(x\wdot y) x = (y\cdot x) y$ valid in the structure group $G_{(X, \sigma)}$.
\item Recall that a map $\theta\colon G_{(X, \sigma)}\to\Fun(X,A)$ is a $1$-cocycle if and only if the map
	$G_{(X, \sigma)}\to G_{(X, \sigma)}\ltimes\Fun(X,A)$ given by $g\mapsto (g,\theta(g))$ is a group
	morphism; here $G_{(X, \sigma)}\ltimes\Fun(X,A)$ is the set  $G_{(X, \sigma)}\times\Fun(X,A)$ endowed with the group multiplication $(g,\gamma)(g',\gamma')=(gg',\gamma'+\gamma\cdot g')$. The verification of this well-known property is elementary. We will now show that, for $f \in Z^2(X,A)$, the assignment $\iota_f \colon X \to G_{(X, \sigma)}\ltimes\Fun(X,A)$, $x \mapsto (x,f(x,-))$, extends to a unique group morphism $G_{(X, \sigma)} \to G_{(X, \sigma)}\ltimes\Fun(X,A)$. For this it suffices to check the property $\iota_f(x\wdot y) \iota_f(x) = \iota_f(y\cdot x) \iota_f(y)$ for all $x,y \in X$. Explicitly, it reads
$$(x\wdot y,f(x\wdot y,-))(x,f(x,-))=(y\cdot x,f(y\cdot x,-))(y,f(y,-)),$$
which simplifies as	
\begin{align*}
&((x\wdot y)x,f(x\wdot y,-)\cdot x + f(x,-))=\\
&((y\cdot x)y,f(y\cdot x,-)\cdot y+f(y,-)).
\end{align*}
Since the relation $(x\wdot y) x = (y\cdot x) y$ always holds in $G_{(X, \sigma)}$, it remains to show the equation
$$ f(x\wdot y, x \wdot -) + f(x,-)= f(y\cdot x,y \wdot -)+f(y,-),$$
which is precisely the definition of a $2$-cocycle.
\item A $1$-coboundary in $C^1(G_{(X, \sigma)},\Fun(X,A))$ is a map of the form 
$$\partial_{\Group}^0 \gamma \colon g \mapsto \gamma - \gamma \cdot g \quad \text{ for some } \gamma \in \Fun(X,A).$$ 
Its image $\omega(\partial_{\Group}^0 \gamma)$ is then the map sending $(x,y)$ to
$$(\gamma - \gamma \cdot x)(y) = \gamma(y) - \gamma(x \wdot y) = - \partial^1 \gamma(x,y),$$
yielding $\omega(\partial_{\Group}^0 \gamma) = \partial^1 (-\gamma)$.
\item A $2$-coboundary in $C^2(X,A)$ is a map of the form $\partial^1 \gamma$ for some $\gamma \in \Fun(X,A)$. Since~$\omega$ and~$\nu$ are mutually inverse, the computation above implies the relation $\nu(\partial^1 \gamma) = \partial_{\Group}^0 (-\gamma)$.
\qedhere
\end{enumerate}
\end{proof}

One could wonder if a similar result holds true for higher cohomology groups. A first step in this direction is the identification
	\[
	H^n(X,A)\simeq H_{tr}^{n-1}(X,\Fun(X,A)),
	\]
which follows from the obvious isomorphism of cochain complexes.	Here $H^n$ stands for the cohomology theory from Remark~\ref{R:LNDHom} (the star version), and $H_{tr}^{n-1}$ is the cohomology of $(X,\, \sigma)$	with trivial coefficients, acting on $\Fun(X,A)$ on the right as in Lemma~\ref{L:FunAsMod}. This latter cohomology, described for instance in~\cite{LebedIdempot}, mimics group cohomology. Our identification generalizes Etingof and Gra{\~n}a's result for racks~\cite[Proposition 5.1]{MR1948837}. Note that they used $H_{tr}^n(X,A)$ instead of our $H^n(X,A)$; the two cohomology theories coincide for racks, but differ for more general braided sets. Unfortunately, precise relations between $H_{tr}^{n-1}(X,\Fun(X,A))$ and the group cohomology $H^{n-1}(G_{(X, \sigma)},\Fun(X,A))$ are as for now very poorly understood. To our knowledge, the only connection between the two is the quantum symmetrizer map, known to be an isomorphism in some very particular cases \cite{FarinatiGalofre,LebedIdempot}.

\bibliographystyle{abbrv}
\bibliography{refs}
 
\end{document}